\providecommand{\U}[1]{\protect\rule{.1in}{.1in}}
\providecommand{\U}[1]{\protect \rule{.1in}{.1in}}
\newtheorem{theorem}{Theorem}[section]
\newtheorem{definition}[theorem]{Definition}
\newtheorem{example}[theorem]{Example}
\newtheorem{lemma}[theorem]{Lemma}
\newtheorem{remark}[theorem]{Remark}
\newenvironment{proof}[1][Proof]{\noindent \textbf{#1.} }{\  \rule{0.5em}{0.5em}}
\begin{document}

\title{Ergodic BSDEs driven by $G$-Brownian motion and  applications}
\author{Mingshang Hu \thanks{Zhongtai Securities Institute of Finance, Shandong University,
humingshang@sdu.edu.cn. Research supported by  NSF (No.11671231, 11201262 and 11301068) and Shandong Provincial NSF (No.BS2013SF020 and ZR2014AP005)}
\and Falei Wang\thanks{Zhongtai Securities Institute of Finance and Institute for Advanced Research, Shandong University,
flwang2011@gmail.com. Research supported by  the NSF (No.11601282), Shandong Provincial NSF (No.ZR2016AQ10) and Fundamental Research Funds of Shandong University (No.2015GN023).
Hu and Wang's research was
partially supported by NSF (No.10921101) and by the 111
Project (No.B12023)}}
\date{}

\maketitle

\begin{abstract}
The present paper considers a new kind of backward stochastic differential equations (BSDEs) driven by
$G$-Brownian motion, which is called ergodic $G$-BSDEs. Firstly, the well-posedness of $G$-BSDEs with infinite horizon is  given
by a new linearization method. Then, the Feynman-Kac formula for fully nonlinear  elliptic partial differential equations (PDEs) is established.
Moreover,  a new probabilistic approach is introduced  to prove the uniqueness of viscosity solution to elliptic PDEs in the whole space.
Finally, we obtain the existence  of solution to $G$-EBSDE and  some  applications are also stated.
\end{abstract}

\textbf{Key words}: $G$-Brownian motion, ergodic $G$-BSDEs, ergodic elliptic PDEs

\textbf{MSC-classification}: 60H10, 60H30
\section{Introduction}
In 1990, Pardoux and Peng \cite{PP90}   established the existence and uniqueness theorem
for  nonlinear BSDEs, which  generalize the linear ones of
Bismut \cite{Bismut}.  After that, the researchers made great progress in this field.
In particular, the BSDEs theory provides a powerful tool for the study of mathematical
finance (see \cite{CHMP,EPQ}), stochastic control (see \cite{Peng1993}) and PDEs  (see \cite{PE, PP2}).

It is well known that  BSDEs with a deterministic terminal time provide a probabilistic
representation for solutions to quasi-linear parabolic PDEs, whereas the
BSDEs with a random terminal time are connected with quasi-linear elliptic PDEs (see \cite{BH,FH1,P1991,RM}). The BSDEs with infinite horizon
can be seen as a special case of BSDEs with a random terminal time. Based on it,
 Fuhrman, Hu and Tessitore \cite{FH} (see also \cite{CFa,CH,DH,R} for more details) introduced the following Markovian ergodic BSDE (EBSDE):
\begin{align*}
Y_{s}^{x}   =Y^x_T+\int_{s}^{T}[f(X_{r}^{x},Z_{r}^{x})-\lambda]dr -\int_{s}^{T}Z_{r}^{x}dW_{r},
\end{align*}
where $(W_r)_{r\geq 0}$ is a cylindrical Wiener process in a Hilbert space and $X^x$ is the solution to a
forward stochastic differential equation starting at $x$ and taking values in a Banach space.  In this equation,
the constant $\lambda$ is the ``ergodic cost'', which provides an efficient alternative tool  for the study of optimal control
problems with ergodic cost functionals (see also \cite{AL, BF}).
Moreover, by virtue of a EBSDE approach, Hu, Madec and Richou \cite{H} (see also \cite{HM}) studied the large time asymptotics of mild solutions to semi-linear PDEs under the so called weak dissipative assumptions  (in infinite dimension). In particular, they  also  gave an explicit  rate of convergence.
Based on the randomization approach introduced by \cite{KP}, Cosso, Fuhrman and Pham \cite{CF} obtained the long-time behavior of solutions to fully nonlinear HJB equations under dissipativity  conditions, where the diffusion term may be degenerate.

Recently, Peng introduced a sublinear expectation--$G$-expectation
theory, which non-trivially generalizes the classical  case  (see \cite {P10, PengICM2010} and the references therein).
Under the $G$-expectation framework, the $G$-Brownian motion and the corresponding
stochastic calculus of It\^{o}'s type are also established. Moreover, the existence and uniqueness theorem
of  $G$-BSDEs and nonlinear Feynman-Kac formula for fully nonlinear  PDEs are also  obtained in  \cite{HJPS,HJPS1}(see \cite{HL} for further research).
In a different setting,  Soner, Touzi and Zhang \cite{STZ} established the the so-called 2BSDEs theory, which shares many similarities with $G$-BSDEs.
For more research on this topic, we refer the reader to \cite{PX} and the references therein.

The present paper is devoted to studying the following type of Markovian BSDE driven by $G$-Brownian motion with infinite horizon, which is  called $G$-EBSDE:
for all $0\leq s\leq T<\infty$,
\begin{align}\label{HM12}
Y_{s}^{x}   =Y^x_T+\int_{s}^{T}[f(X_{r}^{x},Z_{r}^{x})+\gamma^1\lambda]dr+\int_{s}^{T}[g_{ij}(X_{r}^{x}%
,Z_{r}^{x})+\gamma^2_{ij}\lambda]d\langle B^i,B^j\rangle_{r} -\int_{s}^{T}Z_{r}^{x}dB_{r}-(K_{T}^{x}-K_{s}^{x}),
\end{align}
where  $\gamma^1$ is a fixed constant and  $\gamma^2 $ is a given $d\times d$ symmetric matrix  satisfying $\gamma^1+2G(\gamma^2)<0$, $(B_t)_{t\geq 0}$ is a  $d$-dimensional  $G$-Brownian motion and $X^x$ is the solution to a stochastic differential equation driven by $G$-Brownian motion starting at $x$.
Our aim is to find a quadruple $(Y, Z, K, \lambda)$ satisfying $G$-EBSDEs \eqref{HM12}, where $Y,Z$ are integrable processes in the $G$-expectation space,  $K$ is a decreasing $G$-martingale and
$\lambda$ is a real number.

For this purpose,  we firstly introduce a new kind of  linearization method to show
that the BSDE driven by $G$-Brownian motion with infinite horizon has a unique solution under some certain  conditions. Note that the  linearization methods in  \cite{BH} and \cite{HJPS1} cannot be  applied directly to deal with this problem due to the structure of $G$-expectation space.
In addition,  the comparison theorem for $G$-BSDE with infinite horizon is also obtained.
Then, we establish the fully nonlinear Feynman-Kac formula for elliptic PDEs and introduce a new probabilistic method to tackle the uniqueness of viscosity solution to elliptic PDEs in $\mathbb{R}^n$, which improves the one in \cite{PE}. Finally, we  prove that the $G$-EBSDE \eqref{HM12} has a solution $(Y^x, Z^x, K^x, \lambda)$.
The $G$-EBSDE \eqref{HM12}
provides an alternative approach for  the study of  the following  ergodic elliptic PDEs:
\begin{align*}
&G(H(D_{x}^{2}{v},D_{x}{v},\lambda,x))+\langle
b(x),D_{x}{v}\rangle+f(x,D_{x}{v}\sigma(x))+\gamma^1\lambda=0,
\end{align*}%
which is a  completely new  fully nonlinear PDE. Moreover, with the help of $G$-EBSDEs theory, we could study  the large time behaviour of solutions to fully nonlinear PDE
and optimal ergodic control problems under model uncertainty. Indeed, $G$-EBSDEs theory provides a potential method to study ergodic problems in the nonlinear expectation framework, see \cite{HL1}.

The paper is organized as follows. In  section 2, we present some preliminaries
for $G$-BSDEs. The  existence and uniqueness theorem for  $G$-BSDEs with infinite horizon is established in section 3.
In section 4, we obtain the fully nonlinear Feynman-Kac formula for   elliptic PDEs. Section 5 is devoted
to the study of $G$-EBSDEs and  some applications are stated in section 6.

\section{Preliminaries}

The main purpose of this section is to recall some basic notions and results of $G$-expectation theory, which are needed in the sequel. The readers may refer to \cite{HJPS},
\cite{P07a}, \cite{P08a} and \cite{P10} for more details.

Let $\Omega=C_{0}([0,\infty);\mathbb{R}^{d})$ be the space of all
$\mathbb{R}^{d}$-valued continuous functions on $[0,\infty)$ starting from the origin,  endowed
with the distance
$$
\rho(\omega^1, \omega^2):=\sum^\infty_{N=1} 2^{-N} [(\max_
{t\in[0,N]} | \omega^1_t-\omega^2_t|)  \wedge 1],
$$
and $B$ be the canonical
process. For each $T>0$, denote
\[
L_{ip} (\Omega_T):=\{ \varphi(B_{t_{1}},...,B_{t_{n}}):n\geq1,t_{1}%
,...,t_{n}\in\lbrack0,T],\varphi\in C_{b.Lip}(\mathbb{R}^{d\times n})\}, \ L_{ip} (\Omega):=\underset{T}{\cup}L_{ip} (\Omega_T),
\]
where $C_{b.Lip}(\mathbb{R}^{ n})$ is the space of all
bounded Lipschitz functions  on $\mathbb{R}^n$.

Let  $\mathbb{S}_{d}$ be the space of all $d\times d$ symmetric matrices. For each given monotonic and sublinear function
$G:\mathbb{S}_{d}\rightarrow\mathbb{R}$, Peng constructed a sublinear expectation
space $(\Omega,L_{ip}(\Omega),\hat{\mathbb{E}},(\hat{\mathbb{E}}_t)_{t\geq 0})$ called $G$-expectation space.
Indeed,  for each $\xi\in L_{ip}(\Omega)$ with the  form of
\begin{equation*}
\xi(\omega)=\varphi(\omega_{t_{1}},\omega_{t_{2}},\cdots,\omega_{t_{k}}),\  \
0=t_{0}<t_{1}<\cdots<t_{k}<\infty,
\end{equation*}
we  define the conditional  $G$-expectation by
\begin{equation*}
\mathbb{\hat{E}}_{t}[\xi]:=u_{i}(t,\omega_t;\omega_{t_{1}},\cdots,\omega_{
t_{i-1}})
\end{equation*}
for each $t\in \lbrack t_{i-1},t_{i})$, $i=1,\ldots,k$. Here,  the function $u_{i}(t,x;x_{1},\cdots,x_{i-1})$ parameterized by $(x_{1},\cdots,x_{i-1})\in \mathbb{R}^{(i-1)\times d}$
is the solution of the following $G$-heat equation:
\begin{equation*}
\partial_{t}u_{i}(t,x;x_{1},\cdots,x_{i-1})+G(D^2_{x}u_{i}(t,x;x_{1},\cdots,x_{i-1}))=0, \ \ (t,x)\in [t_{i-1},t_{i})\times \mathbb{R}^d
\end{equation*}
with terminal conditions
\begin{equation*}
u_{i}(t_{i},x;x_{1},\cdots,x_{i-1})=u_{i+1}(t_{i},x;x_{1},\cdots, x_{i-1},x),
\, \, \hbox{for $i<k$}
\end{equation*}
and $u_{k}(t_{k},x;x_{1},\cdots,x_{k-1})=\varphi (x_{1},\cdots, x_{k-1},x)$.
The $G$-expectation of $\xi$ is defined by $\mathbb{\hat{E}}[\xi]=%
\mathbb{\hat{E}}_{0}[\xi]$.
In this space the corresponding
canonical process $B_t(\omega) = \omega_t$
 is called $G$-Brownian motion.

 Denote by $L_{G}^{p}(\Omega)$   the completion of
$L_{ip} (\Omega)$ under the norm $|\mathbb{\hat{E}}[|\cdot|^{p}]|^{1/p}$ for each  $p\geq1$.
Denis et al. \cite{DHP11}
proved that the completions of $C_{b}(\Omega)$ (the set of all bounded
continuous functions on $\Omega$) and $L_{ip} (\Omega)$  are the same.
Similarly, we can define $L_{G}^{p}(\Omega_T)$ for each $T>0$. In this paper, we shall only consider non-degenerate $G$-Brownian motion,
i.e., there exist some constants $0<\underline{\sigma}^2\leq \bar{\sigma}^2<\infty $ such that, for any $A\geq B$ \[
 \frac{1}{2}\underline{\sigma}^2tr[A-B]\leq G(A)-G(B)\leq \frac{1}{2}\bar{\sigma}^2tr[A-B].\]
\begin{theorem}[\cite{DHP11,HP09}]
\label{the2.7}  There exists a weakly compact set  $\mathcal{P}$ of probability
measures on $(\Omega,\mathcal{B}(\Omega))$
  such that
\[
\mathbb{\hat{E}}[\xi]=\sup_{P\in\mathcal{P}}E_{P}[\xi]\ \ \text{for
all}\ \xi\in  {L}_{G}^{1}{(\Omega)}.
\]
$\mathcal{P}$ is called a set that represents $\mathbb{\hat{E}}$.
\end{theorem}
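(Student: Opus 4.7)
My plan is to construct $\mathcal{P}$ explicitly as laws of controlled stochastic integrals and then verify the representation first on cylinder random variables before extending by $L^1_G$-density. On an auxiliary probability space $(\widetilde\Omega,\widetilde{\mathcal F},\widetilde P)$ carrying a $d$-dimensional standard Brownian motion $W$, let $\Gamma\subset\mathbb{S}_d$ be the bounded closed convex set of nonnegative symmetric matrices characterized by $G(A)=\tfrac12\sup_{\gamma\in\Gamma}\mathrm{tr}(\gamma A)$, whose existence follows from the monotonicity and sublinearity of $G$ together with the non-degeneracy bounds $\underline\sigma^2,\bar\sigma^2$. For each progressively measurable $\sigma$ with $\sigma_s\sigma_s^\top\in\Gamma$ almost everywhere, set $M^\sigma_t:=\int_0^t\sigma_s\,dW_s$ and let $P_\sigma$ denote its law on $\Omega$. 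Take $\mathcal{P}_0:=\{P_\sigma\}$ and let $\mathcal{P}$ be its weak closure in $\mathcal{M}_1(\Omega)$.

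For cylinder $\xi=\varphi(B_{t_1},\ldots,B_{t_n})\in L_{ip}(\Omega)$, I would recover the identity $\mathbb{\hat E}[\xi]=\sup_{P\in\mathcal P}E_P[\xi]$ from the PDE definition of the $G$-expectation: $\mathbb{\hat E}[\xi]$ is obtained by iteratively solving the $G$-heat equation $\partial_t u - G(D_x^2 u)=0$ with terminal data built from $\varphi$, and by the standard Hamilton--Jacobi--Bellman representation of viscosity solutions to this fully nonlinear PDE (Fleming--Soner type), the resulting value coincides with $\sup_\sigma \widetilde E[\varphi(M^\sigma_{t_1},\ldots,M^\sigma_{t_n})]=\sup_{P\in\mathcal P_0}E_P[\xi]$. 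Since $\xi$ gives rise to a bounded continuous functional on $\Omega$, passing to the weak closure preserves the supremum, so equality holds for $\mathcal{P}$ as well.

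To pass from $L_{ip}(\Omega)$ to $L^1_G(\Omega)$ I would use two facts. First, the uniform Burkholder bound $\widetilde E[|M^\sigma_t-M^\sigma_s|^4]\le C\,|t-s|^2$, with $C$ depending only on $\Gamma$, together with Kolmogorov's criterion yields tightness of $\{P_\sigma\}$ on each $C_0([0,N];\mathbb{R}^d)$; the sequence of such tight families pieces together to give tightness on $\Omega$ with the metric $\rho$, so $\mathcal{P}$ is weakly compact by Prokhorov. Second, both sublinear expectations $\mathbb{\hat E}[\cdot]$ and $\xi\mapsto\sup_{P\in\mathcal P}E_P[\xi]$ are $\|\cdot\|_{L^1_G}$-continuous on $L_{ip}(\Omega)$: for the upper expectation this follows from the regularity of the capacity $c(A):=\sup_{P\in\mathcal P}P(A)$ once $\mathcal{P}$ is tight. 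Equality on the dense subspace $L_{ip}(\Omega)$ therefore extends to all of $L^1_G(\Omega)$.

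The main obstacle is the identification step on cylinder functionals, where one must reconcile the analytic PDE-based definition of $\mathbb{\hat E}$ with the dynamic stochastic-control supremum; this requires the full strength of viscosity-solution theory (comparison, stability, and the control representation for the $G$-heat equation) and a careful setup on $\widetilde\Omega$ so that admissible controls correspond precisely to laws on the canonical space $\Omega$. By contrast, the Kolmogorov--Prokhorov tightness argument is routine, and the $L^1_G$-extension is a soft density argument once tightness and regularity of the capacity are in hand.
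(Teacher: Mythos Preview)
The paper does not prove this theorem; it is quoted in the preliminaries as a known result from \cite{DHP11,HP09} and invoked without argument. There is therefore no ``paper's own proof'' to compare against.

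That said, your outline is essentially the route taken in the cited references. Denis--Hu--Peng construct $\mathcal{P}$ as the weak closure of laws of $\int_0^\cdot \sigma_s\,dW_s$ with $\sigma_s\sigma_s^\top\in\Gamma$, establish the representation on $C_b(\Omega)$ (or equivalently on $L_{ip}(\Omega)$) via the dynamic-programming/HJB link to the $G$-heat equation, obtain weak compactness from Kolmogorov--Prokhorov tightness, and then extend to $L_G^1(\Omega)$ by density and regularity of the associated capacity. Your identification of the main technical point---matching the PDE-based definition of $\mathbb{\hat E}$ with the stochastic-control supremum on cylinder functionals---is accurate; in \cite{DHP11} this is handled by first proving the representation on bounded continuous functions and then showing that $L_G^1(\Omega)$ embeds into an intrinsic $L^1$ space defined via the upper expectation, which makes the density extension clean. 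Nothing in your sketch is wrong, though a fully rigorous write-up would need the measurable-selection and approximation arguments that make the control representation exact rather than merely an inequality.
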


Let $\mathcal{P}$ be a weakly compact set that represents $\mathbb{\hat{E}}$.
Then we define the following capacity%
\[
c(A):=\sup_{P\in\mathcal{P}}P(A),\ A\in\mathcal{B}(\Omega).
\]
A set $A\subset\mathcal{B}(\Omega)$ is polar if $c(A)=0$.  A
property holds $``quasi$-$surely''$ (q.s.) if it holds outside a
polar set. In the following, we do not distinguish between two random
variables $X$ and $Y$ if $X=Y$ q.s..

\begin{definition}
\label{def2.6} Let $M_{G}^{0}(0,T)$ be the collection of processes in the
following form: for a given partition $\{t_{0},\cdot\cdot\cdot,t_{N}\}$ of $[0,T]$,
\[
\eta_{t}(\omega)=\sum_{j=0}^{N-1}\xi_{j}(\omega)\mathbf{1}_{[t_{j},t_{j+1})}(t),
\]
where $\xi_{i}\in L_{ip}(\Omega_{t_{i}})$, $i=0,1,2,\cdot\cdot\cdot,N-1$. For each
$p\geq1$,  denote by  $M_{G}^{p}(0,T)$ the completion
of $M_{G}^{0}(0,T)$ under the norm $|\mathbb{\hat{E}}[\int_{0}^{T}|\eta_{s}|^{p}ds]|^{1/p}$.
\end{definition}

For each $1\leq i, j \leq d$,   denote by $\langle B^i, B^j\rangle$   the mutual variation process.
Then for two processes $ \eta\in M_{G}^{2}(0,T)$ and $ \xi\in M_{G}^{1}(0,T)$,
the $G$-It\^{o} integrals  $\int\eta_sdB^i_s$ and $\int\xi_sd\langle
B^i,B^j\rangle_s$  are well defined, see  Li-Peng \cite{L-P} and Peng \cite{P10}.
Let $S_{G}^{0}(0,T)=\{h(t,B_{t_{1}\wedge t},\cdot\cdot\cdot,B_{t_{n}\wedge
t}):t_{1},\ldots,t_{n}\in\lbrack0,T],h\in C_{b,Lip}(\mathbb{R}^{n+1})\}$. For each $p\geq1$ and $\eta\in S_{G}^{0}(0,T)$, we set $\Vert\eta\Vert_{S_{G}^{p}}=|
\mathbb{\hat{E}}[\sup_{t\in\lbrack0,T]}|\eta_{t}|^{p}]|^{\frac{1}{p}}$ and
denote by $S_{G}^{p}(0,T)$ the completion of $S_{G}^{0}(0,T)$ under the norm
$\Vert\cdot\Vert_{S_{G}^{p}}$.

Now, consider the following type of $G$-BSDEs in a finite interval $[0,T]$ (in this paper we always use Einstein convention):%
\begin{align}\label{e3}
Y_{t}    =\xi+\int_{t}^{T}f(s,Y_{s},Z_{s})ds+\int_{t}^{T}g_{ij}(s,Y_{s}%
,Z_{s})d\langle B^{i},B^{j}\rangle_{s} -\int_{t}^{T}Z_{s}dB_{s}-(K_{T}-K_{t}), %
\end{align}
where

\[
f(t,\omega,y,z),g_{ij}(t,\omega,y,z):[0,\infty)\times\Omega\times
\mathbb{R}\times\mathbb{R}^{d}\rightarrow\mathbb{R}%
\]
satisfy the following properties:
\begin{description}
\item[(H1)] There exists a constant $\beta>0$ such that for any $y,z$,
$f(\cdot,\cdot,y,z),g_{ij}(\cdot,\cdot,y,z)\in M_{G}^{2+\beta}(0,n)$ for each $n>0$;

\item[(H2)] There exists a constant $L_1>0$ such that
\[
|f(t,y,z)-f(t,y^{\prime},z^{\prime})|+\sum_{i,j=1}^{d}|g_{ij}(t,y,z)-g_{ij}%
(t,y^{\prime},z^{\prime})|\leq L_1(|y-y^{\prime}|+|z-z^{\prime}|).
\]
\end{description}

For simplicity, we denote by $\mathfrak{S}_{G}^{2}(0,T)$ the collection
of processes $(Y,Z,K)$ such that $Y\in S_{G}^{2}(0,T)$, $Z\in
M_{G}^{2}(0,T;\mathbb{R}^{d})$, $K$ is a decreasing $G$-martingale with
$K_{0}=0$ and $K_{T}\in L_{G}^{2}(\Omega_{T})$.

\begin{theorem}[\cite{HJPS}]
\label{the4.1}  Assume that $\xi\in L_{G}^{2+\beta}(\Omega_{T})$
and $f$, $g_{ij}$ satisfy \emph{(H1)}, \emph{(H2)} for some $\beta>0$. Then equation
\eqref{e3} has a unique solution $(Y,Z,K)\in \mathfrak{S}_{G}^{2}(0,T)$.
\end{theorem}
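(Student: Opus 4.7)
The plan is to organize the argument into three stages: a priori estimates on any solution $(Y,Z,K)$, uniqueness as an immediate consequence, and existence via a contraction argument starting from the case in which $f$ and $g_{ij}$ do not depend on $(y,z)$.

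Stage 1 (a priori estimate). Given $(Y,Z,K)\in\mathfrak{S}_{G}^{2}(0,T)$ solving \eqref{e3}, I would apply $G$-It\^o's formula to $|Y_t|^2$ (or to $e^{\beta t}|Y_t|^2$ with $\beta$ large enough). Using (H2) to Lipschitz-control $f$ and $g_{ij}$, Young's inequality on the cross-terms $|Y_s||Z_s|$, and the non-degeneracy bound $\underline{\sigma}^2 I\,ds\le d\langle B\rangle_s\le\bar{\sigma}^2 I\,ds$ to convert the quadratic-variation term into a genuine $|Z|^2$-estimate, one obtains, after taking $\mathbb{\hat{E}}[\cdot]$ (so that $\int Z\,dB$ and $K$ drop out by the $G$-martingale property), the bound
\[
\mathbb{\hat{E}}\Bigl[\sup_{t\in[0,T]}|Y_t|^2\Bigr]+\mathbb{\hat{E}}\Bigl[\int_0^T|Z_s|^2\,ds\Bigr]+\mathbb{\hat{E}}[|K_T|^2]\le C\,\mathbb{\hat{E}}\Bigl[|\xi|^2+\int_0^T\bigl(|f(s,0,0)|^2+\sum_{i,j}|g_{ij}(s,0,0)|^2\bigr)ds\Bigr].
\]
Controlling $\mathbb{\hat{E}}[|K_T|^2]$ requires in addition a BDG-type estimate for $G$-It\^o integrals; it is at this step that the higher integrability from (H1) and from $\xi\in L_G^{2+\beta}$ is used to close the argument.

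Stage 2 (uniqueness). For two solutions, set $\delta Y=Y^1-Y^2$, $\delta Z=Z^1-Z^2$, $\delta K=K^1-K^2$. Even though $\delta K$ is no longer a decreasing $G$-martingale, it is still the difference of two $G$-martingales, so $\mathbb{\hat{E}}_t[\delta K_T-\delta K_t]=0$. Rerunning the squared-increment estimate on $\delta Y$ with the Lipschitz controls of $f,g_{ij}$ and Gr\"onwall's lemma then forces $\delta Y\equiv 0$, and the identity \eqref{e3} applied to the two solutions yields $\delta Z\equiv 0$ and $\delta K\equiv 0$.

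Stage 3 (existence). First treat the case in which $f$ and $g_{ij}$ depend only on $(t,\omega)$: set
\[
Y_t:=\mathbb{\hat{E}}_t\Bigl[\xi+\int_t^T f(s)\,ds+\int_t^T g_{ij}(s)\,d\langle B^i,B^j\rangle_s\Bigr],
\]
so that $M_t:=Y_t+\int_0^t f(s)\,ds+\int_0^t g_{ij}(s)\,d\langle B^i,B^j\rangle_s$ is a $G$-martingale, and invoke Song's $G$-martingale representation theorem to produce $Z\in M_G^2$ and a decreasing $G$-martingale $K$ with $M_t=M_0+\int_0^t Z_s\,dB_s+K_t$; rearranging reproduces \eqref{e3}. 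For the general Lipschitz case, define $\Gamma:(y,z)\mapsto(Y,Z)$ by solving this simple problem with data $\bigl(f(\cdot,y_\cdot,z_\cdot),g_{ij}(\cdot,y_\cdot,z_\cdot)\bigr)$; applying the a priori estimate to $\Gamma(y^1,z^1)-\Gamma(y^2,z^2)$ shows that $\Gamma$ contracts on $S_G^2(0,T')\times M_G^2(0,T')$ for $T'$ small, and partitioning $[0,T]$ into finitely many such subintervals and stitching yields a global solution.

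The main obstacle throughout is the asymmetric treatment of the decreasing $G$-martingale $K$: because $\mathbb{\hat{E}}$ is only sublinear, $-K$ is not a $G$-martingale, so the $dK$-terms cannot be cancelled symmetrically the way one cancels $\int Z\,dB$. All the estimates therefore have to combine the one-sided martingale property of $K$ with BDG-type controls on $\int Z\,dB$, and this asymmetry is precisely why the strengthened integrability $\xi\in L_G^{2+\beta}$ (rather than just $L_G^2$) is assumed in the statement.
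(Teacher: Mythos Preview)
The paper does not prove this theorem; it is quoted from \cite{HJPS} as a preliminary result, so there is no in-paper proof to compare against. Your outline is the right general architecture (a priori estimate, uniqueness, existence by Picard), and it matches the strategy in \cite{HJPS}, but two of your steps contain a genuine error concerning the decreasing $G$-martingale $K$.

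First, in Stage~1 you write that after taking $\mathbb{\hat{E}}[\cdot]$ the $K$-term ``drops out by the $G$-martingale property''. It does not. Applying $G$-It\^o's formula to $|Y_t|^2$ produces a term $-2\int_t^T Y_s\,dK_s$, which has no definite sign and is not a $G$-martingale increment; one only knows $-dK_s\ge 0$. In \cite{HJPS} this term is kept and bounded by $2\sup_s|Y_s|\cdot|K_T|$, and then $|K_T|$ is estimated separately from the equation itself. This circularity is exactly why the $L_G^{2+\beta}$ integrability is needed, not merely ``BDG-type controls on $\int Z\,dB$''.

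Second, and more seriously, in Stage~2 you claim that since $\delta K=K^1-K^2$ is the difference of two $G$-martingales, $\mathbb{\hat{E}}_t[\delta K_T-\delta K_t]=0$. This is false: $\mathbb{\hat{E}}$ is only sublinear, so $\mathbb{\hat{E}}_t[K^1_T-K^2_T]$ need not equal $K^1_t-K^2_t$; indeed $-K^2$ is not a $G$-martingale, as you yourself note in the closing paragraph. The uniqueness (and the contraction) estimate in \cite{HJPS} instead keeps the cross-term $-2\int_t^T\hat Y_s\,d\hat K_s$, bounds it by $2\|\hat Y\|_{S_G^2}(\|K^1_T\|_{L_G^2}+\|K^2_T\|_{L_G^2})$, and closes the loop using the a priori bound on each $K^l$. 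Your Gr\"onwall argument as written would not go through without this correction, and the same issue reappears in the contraction step of Stage~3.
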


We have the following estimates.

\begin{theorem}[\cite{HJPS}]
\label{pro3.5}  Let $\xi^{l}\in L_{G}^{2+\beta}(\Omega_{T})$ ,
$l=1,2$ and $f^{l}$, $g_{ij}^{l}$ satisfy \emph{(H1)},\emph{(H2)} for some $\beta>0$.
Assume that $(Y^{l},Z^{l},K^{l})\in\mathfrak{S}_{G}^{2}(0,T)$ is the solution of equation \eqref{e3} corresponding to the data
($\xi^{l}$ $f^{l}$,$g_{ij}^{l}$). Set $\hat{Y}_{t}=Y_{t}^{1}-Y_{t}^{2}%
,\hat{Z}_{t}=Z_{t}^{1}-Z_{t}^{2}$.
Then there exists a constant $C$ depending on
$T$, $G$, $L_1$ such that%
\begin{align*}
&\mathbb{\hat{E}}[\sup_{t\in\lbrack0,T]}|\hat{Y}_{t}|^{2}]    \leq
C\{ \mathbb{\hat{E}}[\sup_{t\in\lbrack0,T]}%
\mathbb{\hat{E}}_{t}[|\hat{\xi}|^{2}]]  +\mathbb{\hat{E}}[\sup_{t\in\lbrack0,T]}\mathbb{\hat{E}}_{t}[(\int_{0}%
^{T}\hat{h}_{s}ds)^{2}]]\},\\
&\mathbb{\hat{E}}[\int_{0}^{T}|\hat{Z}_{s}|^2ds]\leq
C\{ \Vert \hat{Y}\Vert_{{S}^{2}_G}^{2}+\Vert \hat {Y}%
\Vert_{{S}^{2}_G}\sum_{l=1}^{2}[||Y^{l}||_{%
{S}^{2}_G}+||\int_{0}^{T}h_{s}^{l,0}ds||_{%
L^2_G}]\},
\end{align*}
where $\hat{\xi}=\xi^{1}-\xi^{2}$, $\hat{h}_{s}=|f^{1}(s,Y_{s}^{2},Z_{s}%
^{2})-f^{2}(s,Y_{s}^{2},Z_{s}^{2})|+\sum_{i,j=1}^{d}|g_{ij}^{1}(s,Y_{s}%
^{2},Z_{s}^{2})-g_{ij}^{2}(s,Y_{s}^{2},Z_{s}^{2})|$ and $h_{s}^{l,0}=|f^l(s,0,0)|+|g^l_{ij}(s,0,0)|$.
\end{theorem}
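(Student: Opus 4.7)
The plan is to rewrite $\hat Y := Y^{1}-Y^{2}$ as the solution of a single $G$-BSDE of the form \eqref{e3} with terminal value $\hat\xi$ and a driver that splits into a Lipschitz-in-$(\hat Y,\hat Z)$ part and an inhomogeneity bounded by $\hat h$, and then to extract the two estimates by applying $G$-It\^o's formula to $|\hat Y|^{2}$ with an appropriate exponential weight. Concretely I would add and subtract $f^{1}(s,Y^{2}_s,Z^{2}_s)$ and $g^{1}_{ij}(s,Y^{2}_s,Z^{2}_s)$ so that, using (H2),
\[
f^{1}(s,Y^{1}_s,Z^{1}_s)-f^{2}(s,Y^{2}_s,Z^{2}_s)= a_s\hat Y_s + \langle b_s,\hat Z_s\rangle + \delta f_s,
\]
and similarly for $g_{ij}$, where $a_s,a^{ij}_s,b_s,b^{ij}_s$ are progressively measurable and bounded by $L_1$, while $|\delta f_s|+\sum_{i,j}|\delta g^{ij}_s|\le \hat h_s$. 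This recasts the equation for $\hat Y$ as one of the form \eqref{e3} with $\hat K = K^{1}-K^{2}$.

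For the first estimate I would apply $G$-It\^o to $e^{\kappa t}|\hat Y_t|^{2}$ with $\kappa=\kappa(L_1,G)$ large enough that the Lipschitz cross terms $a_s\hat Y_s^{2}$ and $2\hat Y_s\langle b_s,\hat Z_s\rangle$ (and their $g_{ij}$ counterparts) get absorbed into the coercive term $\int_t^{T} e^{\kappa s}|\hat Z_s|^{2}\,d\langle B^{i},B^{i}\rangle_s$, using non-degeneracy $d\langle B^{i},B^{i}\rangle_s\ge \underline{\sigma}^{2}\,ds$. The conditional $G$-expectation $\hat{\mathbb{E}}_t$ then annihilates the $dB$-integral; the $d\hat K$ contribution is handled by running the estimate for the positive and negative parts of $\hat Y$ separately and using monotonicity of each $K^{l}$, as in \cite{HJPS}. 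The outcome is a pointwise bound $|\hat Y_t|^{2}\le C\,\hat{\mathbb{E}}_t[|\hat\xi|^{2}+(\int_t^{T}\hat h_s\,ds)^{2}]$, and taking $\sup_t$ followed by $\hat{\mathbb{E}}$ gives the first inequality.

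For the second estimate I would apply $G$-It\^o directly to $|\hat Y_t|^{2}$ on $[0,T]$. Non-degeneracy yields $\underline{\sigma}^{2}\,\hat{\mathbb{E}}[\int_0^{T}|\hat Z_s|^{2}ds]\le \hat{\mathbb{E}}[\int_0^{T}|\hat Z_s|^{2}\,d\langle B^{i},B^{i}\rangle_s]$, so the integrated $|\hat Z|^{2}$ is dominated by boundary terms $|\hat Y_0|^{2}+|\hat\xi|^{2}$, Lipschitz cross terms of size $\|\hat Y\|^{2}_{S^{2}_G}$, an inhomogeneity cross term bounded by $\|\hat Y\|_{S^{2}_G}\|\int_0^{T}\hat h_s\,ds\|_{L^{2}_G}$ (itself controlled by $\|\hat Y\|^{2}_{S^{2}_G}$ via the first estimate), a $\hat Z\,dB$ integral of zero $G$-expectation, and the term $-2\hat{\mathbb{E}}\int_0^{T}\hat Y_s\,d\hat K_s$. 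This last term is bounded by $\|\hat Y\|_{S^{2}_G}\sum_l\|K^{l}_T\|_{L^{2}_G}$; invoking the a priori bound $\|K^{l}_T\|_{L^{2}_G}\le C(\|Y^{l}\|_{S^{2}_G}+\|\int_0^{T} h^{l,0}_s\,ds\|_{L^{2}_G})$ from \cite{HJPS} produces exactly the remaining terms on the right-hand side.

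The main obstacle is the decreasing $G$-martingale increments: $\hat K=K^{1}-K^{2}$ is neither monotone nor a symmetric martingale, so conditional $G$-expectation does not eliminate it and the Girsanov-type linearization of \cite{BH,PP90} is not available because $\hat{\mathbb{E}}$ is only sublinear. The workaround, following \cite{HJPS}, is to accept an $L^{2}$-type loss in $\hat K$ and appeal to the explicit $L^{2}$ bound on $K^{l}_T$, which is precisely what forces the additional $\sum_{l}(\|Y^{l}\|_{S^{2}_G}+\|\int_0^{T}h^{l,0}_s\,ds\|_{L^{2}_G})$ terms into the second estimate and breaks the symmetry with the classical Brownian a priori bound.
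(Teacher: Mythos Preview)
This theorem is not proved in the paper at all: it is quoted from \cite{HJPS} as a known a~priori estimate for finite-horizon $G$-BSDEs, and the paper immediately moves on. There is therefore no ``paper's own proof'' to compare your sketch against; the authors only use the statement as a black box (in Theorems~\ref{HM3} and in Lemma~\ref{my7}).

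That said, as a sketch of the argument in \cite{HJPS} your second paragraph (the $\hat Z$--estimate) is on target: It\^o on $|\hat Y|^{2}$, non-degeneracy of $G$ to recover $\int|\hat Z|^{2}\,ds$, and control of $-2\int\hat Y_{s}\,d\hat K_{s}$ via $\|\hat Y\|_{S^{2}_{G}}\sum_{l}\|K^{l}_{T}\|_{L^{2}_{G}}$ together with the a~priori bound on $K^{l}_{T}$ --- this is exactly how the extra $\sum_{l}(\|Y^{l}\|_{S^{2}_{G}}+\|\int h^{l,0}\|_{L^{2}_{G}})$ term arises. For the first estimate, however, the phrase ``running the estimate for the positive and negative parts of $\hat Y$ separately and using monotonicity of each $K^{l}$'' does not work as stated: applying It\^o to $((\hat Y)^{+})^{2}$ still leaves the term $-2\int(\hat Y_{s})^{+}\,dK^{1}_{s}\ge 0$ with the wrong sign, and symmetrically for $((\hat Y)^{-})^{2}$ and $K^{2}$; neither piece is absorbed by sign considerations alone. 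In \cite{HJPS} the pointwise bound $|\hat Y_{t}|^{\alpha}\le C\,\hat{\mathbb E}_{t}[\cdots]$ is obtained by a more delicate argument (It\^o on $|\hat Y|^{\alpha}$ for suitable $\alpha$, combined with the $L^{2}$ control of $K^{l}_{T}$ and a BDG-type step under $\hat{\mathbb E}$), not by a one-sided splitting. Your diagnosis of the obstacle (that $\hat K$ is neither monotone nor a symmetric martingale) is correct; the proposed remedy is not.
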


Note that the estimate for $Z$ is different from the classical case because of the existence of the decreasing $G$-martingale $K$.
We also have the  explicit solutions of linear $G$-BSDEs.
For convenience, assume $d=1$. Consider the following linear $G$-BSDE in finite horizon $[0,T]$:
\begin{equation}
Y_{t}=\xi+\int_{t}^{T}f_{s}ds+\int_{t}^{T}g_{s}d\langle B\rangle_{s}-\int%
_{t}^{T}Z_{s}dB_{s}-(K_{T}-K_{t}), \label{LBSDE1}%
\end{equation}
where $f_{s}=a_{s}Y_{s}+b_{s}Z_{s}+m_{s}$, $g_{s}=c_{s}Y_{s}+d_{s}Z_{s}+n_{s}$
with bounded
processes $(a_{s})_{s\in\lbrack0,T]}$, $(b_{s})_{s\in\lbrack0,T]}$,
$(c_{s})_{ s\in\lbrack0,T]}$, $(d_{s})_{s\in\lbrack0,T]}\in M_{G}^{p}(0,T)$ and
$(m_{s})_{s\in\lbrack0,T]}$, $(n_{s})_{s\in\lbrack0,T]}\in M_{G}^{p}(0,T)$, $\xi\in L_{G}^{p}(\Omega_{T})$ for some $p>1$.

Then we construct an auxiliary extended
$\tilde{G}$-expectation space $(\tilde{\Omega},L_{\tilde{G}}^{1}%
(\tilde{\Omega}),\mathbb{\hat{E}}^{\tilde{G}})$ with $\tilde{\Omega}=C_{0}([0,\infty),\mathbb{R}^{2})$ and%
\[
\tilde{G}(A)=\frac{1}{2}\sup_{\underline{\sigma}^{2}\leq v\leq\bar{\sigma}%
^{2}}\mathrm{tr}\left[  A\left[
\begin{array}
[c]{cc}%
v & 1\\
1 & v^{-1}%
\end{array}
\right]  \right]  ,\ A\in\mathbb{S}_{2}.
\]
Let $(B_{t},\tilde{B}_{t})_{t\geq 0}$ be the canonical process in the extended space.

Suppose $\{X_{t}\}_{t\in\lbrack0,T]}$  is the solution of the following $\tilde{G}$-SDE:%
\begin{equation}
X_{t}=1+\int_{0}^{t}a_{s}X_{s}ds+\int_{0}^{t}c_{s}X_{s}d\langle B\rangle
_{s}+\int_{0}^{t}d_{s}X_{s}dB_{s}+\int_{0}^{t}b_{s}X_{s}d\tilde{B}_{s}.
\label{LSDE2}%
\end{equation}
It is easy to verify that%
\begin{equation}
X_{t}=\exp(\int_{0}^{t}(a_{s}-b_{s}d_{s})ds+\int_{0}^{t}c_{s}d\langle
B\rangle_{s})\mathcal{E}_{t}^{B}\mathcal{E}_{t}^{\tilde{B}}, \label{LSDE3}%
\end{equation}
where $\mathcal{E}_{t}^{B}=\exp(\int_{0}^{t}d_{s}dB_{s}-\frac{1}{2}\int%
_{0}^{t}d_{s}^{2}d\langle B\rangle_{s})$, $\mathcal{E}_{t}^{\tilde{B}}%
=\exp(\int_{0}^{t}b_{s}d\tilde{B}_{s}-\frac{1}{2}\int_{0}^{t}b_{s}^{2}%
d\langle\tilde{B}\rangle_{s})$.

\begin{lemma}[\cite{HJPS1}]
\label{the5.2} In the extended $\tilde{G}$-expectation space, the solution of
the $G$-BSDE \eqref{LBSDE1} can be represented as%
\begin{equation*}
Y_{t}=(X_{t})^{-1}\mathbb{\hat{E}}_{t}^{\tilde{G}}[X_{T}\xi+\int_{t}^{T}%
m_{s}X_{s}ds+\int_{t}^{T}n_{s}X_{s}d\langle B\rangle_{s}],
\end{equation*}
where $\{X_{t}\}_{t\in\lbrack0,T]}$ is the solution of the $\tilde{G}$-SDE
\eqref{LSDE2}.
Moreover,
\[
(X_{t})^{-1}\mathbb{\hat{E}}_{t}^{\tilde{G}}[X_{T}K_{T}-\int_{t}^{T}a_{s}K_{s}X_{s}%
ds-\int_{t}^{T}c_{s}K_{s}X_{s}d\langle
B\rangle_{s}]=K_{t}.
\]
\end{lemma}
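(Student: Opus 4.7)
The plan is to apply the $G$-It\^o product rule in the extended $\tilde{G}$-expectation space to both $X_tY_t$ and $X_tK_t$, and then exploit the $\tilde{G}$-martingale structure of the compensated processes to read off the two representation formulas.

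First I would apply It\^o's product rule to $X_sY_s$ in the extended space. From the off-diagonal entries of the matrix defining $\tilde{G}$, the canonical processes satisfy $d\langle B,\tilde{B}\rangle_s = ds$, so $d\langle X,Y\rangle_s = d_sX_sZ_s\,d\langle B\rangle_s + b_sX_sZ_s\,ds$. Expanding $d(X_sY_s) = X_s\,dY_s + Y_s\,dX_s + d\langle X,Y\rangle_s$ using the defining equations for $X$ and $Y$, the coefficient of $ds$ collapses to $-X_sm_s$ (the $a_sY_s$ contributions from $X\,dY$ and $Y\,dX$ cancel and the $b_sZ_s$ contribution from $f$ is absorbed by the cross-variation), while the coefficient of $d\langle B\rangle_s$ simplifies analogously to $-X_sn_s$. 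Thus
$$d(X_sY_s) = -X_sm_s\,ds - X_sn_s\,d\langle B\rangle_s + (X_sZ_s + d_sX_sY_s)\,dB_s + b_sX_sY_s\,d\tilde{B}_s + X_s\,dK_s.$$

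Setting $M_s := X_sY_s + \int_0^sX_rm_r\,dr + \int_0^sX_rn_r\,d\langle B\rangle_r$, the differential $dM_s$ is the sum of two $\tilde{G}$-It\^o integrals (against $dB$ and $d\tilde{B}$, which are symmetric $\tilde{G}$-martingales) plus the term $X_s\,dK_s$. Once one verifies that $\int_0^\cdot X_r\,dK_r$ is itself a (decreasing) $\tilde{G}$-martingale, $M$ is a $\tilde{G}$-martingale, and the identity $M_t = \mathbb{\hat{E}}_t^{\tilde{G}}[M_T]$, after pulling out the $\mathcal{F}_t$-measurable Lebesgue integrals from $0$ to $t$, yields the first formula. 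An entirely parallel computation for $X_tK_t$, using that $K$ has finite variation so no cross-variation with $X$ arises, shows that $X_sK_s - \int_0^sa_rX_rK_r\,dr - \int_0^sc_rX_rK_r\,d\langle B\rangle_r$ is a $\tilde{G}$-martingale, which rearranges to the second formula.

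The main obstacle is establishing that $\int_0^\cdot X_r\,dK_r$ is a $\tilde{G}$-martingale in the enlarged space. Since $K$ depends only on the $B$-path and the marginal $\tilde{G}$-law of $B$ coincides with its $G$-law, $K$ remains a decreasing $\tilde{G}$-martingale on the extended space; one then needs to show that this property is preserved under integration against the positive It\^o process $X$, presumably by approximation with simple integrands together with the standard $G$-martingale estimates. This delicate point is precisely what allows the $dK$ term to disappear from the final representation without leaving a residual sublinear correction, and it is what makes the formula for $K_t$ (rather than a trivial consequence of the dynamics of $K$) a nontrivial companion statement to the formula for $Y_t$.
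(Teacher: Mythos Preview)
The paper does not give its own proof of this lemma; it is quoted from \cite{HJPS1} without argument. Your approach---applying the product rule to $X_sY_s$ in the extended space, using $d\langle B,\tilde B\rangle_s=ds$ to collapse the drift to $-X_sm_s\,ds-X_sn_s\,d\langle B\rangle_s$, and reducing everything to the $\tilde G$-martingale property of $\int_0^{\cdot}X_r\,dK_r$---is precisely the argument in \cite{HJPS1}, and you have correctly isolated the one nontrivial step (that integrating the decreasing $G$-martingale $K$ against the positive process $X$ preserves the martingale property in the enlarged space).
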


The following estimate is important for our future discussions, whose proof will be given in the appendix.
\begin{lemma}
\label{my9}
Suppose the processes $(Y,Z,K)\in \mathfrak{S}_{G}^{2}(0,T)$ is the solution to the following equation
\[
Y_{t}=\xi+\int_{t}^{T}f_{s}ds+\int_{t}^{T}g_{s}d\langle B\rangle_{s}-\int%
_{t}^{T}Z_{s}dB_{s}-(K_{T}-K_{t})+(\bar{K}_T-\bar{K}_t),
\]
where $\bar{K}_t\in L^p_G(\Omega_t)$ is a decreasing $G$-martingale  for some $p>1$.
Moreover, $\xi$ is bounded by some constant $\rho_1$, $m_s, n_s$ are bounded by some constant $\rho_2$ and
 $a_s+\overline{\sigma}^2c_s\leq -\rho_3$ for some constant $\rho_3>0$.
Then
\[
Y_t\leq \rho_1\exp(-\rho_3(T-t))+\frac{1+\overline{\sigma}^2}{\rho_3}\rho_2.
\]
If we further assume that $\bar{K}_t=0$, then \[
|Y_t|\leq \rho_1\exp(-\rho_3(T-t))+\frac{1+\overline{\sigma}^2}{\rho_3}\rho_2.
\]
\end{lemma}

\section{$G$-BSDEs with infinite horizon}
For simplicity, we consider the $G$-expectation space $%
(\Omega,L_{G}^{1}(\Omega),\mathbb{\hat{E}})$ with $%
\Omega=C_{0}([0,\infty),\mathbb{R})$ and $\bar{\sigma}^{2}=\mathbb{\hat{E%
}}[B_{1}^{2}]\geq-\mathbb{\hat{E}}[-B_{1}^{2}]=\underline{\sigma}^{2}>0$.
But our results and methods still hold for the case $d>1$.

This section is devoted to studying the following type of BSDEs driven by $G$-Brownian motion with infinite horizon,
\begin{align} \label{HM}
Y_t=Y_T+\int^T_tf(s,Y_s,Z_s)ds+\int^T_tg(s,Y_s,Z_s)d\langle B\rangle_s-\int^T_tZ_sdB_s-(K_T-K_t), \ \ 0\leq t\leq T<\infty.
\end{align}

In the rest of this section we shall make use of the following assumptions on
the generators of $G$-BSDEs.
\begin{description}
\item[(H3)]  There exists a constant $\mu>0$  such that $(f(t,\omega,y,z)-f(t,\omega,y^{\prime},z))(y-y^{\prime})+2G((g(t,\omega,y,z)-g(t,\omega,y^{\prime},z))(y-y^{\prime}))\leq -\mu|y-y^{\prime}|^2 $.
\item[(H4)]
 $|f(s,0,0)|+\bar{\sigma}^2|g(s,0,0)|\leq L_2$ for some constant $L_2$.
\end{description}
\begin{definition}
\label{def3.1} A triplet of processes $(Y,Z,K)$ is called a solution
of equation \eqref{HM} if the following
properties hold:

\begin{description}
\item[(a)] $(Y,Z,K)\in \mathfrak{S}_{G}^{2}(0,\infty)$, where
$\mathfrak{S}_{G}^{2}(0,\infty)=\underset{T}{\cap}\mathfrak{S}_{G}^{2}(0,T)$;

\item[(b)] $Y_{t}=Y_T+\int_{t}^{T}f(s,Y_{s},Z_{s})ds+\int^T_tg(s,Y_s,Z_s)d\langle B\rangle_s-%
\int_{t}^{T}Z_{s}dB_{s}-(K_{T}-K_{t}), \ 0\leq t\leq T<\infty$.
\end{description}
\end{definition}

In this paper we only consider the case that $Y$ component of the solution to $G$-BSDE \eqref{HM} is bounded.
Indeed, $G$-BSDE \eqref{HM} may have more than one  solution.
\begin{example}{\upshape Taking $f(s,y,z)=-y$ and $g=0$,
one can easily show that $(ce^{t},0,0)$ is a solution to equation \eqref{HM} for each constant $c$. However, it has a unique bounded solution $(0,0,0)$.
}
\end{example}
\begin{remark}{\upshape Remark that (H3) is necessary to ensure the uniqueness of solution to equation \eqref{HM}.
 For example, taking $f(s,y,z)=y$ and $g=0$,
one can easily check that $(ce^{-t},0,0)$ is a bounded solution for each constant $c$.
}
\end{remark}
\begin{remark}{\upshape In order to state the main idea, we content ourselves with the case that $f(s,0,0)$ and $g(s,0,0)$ are bounded. Indeed, (H4) can be  weakened by a slightly more involved estimates (see, e.g. \cite{RM}). }
\end{remark}

The following result will be frequently used in this paper, which can be seen as a new version of linearization method for $G$-BSDEs.
\begin{lemma}\label{HW4}
For each given $\varepsilon>0$, there exist four bounded processes $a^{\varepsilon}_s(y,y^{\prime},z), b^{\varepsilon}_s(z,z^{\prime},y^{\prime})$, $c^{\varepsilon}_s(y,y^{\prime},z)$, $d^{\varepsilon}_s(z,z^{\prime},y^{\prime})$, such that
\begin{align*}
&a^{\varepsilon}_s(y,y^{\prime},z)+2G(c^{\varepsilon}_s(y,y^{\prime},z))\leq -\mu,\\
&
|f(s,y,z)-f(s,y^{\prime},z^{\prime})-a^{\varepsilon}_s(y,y^{\prime},z)(y-y^{\prime})-b^{\varepsilon}_s(z,z^{\prime},y^{\prime}))(z-z^{\prime})
|\leq 4L_1\varepsilon,\\&
|g(s,y,z)-g(s,y^{\prime},z^{\prime})-c^{\varepsilon}_s(y,y^{\prime},z)(y-y^{\prime})-d^{\varepsilon}_s(z,z^{\prime},y^{\prime}))(z-z^{\prime})
|\leq 4L_1\varepsilon.
\end{align*}
Moreover, for each $T>0$ and $Y,Y^{\prime},Z,Z^{\prime}\in M^2_G(0,T)$,
$a^{\varepsilon}_s(Y_s,Y^{\prime}_s,Z_s), b^{\varepsilon}_s(Z_s,Z^{\prime}_s,Y^{\prime}_s)$, $c^{\varepsilon}_s(Y_s,Y^{\prime}_s,Z_s)$,
$d^{\varepsilon}_s(Z_s,Z^{\prime}_s,Y^{\prime}_s)$ are in $M^2_G(0,T).$
\end{lemma}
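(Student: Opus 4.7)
The plan is to obtain the four coefficients by smoothing $f$ and $g$ in the $(y,z)$-variables via convolution and then invoking the fundamental theorem of calculus. Let $\rho$ be a smooth symmetric mollifier on $\mathbb{R}$ with $\mathrm{supp}\,\rho\subset[-1,1]$ and $\int\rho=1$, and set $\rho_\varepsilon(u)=\varepsilon^{-1}\rho(u/\varepsilon)$. Define
\[
f^\varepsilon(s,\omega,y,z):=\int_{\mathbb{R}^{2}}f(s,\omega,y-u,z-v)\rho_\varepsilon(u)\rho_\varepsilon(v)\,du\,dv,
\]
and $g^\varepsilon$ analogously. Then $f^\varepsilon,g^\varepsilon$ are $C^\infty$ in $(y,z)$ with $(s,\omega)$-measurability preserved, their $(y,z)$-partial derivatives are bounded by $L_1$, and $|f-f^\varepsilon|\leq 2L_1\varepsilon$, $|g-g^\varepsilon|\leq 2L_1\varepsilon$ pointwise in $(s,\omega)$.

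The first key step is to verify that (H3) survives this mollification. Writing
\[
(f^\varepsilon(y,z)-f^\varepsilon(y',z))(y-y')=\int_{\mathbb{R}^{2}}\bigl(f(y-u,z-v)-f(y'-u,z-v)\bigr)(y-y')\,\rho_\varepsilon(u)\rho_\varepsilon(v)\,du\,dv
\]
and the analogous identity for $g^\varepsilon$, and exploiting that $G$ is sublinear hence convex, so Jensen's inequality yields $G(\int X\,d\nu)\leq\int G(X)\,d\nu$ for any probability measure $\nu$, one integrates the pointwise (H3)-estimate (applied at each $(y-u,z-v)$ versus $(y'-u,z-v)$) to obtain
\[
(f^\varepsilon(y,z)-f^\varepsilon(y',z))(y-y')+2G\bigl((g^\varepsilon(y,z)-g^\varepsilon(y',z))(y-y')\bigr)\leq -\mu(y-y')^{2}.
\]
Dividing by $(y-y')^{2}>0$, invoking positive homogeneity of $G$, and letting $y\to y'$ via the $C^\infty$ regularity delivers the pointwise inequality $\partial_{y}f^\varepsilon(y,z)+2G(\partial_{y}g^\varepsilon(y,z))\leq -\mu$ for all $(y,z)$.

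Define now
\[
a^\varepsilon_{s}(y,y',z):=\int_{0}^{1}\partial_{y}f^\varepsilon\bigl(s,\omega,y'+\theta(y-y'),z\bigr)\,d\theta,\quad c^\varepsilon_{s}(y,y',z):=\int_{0}^{1}\partial_{y}g^\varepsilon\bigl(s,\omega,y'+\theta(y-y'),z\bigr)\,d\theta,
\]
and $b^\varepsilon_{s}(z,z',y'),d^\varepsilon_{s}(z,z',y')$ analogously as the $\theta$-averages of $\partial_{z}f^\varepsilon,\partial_{z}g^\varepsilon$ along the segment from $z'$ to $z$ at $y=y'$. Each is bounded by $L_1$ and Lipschitz in its finite-dimensional argument. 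The fundamental theorem of calculus gives the telescoping identity $f^\varepsilon(y,z)-f^\varepsilon(y',z')=a^\varepsilon_s(y-y')+b^\varepsilon_s(z-z')$, and combining with $|f-f^\varepsilon|\leq 2L_1\varepsilon$ at both $(y,z)$ and $(y',z')$ yields $|f(s,y,z)-f(s,y',z')-a^\varepsilon_s(y-y')-b^\varepsilon_s(z-z')|\leq 4L_1\varepsilon$, and similarly for $g$. A second application of Jensen's inequality to the pointwise bound of the preceding paragraph gives $a^\varepsilon_s+2G(c^\varepsilon_s)\leq -\mu$.

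Finally, to verify $a^\varepsilon_s(Y_s,Y'_s,Z_s)\in M^2_G(0,T)$ when $Y,Y',Z\in M^2_G(0,T)$: for each fixed $(y,y',z)$ the process $s\mapsto a^\varepsilon_s(y,y',z)$ lies in $M^{2+\beta}_G(0,T)$ because $a^\varepsilon$ depends linearly on $f$ through a compactly supported integral in the $(y,z)$-parameters, and $f(\cdot,\cdot,y'',z'')\in M^{2+\beta}_G(0,n)$ by (H1). Since $a^\varepsilon$ is additionally bounded and Lipschitz in $(y,y',z)$ uniformly in $(s,\omega)$, one approximates $(Y,Y',Z)$ by simple adapted step processes (on which the composition is manifestly in $M^2_G$) and passes to the limit using the Lipschitz estimate. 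The three remaining coefficients are handled identically. I expect the most delicate point to be exactly this final compatibility check in the $G$-framework, where general Borel compositions can fail to preserve $M^2_G$; the continuous mollification construction was chosen precisely to bypass that obstacle.
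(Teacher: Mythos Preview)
Your argument is correct, and it reaches the same conclusions as the paper, but the construction is genuinely different.

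The paper does not mollify. It defines the coefficients directly as truncated difference quotients: with the continuous cutoff $l(y,y',z)=\mathbf{1}_{|y-y'|\geq\varepsilon}+\tfrac{|y-y'|}{\varepsilon}\mathbf{1}_{|y-y'|<\varepsilon}$, it sets
\[
a^\varepsilon_s=l\cdot\frac{f(s,y,z)-f(s,y',z)}{y-y'}-\frac{\mu}{1+\underline{\sigma}^2}(1-l),\qquad
c^\varepsilon_s=l\cdot\frac{g(s,y,z)-g(s,y',z)}{y-y'}-\frac{\mu}{1+\underline{\sigma}^2}(1-l),
\]
and analogously for $b^\varepsilon,d^\varepsilon$. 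The dissipativity $a^\varepsilon+2G(c^\varepsilon)\leq-\mu$ is then read off as a convex combination of (H3) (on the $l$-part) and the explicit constant $-\tfrac{\mu}{1+\underline{\sigma}^2}+2G(-\tfrac{\mu}{1+\underline{\sigma}^2})\leq-\mu$ (on the $(1-l)$-part), using only subadditivity and positive homogeneity of $G$. The $4L_1\varepsilon$ error comes from the Lipschitz bound on the region $|y-y'|<\varepsilon$. Continuity in $(y,y',z)$ suffices to land in $M^2_G$ after composition.

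Your mollification route trades this explicit cutoff for two applications of Jensen's inequality (once to push (H3) through the convolution, once to average along the segment), and produces coefficients that are Lipschitz rather than merely continuous in $(y,y',z)$ --- a mild regularity bonus. The paper's construction is more elementary and avoids any smoothness machinery, while yours is perhaps more systematic and would generalise cleanly to higher-dimensional $z$ without new ideas. Both yield the same uniform bound $|a^\varepsilon|,|b^\varepsilon|,|c^\varepsilon|,|d^\varepsilon|\leq L_1$ and the same $4L_1\varepsilon$ residual. One small point: in your final paragraph, the Lipschitz constant of $a^\varepsilon$ in $(y,y',z)$ depends on $\varepsilon$ (it involves $\partial_{yy}f^\varepsilon\sim L_1/\varepsilon$), but since $\varepsilon$ is fixed this is harmless for the $M^2_G$-approximation argument you outline.
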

\begin{proof}
Denote: \begin{align*}
&a^{\varepsilon}_s(y,y^{\prime},z):=l(y,y^{\prime},z)\frac{f(s,y,z)-f(s,y^{\prime},z)}{y-y^{\prime}}-\frac{\mu}{1+\underline{\sigma}^2}(1-l(y,y^{\prime},z)),\\
&c^{\varepsilon}_s(y,y^{\prime},z):=l(y,y^{\prime},z)\frac{g(s,y,z)-g(s,y^{\prime},z)}{y-y^{\prime}}-\frac{\mu}{1+\underline{\sigma}^2}(1-l(y,y^{\prime},z)),
\end{align*}
where $ l(y,y^{\prime},z)=\mathbf{1}_{|y-y^{\prime}|\geq \varepsilon}+ \frac{|y-y^{\prime}|}{\varepsilon}\mathbf{1}_{|y-y^{\prime}|< \varepsilon}$. It is obvious that
$a^{\varepsilon}_s(y,y^{\prime},z), c^{\varepsilon}_s(y,y^{\prime},z)$ are continuous functions in $(y,y^{\prime},z)$. Thus for each $T>0$, we conclude that $a^{\varepsilon}_s(Y_s,Y^{\prime}_s,Z_s)$, $c^{\varepsilon}_s(Y_s,Y^{\prime}_s,Z_s)$ are in $ M^2_G(0,T)$ for each  $Y,Y^{\prime},Z\in M^2_G(0,T)$.

From assumption (H3), we obtain that
\begin{align*}
a^{\varepsilon}_s(y,y^{\prime},z)+2G(c^{\varepsilon}_s(y,y^{\prime},z))
\leq& l(y,y^{\prime},z)(\frac{f(s,y,z)-f(s,y^{\prime},z)}{y-y^{\prime}}+2G(\frac{g(s,y,z)-g(s,y^{\prime},z)}{y-y^{\prime}}))\\&
+(1-l(y,y^{\prime},z))(-\frac{\mu}{1+\underline{\sigma}^2}+2G(-\frac{\mu}{1+\underline{\sigma}^2}))\\
\leq & -\mu.
\end{align*}
Note that $|a^{\varepsilon}_s|\leq L_1$ . Then by assumption (H2), we also derive that
\begin{align*}
&|f(s,y,z)-f(s,y^{\prime},z)-a^{\varepsilon}_s(y,y^{\prime},z)(y-y^{\prime})|\\&
\leq |f(s,y,z)-f(s,y^{\prime},z)|\mathbf{1}_{|y-y^{\prime}|< \varepsilon}+|a^{\varepsilon}_s(y,y^{\prime},z)(y-y^{\prime})|\mathbf{1}_{|y-y^{\prime}|< \varepsilon}\\
&\leq 2L_1|y-y^{\prime}|\mathbf{1}_{|y-y^{\prime}|< \varepsilon}\leq 2L_1\varepsilon.
\end{align*}
Finally, we set
\begin{align*}
&b^{\varepsilon}_s(z,z^{\prime},y^{\prime}):=l(z,z^{\prime},y^{\prime})\frac{f(s,y^{\prime},z)-f(s,y^{\prime},z^{\prime})}{z-z^{\prime}}
+L_1(1-l(z,z^{\prime},y^{\prime})),\\
&d^{\varepsilon}_s(z,z^{\prime},y^{\prime}):=l(z,z^{\prime},y^{\prime})\frac{g(s,y^{\prime},z)-g(s,y^{\prime},z^{\prime})}{z-z^{\prime}}
+L_1(1-l(z,z^{\prime},y^{\prime})).
\end{align*}
One can easily check that the last two inequalities also hold true.
\end{proof}

Now we  state the main result of this section, concerning the existence and uniqueness of
solutions of BSDE \eqref{HM}.
\begin{theorem}\label{HM3}
Let assumptions  \emph{(H1)-(H4)} hold. Then the $G$-BSDE \eqref{HM} has a unique solution $(Y, Z, K)$ belonging to $\mathfrak{S}_{G}^{2}(0,\infty)$  such that $Y$
is a bounded process.
\end{theorem}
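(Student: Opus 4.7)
The plan is to prove existence by approximating with finite-horizon $G$-BSDEs and uniqueness via a linearization-plus-representation argument; in both cases the exponential dissipation produced by (H3) is the engine, and Lemma~\ref{HW4} is the tool that converts nonlinear monotone questions into linear ones to which Lemma~\ref{the5.2} applies.

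\textbf{Existence and uniform bound.} For each $n\ge 1$, let $(Y^n,Z^n,K^n)\in\mathfrak S^2_G(0,n)$ solve \eqref{HM} on $[0,n]$ with terminal value $Y^n_n=0$; this is provided by Theorem~\ref{the4.1}. Applying Lemma~\ref{HW4} with $(y',z')=(0,0)$ rewrites $Y^n$ as the solution of a linear $G$-BSDE on $[0,n]$ with bounded coefficients $a^\varepsilon,b^\varepsilon,c^\varepsilon,d^\varepsilon$ satisfying $a^\varepsilon_s+2G(c^\varepsilon_s)\le -\mu$ and inhomogeneous terms $f(s,0,0)+\rho^\varepsilon_s$, $g(s,0,0)+\tilde\rho^\varepsilon_s$ with $|\rho^\varepsilon_s|,|\tilde\rho^\varepsilon_s|\le 4L_1\varepsilon$. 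By Lemma~\ref{the5.2},
\begin{align*}
Y^n_t=X_t^{-1}\hat{\mathbb E}_t^{\tilde G}\!\left[\int_t^n (f(s,0,0)+\rho^\varepsilon_s)X_s\,ds+\int_t^n (g(s,0,0)+\tilde\rho^\varepsilon_s)X_s\,d\langle B\rangle_s\right].
\end{align*}
The decisive observation is that $a^\varepsilon_s+2G(c^\varepsilon_s)\le -\mu$ combined with $\underline\sigma^2\le d\langle B\rangle_s/ds\le\bar\sigma^2$ q.s.\ gives the pathwise inequality $\int_t^s a^\varepsilon_u\,du+\int_t^s c^\varepsilon_u\,d\langle B\rangle_u\le -\mu(s-t)$; combined with the $\tilde G$-martingale structure built into \eqref{LSDE3} (the cross-variation $d\langle B,\tilde B\rangle_s=ds$ compensating the $-b^\varepsilon d^\varepsilon$ term in the exponent), this yields exponential decay in $\tilde G$-expectation, producing $|Y^n_t|\le L_2/\mu+ C\varepsilon$; sending $\varepsilon\to 0$ gives the uniform bound $\|Y^n\|_\infty\le L_2/\mu$.

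\textbf{Cauchy and limit.} For $m\ge n$, apply Lemma~\ref{HW4} to the differences $f(s,Y^m,Z^m)-f(s,Y^n,Z^n)$ and $g(s,Y^m,Z^m)-g(s,Y^n,Z^n)$. The linear $G$-BSDE satisfied by $\hat Y=Y^m-Y^n$ on $[0,n]$ does \emph{not} have $\hat K=K^m-K^n$ as a decreasing $G$-martingale, so Lemma~\ref{the5.2} is not directly applicable. The workaround is to view $Y^m$ alone as solving a linear $G$-BSDE on $[0,n]$ (with inhomogeneous term built from $f(\cdot,Y^n,Z^n)$, $g(\cdot,Y^n,Z^n)$ and the $\varepsilon$-errors), represent $Y^m$ via Lemma~\ref{the5.2}, and subtract an analogous representation of $Y^n$. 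The same exponential bound on $X_s/X_t$ together with $Y^n_n=0$ and $|Y^m_n|\le L_2/\mu$ produces $|Y^m_t-Y^n_t|\le (L_2/\mu)e^{-\mu(n-t)}+ C\varepsilon$, so $(Y^n_t)$ is q.s.\ Cauchy uniformly on compacts, with a bounded limit $Y$. On each fixed $[0,T]$, the a priori estimate Theorem~\ref{pro3.5} applied to the restrictions of $Y^m,Y^n$ (with terminal data now controlled by the previous step) yields Cauchyness of $(Z^n)$ in $M^2_G(0,T)$ and $(K^n_T)$ in $L^2_G(\Omega_T)$; the limit $(Y,Z,K)\in\mathfrak S^2_G(0,\infty)$ satisfies \eqref{HM}.

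\textbf{Uniqueness and main obstacle.} For uniqueness of two bounded solutions $(Y^1,Z^1,K^1),(Y^2,Z^2,K^2)$, the same linearize-and-represent-each-individually argument applied on $[0,T]$ gives $|Y^1_t-Y^2_t|\le e^{-\mu(T-t)}\|Y^1_T-Y^2_T\|_\infty+ C\varepsilon$. Uniform boundedness of $Y^1,Y^2$ makes the right-hand side go to $C\varepsilon$ as $T\to\infty$, and then $\varepsilon\to 0$ yields $Y^1=Y^2$ q.s.; uniqueness of $Z$ and $K$ on each $[0,T]$ follows from Theorem~\ref{pro3.5}. The main obstacle throughout is precisely that differences of solutions produce $\hat K$'s that are not decreasing $G$-martingales, so Lemma~\ref{the5.2} cannot be applied to the difference equation directly; the resolution is why Lemma~\ref{HW4} was crafted to give coefficients depending on a pair of processes $(y,y',z)$, so that each solution (rather than their difference) can be represented and then compared.
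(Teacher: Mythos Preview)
Your strategy is correct and matches the paper's in its ingredients (finite-horizon approximation, Lemma~\ref{HW4} linearization, Lemma~\ref{the5.2} representation, exponential decay from (H3)), and the resulting bounds $|Y^n_t|\le L_2/\mu$ and $|Y^m_t-Y^n_t|\le (L_2/\mu)e^{-\mu(n-t)}$ are exactly what the paper obtains. The one point where you diverge is the handling of the ``main obstacle,'' and here the paper's trick is different and somewhat cleaner than yours.

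You propose to represent $Y^1$ and $Y^2$ \emph{separately} via Lemma~\ref{the5.2} (using the common linearization coefficients produced by Lemma~\ref{HW4}) and then subtract, relying on the sublinearity $\hat{\mathbb E}^{\tilde G}_t[A]-\hat{\mathbb E}^{\tilde G}_t[B]\le\hat{\mathbb E}^{\tilde G}_t[A-B]$. This is valid, but it makes the inhomogeneous terms in each representation carry a $f(s,Y^2_s,Z^2_s)-a^\varepsilon_s Y^2_s-b^\varepsilon_s Z^2_s$ piece (involving the unbounded $Z^2$), and one must check this stays in $M^2_G$. The paper instead writes the \emph{single} equation
\[
\hat Y_t + K^2_t = \hat Y_T + K^2_T + \int_t^T\hat f_s\,ds + \int_t^T\hat g_s\,d\langle B\rangle_s - \int_t^T\hat Z_s\,dB_s - (K^1_T-K^1_t),
\]
so that the ``$Y$'' component is $\hat Y+K^2$ and the decreasing $G$-martingale is just $K^1$. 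After linearizing $\hat f_s=a^\varepsilon_s\hat Y_s+b^\varepsilon_s\hat Z_s-m^\varepsilon_s=a^\varepsilon_s(\hat Y_s+K^2_s)+b^\varepsilon_s\hat Z_s-m^\varepsilon_s-a^\varepsilon_s K^2_s$ (and similarly for $\hat g$), Lemma~\ref{the5.2} yields a representation in which the $K^2$-dependent part is exactly $(X^\varepsilon_t)^{-1}\hat{\mathbb E}^{\tilde G}_t[X^\varepsilon_T K^2_T-\int_t^T a^\varepsilon_s K^2_s X^\varepsilon_s\,ds-\int_t^T c^\varepsilon_s K^2_s X^\varepsilon_s\,d\langle B\rangle_s]$, and this is identified as $K^2_t$ by the \emph{second} assertion of Lemma~\ref{the5.2}. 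Subadditivity of $\hat{\mathbb E}^{\tilde G}_t$ then splits off $K^2_t$ on both sides and leaves precisely your one-sided estimate on $\hat Y_t$. So the paper uses the full content of Lemma~\ref{the5.2} (both displayed identities) rather than two separate first-identity applications; the payoff is that the inhomogeneous terms stay bounded ($|m^\varepsilon_s|,|n^\varepsilon_s|\le 4L_1\varepsilon$), which makes the subsequent $\varepsilon\to0$ step immediate. Your route works, but the paper's is the intended and tidier one.
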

\begin{proof}
{\bf Uniqueness:} Suppose that $(Y^1, Z^1,K^1)$ and $(Y^2, Z^2,K^2)$
are both solutions of the $G$-BSDE \eqref{HM}.
Set $(\hat{Y},\hat{Z})=(Y^1-Y^2,Z^1-Z^2)$. Since both $Y^1$ and $Y^2$ are bounded continuous processes, we can find  some  constant
$C>0$ such that $|\hat{Y}|\leq C$.
Then we have for any $T>0$,%
\[
\hat{Y}_{t}+K_{t}^{2}=\hat{Y}_T+K_{T}^{2}+\int_{t}^{T}\hat{f}_{s}ds+\int%
_{t}^{T}\hat{g}_{s}d\langle B\rangle_{s}-\int_{t}^{T}\hat{Z}_{s}dB_{s}%
-(K_{T}^{1}-K_{t}^{1}),
\]
where $\hat{f}_{s}=f(s,Y_{s}^{1},Z_{s}%
^{1})-f(s,Y_{s}^{2},Z_{s}^{2})$, $\hat{g}_{s}=g(s,Y_{s}^{1},Z_{s}%
^{1})-g(s,Y_{s}^{2},Z_{s}^{2})$.
From Lemma \ref{HW4}, for each given $\varepsilon>0$, we set $a^{\varepsilon}_s:=a^{\varepsilon}_s(Y^1_s,Y^2_s,Z^1_s)$.
Thus \[
f(s,Y_{s}^{1},Z_{s}%
^{1})-f(s,Y_{s}^{2},Z_{s}^{1})=a^{\varepsilon}_s\hat{Y}_s+f(s,Y_{s}^{1},Z_{s}%
^{1})-f(s,Y_{s}^{2},Z_{s}^{1})-a^{\varepsilon}_s\hat{Y}_s.
\]
Moreover, we can get $a^{\varepsilon}_s\in M_{G}^{2}(0,T)$.
Similarly, we can define $b_{s}^{\varepsilon},c_{s}^{\varepsilon}$ and $d_{s}^{\varepsilon}$.
Consequently,
\[
\hat{f}_{s}=a_{s}^{\varepsilon}\hat{Y}_{s}+b_{s}^{\varepsilon}\hat{Z}%
_{s}-m_{s}^{\varepsilon},\ \hat{g}_{s}=c_{s}^{\varepsilon}\hat{Y}%
_{s}+d_{s}^{\varepsilon}\hat{Z}_{s}-n_{s}^{\varepsilon},
\]
where $|m_{s}^{\varepsilon}|:=|\hat{f}_{s}-a_{s}^{\varepsilon}\hat{Y}_{s}-b_{s}^{\varepsilon}\hat{Z}%
_{s}|\leq4L_1\varepsilon$ and  $|n_{s}^{\varepsilon}%
|:=|\hat{g}_{s}-c_{s}^{\varepsilon}\hat{Y}_{s}-d_{s}^{\varepsilon}\hat{Z}%
_{s}|\leq4L_1\varepsilon$. Recalling Lemma \ref{my9} and letting $\varepsilon\rightarrow0$, we deduce that
\[
\hat{Y}_{t}\leq C\exp(-\mu(T-t)), \ \ \forall T>0, \ \ q.s..
\]
Therefore by sending $T$ to infinity yields that
$\forall t\geq 0$, $Y^1_t\leq Y^2_t,$ q.s.. By a similar analysis, we also have $Y^2_t\leq Y^1_t$, q.s..
Thus it follows from the continuity of $Y^1$ and $Y^2$ that $Y^1 =Y^2$, q.s..
Then recalling the uniqueness of solution to $G$-BSDE in finite horizon, we can also get
the uniqueness of $(Z,K)$, which is the desired result.

{\bf Existence:}
Denote by $(Y^n , Z^n, K^n)\in \mathfrak{S}_{G}^{2}(0,n)$ the unique solution of the
following $G$-BSDE in finite horizon:
\begin{align*}
Y^n_t=\int^n_tf(s,Y^n_s,Z^n_s)ds+\int^n_tg(s,Y^n_s,Z^n_s)d\langle B\rangle_s-\int^n_tZ^n_sdB_s-(K_n^n-K^n_t), \ \ 0\leq t\leq n.
\end{align*}
Using the same method as in the proof of uniqueness, we have
\[
Y^n_t=\int_{t}^{n}(f(s,0,0)+{f}_{s})ds+\int%
_{t}^{n}(g(s,0,0)+{g}_{s})d\langle B\rangle_{s}-\int_{t}^{n}{Z}^n_{s}dB_{s}%
-(K^n_{n}-K_{t}^n),
\]
where ${f}_{s}=f(s,Y_{s}^{n},Z_{s}%
^{n})-f(s,0,0)$, ${g}_{s}=g(s,Y_{s}^{n},Z_{s}%
^{n})-g(s,0,0)$. Then  for each $\varepsilon>0$, we can get
\[
{f}_{s}=a_{s}^{n,\varepsilon}Y^n_{s}+b_{s}^{n,\varepsilon}Z^n
_{s}-m_{s}^{n,\varepsilon},\ {g}_{s}=c_{s}^{n,\varepsilon}Y^n
_{s}+d_{s}^{n,\varepsilon}Z^n_{s}-n_{s}^{n,\varepsilon},
\]
where $|m_{s}^{n,\varepsilon}|\leq4L_1\varepsilon$ and  $|n_{s}^{n,\varepsilon}%
|\leq4L_1\varepsilon$.
By  Lemma \ref{my9}, we derive that
\begin{align*}
|Y^n_{t}|\leq \frac{L_2}{\mu}+4(1+\bar{\sigma}^2)\varepsilon\frac{L_1}{\mu}, \ \ q.s..
\end{align*}
Then letting $\varepsilon\rightarrow0$,
we can obtain that
\begin{align}\label{HM2}
|Y^n_{t}|\leq \frac{L_2}{\mu}, \ \ q.s..
\end{align}

Now we define $Y^n$, $Z^n$ and $K^n$ on the
whole time axis by setting
\[
Y^n_t=Z^n_t=0,\ K^n_t=K^n_n, \ \ \forall t>n.
\]

Fix $t\leq n\leq m$ and set $\tilde{Y} =Y^m-Y^n$, $\tilde{Z}=Z^m-Z^n$. As in the proof of uniqueness, we use the same kind of linearization.
Thus
\[
\tilde{Y}_{t}+K_{t}^{m}=K_{m}^{m}+\int_{t}^{m}\tilde{f}_{s}ds+\int%
_{t}^{m}\tilde{g}_{s}d\langle B\rangle_{s}-\int_{t}^{T}\tilde{Z}_{s}dB_{s}%
-(K_{m}^{n}-K_{t}^{n}),
\]
where $\tilde{f}_{s}=f(s,Y_{s}^{m},Z_{s}%
^{m})-f(s,Y_{s}^{n},Z_{s}^{n})+\mathbf{1}_{s>n}f(s,0,0)$, $\tilde{g}_{s}=g(s,Y_{s}^{m},Z_{s}%
^{m})-g(s,Y_{s}^{n},Z_{s}^{n})+\mathbf{1}_{s>n}g(s,0,0)$.
Then for each given $\varepsilon>0$, we have
\[
\tilde{f}_{s}=a_{s}^{m,n,\varepsilon}\tilde{Y}_{s}+b_{s}^{m,n,\varepsilon}\tilde{Z}%
_{s}-m_{s}^{m,n,\varepsilon}+\mathbf{1}_{s>n}f(s,0,0),\ \hat{g}_{s}=c_{s}^{m,n,\varepsilon}\tilde{Y}%
_{s}+d_{s}^{m,n,\varepsilon}\tilde{Z}_{s}-n_{s}^{m,n,\varepsilon}+\mathbf{1}_{s>n}g(s,0,0),
\]
where $|m_{s}^{m,n,\varepsilon}|\leq4L_1\varepsilon$ and  $|n_{s}^{m,n,\varepsilon}%
|\leq4L_1\varepsilon$. Therefore using the same strategy implies that
\begin{align}\label{HM1}
|\tilde{Y}_{t}|\leq \frac{L_2}{\mu}\exp(\mu t)(\exp(-\mu n)-\exp(-\mu m)), \ \ q.s..
\end{align}
Thus, we get for each $0<T\leq n\leq m$,
\[
\lim\limits_{m,n\rightarrow\infty}\mathbb{\hat{E}}[\sup\limits_{t\in[0,T]}|Y^n_t-Y^m_t|^2]=0.
\]
Consider the following $G$-BSDE in finite horizon $[0,T]$:
\[
Y^n_t=Y^n_T+\int^T_tf(s,Y^n_s,Z^n_s)ds+\int^T_tg(s,Y^n_s,Z^n_s)d\langle B\rangle_s-\int^T_tZ^n_sdB_s-(K_T^n-K^n_t).
\]
By  Theorem \ref{pro3.5},  we also conclude that
 \[
 \lim\limits_{m,n\rightarrow\infty}\|Z^n-Z^m\|_{M_G^2(0,T)}=0.
 \]
Consequently, there exist  two processes $(Y,Z)\in\mathcal{S}_G^2(0,\infty)\times M_G^2(0,\infty)$ such that
\[
\lim\limits_{n\rightarrow\infty}\mathbb{\hat{E}}[\sup\limits_{t\in[0,T]}|Y^n_t-Y_t|^2+\int^T_0|Z^n_t-Z_t|^2dt]=0.
\]
Moreover,  from equations \eqref{HM2} and \eqref{HM1}, we get that $|Y_t|\leq \frac{L_2}{\mu}$ and $|Y^n_t-Y_t|\leq \frac{L_2}{\mu}\exp(-\mu (n-t))$, q.s..

Denote
\[
K_t:=Y_t-Y_0+\int^t_0f(s,Y_s,Z_s)ds+\int^t_0g(s,Y_s,Z_s)d\langle B\rangle_s-\int^t_0Z_sdB_s.
\]
 Then we have $\hat{\mathbb{E}}[|K_{t}-{K}_{t}^{n}|^{2}]\rightarrow0$.
Moreover, $K$ is a $G$-martingale. Indeed,
for each $%
0\leq t<s$,
\begin{align*}
\hat{\mathbb{E}}[|\hat{\mathbb{E}}_{t}[K_{s}]-K_{t}|] & =\hat{\mathbb{E}}[|%
\mathbb{\hat{E}}_{t}[K_{s}]-\mathbb{\hat{E}}_{t}[{K}_{s}^{n}]+{K}%
_{t}^{n}-K_{t}|] \\
& \leq \hat{\mathbb{E}}[\mathbb{\hat{E}}_{t}[|K_{s}-{K}_{s}^{n}|]]+%
\hat{\mathbb{E}}[|{K}_{t}^{n}-K_{t}|] \\
& =\hat{\mathbb{E}}[|K_{s}-{K}_{s}^{n}|]+\hat{\mathbb{E}}[|{K}%
_{t}^{n}-K_{t}|]\rightarrow0.
\end{align*}
Thus we get $\mathbb{\hat{E}}_{t}[K_{s}]=K_{t}$,
which completes the proof.
\end{proof}
\begin{remark}{\upshape
The main difficulty to prove Theorem \ref{HM3} is the explicit solutions of linear $G$-BSDEs, which is different from the linear case.
Then  in the above proof we  introduce a new version of
linearization method  to
obtain the existence and uniqueness of $G$-BSDE with infinite horizon.
In particular, it also provides a new prior estimate for $G$-BSDEs (see equation \eqref{HM2}).}
\end{remark}

By the same way as in the proof of Theorem \ref{HM3}, we  also have the following comparison theorem.

\begin{theorem}[Comparison Theorem]\label{HM7}
Let $(Y^i,Z^i,K^i)$, $i=1,2$ be the solution of BSDE \eqref{HM} with generators $f^i$ and $g^i$ such that $Y^i$ is a bounded process. Moreover $f^i$ and $g^i$ satisfy assumptions
\emph{(H1)-(H4)}.
If  $f^1(s,Y^i_s,Z^i_s)-f^2(s,Y^i_s,Z^i_s)+2G(g^1(s,Y^i_s,Z^i_s)-g^2(s,Y^i_s,Z^i_s))\leq 0$ for some $i$, q.s.,  then for each $t$,
$Y^1_t\leq Y^2_t,$ q.s..
\end{theorem}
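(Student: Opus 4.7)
The plan is to mimic the uniqueness argument of Theorem \ref{HM3}, treating the discrepancy between the two generators as a perturbation that the hypothesis forces to act non-positively.

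Write $\hat{Y}=Y^1-Y^2$, $\hat{Z}=Z^1-Z^2$, and suppose without loss of generality that the hypothesis holds at $i=2$ (the case $i=1$ is entirely analogous, after swapping the roles of $f^1,g^1$ with $f^2,g^2$ in the linearization below). For $0\leq t\leq T<\infty$,
\begin{align*}
\hat{Y}_t+K^2_t=\hat{Y}_T+K^2_T+\int_t^T\hat{f}_s\,ds+\int_t^T\hat{g}_s\,d\langle B\rangle_s-\int_t^T\hat{Z}_s\,dB_s-(K^1_T-K^1_t).
\end{align*}
Decompose $\hat{f}_s=[f^1(s,Y^1_s,Z^1_s)-f^1(s,Y^2_s,Z^2_s)]+\alpha_s$ with $\alpha_s:=f^1(s,Y^2_s,Z^2_s)-f^2(s,Y^2_s,Z^2_s)$, and analogously $\hat{g}_s=[g^1(s,Y^1_s,Z^1_s)-g^1(s,Y^2_s,Z^2_s)]+\beta_s$; by assumption $\alpha_s+2G(\beta_s)\leq 0$ q.s. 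Lemma \ref{HW4} applied to $f^1,g^1$ (which still fulfill (H3)) yields bounded $a^\varepsilon_s,b^\varepsilon_s,c^\varepsilon_s,d^\varepsilon_s$ with $a^\varepsilon_s+2G(c^\varepsilon_s)\leq-\mu$ and residuals $|m^\varepsilon_s|,|n^\varepsilon_s|\leq 4L_1\varepsilon$.

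Next, apply Lemma \ref{the5.2} to the resulting linear $G$-BSDE for $\hat{Y}+K^2$ and, exactly as in the uniqueness proof culminating in \eqref{MW1}, use sub-additivity of $\hat{\mathbb{E}}^{\tilde G}$ to absorb the $K^2$-terms back to $K^2_t$. Since $K^1$ is a decreasing $G$-martingale its contribution may be dropped, leaving
\begin{align*}
\hat{Y}_t\leq (X^\varepsilon_t)^{-1}\hat{\mathbb{E}}^{\tilde G}_t\Bigl[X^\varepsilon_T\hat{Y}_T+\int_t^T(\alpha_s-m^\varepsilon_s)X^\varepsilon_s\,ds+\int_t^T(\beta_s-n^\varepsilon_s)X^\varepsilon_s\,d\langle B\rangle_s\Bigr],
\end{align*}
with $X^\varepsilon$ solving the $\tilde{G}$-SDE \eqref{HW5}. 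The crucial new step uses the pathwise inequality $\eta\,d\langle B\rangle_s\leq 2G(\eta)\,ds$ (valid for every scalar $\eta$, since $d\langle B\rangle_s/ds\in[\underline{\sigma}^2,\bar{\sigma}^2]$ q.s.) together with $X^\varepsilon\geq 0$ and the hypothesis to obtain
\begin{align*}
\int_t^T\alpha_sX^\varepsilon_s\,ds+\int_t^T\beta_sX^\varepsilon_s\,d\langle B\rangle_s\leq\int_t^T(\alpha_s+2G(\beta_s))X^\varepsilon_s\,ds\leq 0\quad\text{q.s.},
\end{align*}
so that by monotonicity of $\hat{\mathbb{E}}^{\tilde G}$ this contribution can be discarded.

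Combining with sub-additivity, the decay estimate \eqref{HW6}, the bounds on $m^\varepsilon,n^\varepsilon$, and the uniform bound $|\hat{Y}|\leq C$ (both $Y^1,Y^2$ bounded), one reaches
\begin{align*}
\hat{Y}_t\leq Ce^{-\mu(T-t)}+\frac{4L_1(1+\bar{\sigma}^2)}{\mu}\varepsilon.
\end{align*}
Sending $T\to\infty$ and then $\varepsilon\to 0$ gives $\hat{Y}_t\leq 0$ q.s., i.e.\ $Y^1_t\leq Y^2_t$ q.s. The main subtlety is the middle step, where combining the hypothesis $\alpha_s+2G(\beta_s)\leq 0$ with the q.s.\ domination $\eta\,d\langle B\rangle_s\leq 2G(\eta)\,ds$ is precisely what forces the perturbation produced by the two different generators to contribute non-positively inside the linearized $\tilde{G}$-conditional expectation.
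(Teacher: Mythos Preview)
Your proof is correct and follows essentially the same approach as the paper's own proof: linearize via Lemma~\ref{HW4}, apply the explicit representation Lemma~\ref{the5.2}, and then use the pathwise inequality $\beta_sX^\varepsilon_s\,d\langle B\rangle_s\le 2G(\beta_s)X^\varepsilon_s\,ds$ (together with $X^\varepsilon\ge 0$) so that the hypothesis $\alpha_s+2G(\beta_s)\le 0$ kills the cross-generator contribution. The only cosmetic difference is that the paper writes this step as an add/subtract of $2G(n_s)X^\varepsilon_s\,ds$ inside the $\hat{\mathbb{E}}^{\tilde G}$, while you phrase it as a direct pathwise bound before invoking monotonicity; these are the same manipulation.
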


\section{Fully nonlinear Feynman-Kac formula for  elliptic PDEs}
In this section, we shall give the fully nonlinear Feynman-Kac Formula for $G$-BSDEs with infinite horizon. Let  $B_{t}=(B_{t}^{i})_{i=1}^{d}$ be the corresponding $d$-dimensional $G$-Brownian motion.
Consider the following type of $G$-FBSDEs with infinite horizon:%
\begin{align} \label{App1}
\begin{cases}
&X_{s}^{x}=x+\int^s_0b(X_{r}^{x})dr+\int^s_0h_{ij}(X_{r}^{x})d\langle
B^i,B^j\rangle_{r}+\int^s_0\sigma(X_{r}^{x})dB_{r},\\&
Y_{s}^{x}   =Y_{T}^{x}+\int_{s}^{T}f(X_{r}^{x}%
,Y_{r}^{x},Z_{r}^{x})dr+\int_{s}^{T}g_{ij}(X_{r}^{x}%
,Y_{r}^{x},Z_{r}^{x})d\langle B^i,B^j\rangle_{r}\\
& \ \ \ \ \ \ \ \ \ \ \ \  -\int_{s}^{T}Z_{r}^{x}dB_{r}-(K_{T}^{x}-K_{s}^{x}),
\end{cases}
\end{align}
where $b$, $h_{ij}:\mathbb{R}^{n}\rightarrow
\mathbb{R}^{n}$, $\sigma:\mathbb{R}^{n}\rightarrow
\mathbb{R}^{n\times d}$, $f$, $g_{ij}:$
$\mathbb{R}^{n}\times\mathbb{R}\times\mathbb{R}^{d}\rightarrow
\mathbb{R}$ are deterministic continuous functions. Consider also the following assumptions:
\begin{description}
\item[(B1)] $h_{ij}=h_{ji}$ and $g_{ij}=g_{ji}$ for $1\leq i,j\leq d$, $|f(x,0,0)|+2G(|g_{ij}(x,0,0)|)$ is bounded by some constant $\alpha$;
\item[(B2)] There exist some constants $L,\alpha_1>0$ and $\alpha_2>0$ such that%
\begin{align*}
&|b(x)-b(x^{\prime})|+\sum\limits_{i,j}|h_{ij}(x)-h_{ij}(x^{\prime
})|\leq
L|x-x^{\prime}|, \ \ |\sigma(x)-\sigma(x^{\prime})|\leq
\alpha_1|x-x^{\prime}|,\\
&  |f(x,y,z)-f(x^{\prime},y^{\prime},z^{\prime})|+
\sum\limits_{i,j}|g_{ij}(x,y,z)-g_{ij}(x^{\prime},y^{\prime},z^{\prime})|\\
&  \leq L(|x-x^{\prime}|+|y-y^{\prime
}|)+\alpha_2|z-z^{\prime}|.
\end{align*}
\item[(B3)] There exists a constant $\mu>0$  such that $(f(x,y,z)-f(x,y^{\prime},z))(y-y^{\prime})+2G((g_{ij}(x,y,z)-g_{ij}(x,y^{\prime},z))(y-y^{\prime}))\leq
-\mu|y-y^{\prime}|^2 $.
\item[(B4)]
$G(\sum\limits_{i=1}^n(\sigma_i(x)-\sigma_i(x^{\prime}))^T(\sigma_i(x)-\sigma_i(x^{\prime}))+2(\langle x-x^{\prime},h_{ij}(x)-h_{ij}(x^{\prime})\rangle)_{i,j=1}^d )+\langle x-x^{\prime},b(x)-b(x^{\prime})\rangle\leq -\eta|x-x^{\prime}|^2$ for some constant $\eta> 0$, where $\sigma_i$ is the $i$-th row of $\sigma$.
\item[(B5)] $\eta-(1+\bar{\sigma}^2)\alpha_1\alpha_2>0$.
\end{description}
By  Theorem \ref{HM3}, there exists a unique  solution $(X^x, Y^x,Z^x,K^x)$ to $G$-FBSDEs \eqref{App1} under (B1)-(B3). The assumptions (B4) and (B5) are called strong dissipativity assumptions and they ensures the ergodicity of
the diffusion process $X$ in the linear case  (see  \cite{CF},  \cite{FH} and  \cite{R}).

The following result is important in our future discussion.
\begin{lemma}\label{HW2}
Assume  $\tilde{X}$  is the solution of the following $\tilde{G}$-SDE:%
\begin{equation*}
\tilde{X}_{t}=1+\int_{0}^{t}d_{s}\tilde{X}_{s}dB_{s}+\int_{0}^{t}b_{s}\tilde{X}_{s}d\tilde{B}_{s},
\end{equation*}
 where  $(b_{s})_{s\in\lbrack0,\infty)}$,  $(d_{s})_{s\in\lbrack0,\infty)}$ are
in $M_{G}^{2}(0,T)$ for any $T>0$ and bounded by $\alpha_2$. Then the following properties hold:
\begin{description}
\item[(i)] $\hat{\mathbb{E}}^{\tilde{G}}[|X^x_t-X^{x^{\prime}}_t|\tilde{X}_t]\leq\exp(-\eta t+(1+\bar{\sigma}^2)\alpha_1\alpha_2t)|x-x^{\prime}|$;
\item[(ii)] there exists a constant $\bar{C}$ depending on $G,\alpha_1$, $\alpha_2$ and $\eta$, such that
\[\hat{\mathbb{E}}^{\tilde{G}}[|X^x_t|\tilde{X}_t]\leq \bar{C}(1+|x|), \ \forall t>0.
\]
\end{description}
\end{lemma}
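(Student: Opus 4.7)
The key observation is that $\tilde{X}$ is a (symmetric) $\tilde{G}$-martingale with $\hat{\mathbb{E}}^{\tilde{G}}[\tilde{X}_{t}]=\tilde{X}_{0}=1$, so the natural quantity to analyze for (i) is $|X_{t}^{x}-X_{t}^{x^{\prime}}|^{2}\tilde{X}_{t}$; from its bound one then recovers the $|\Delta|$-bound via H\"{o}lder. I would apply $G$-It\^{o}'s formula to this product in the extended $\tilde{G}$-space and work under a generic $P$ in the set representing $\hat{\mathbb{E}}^{\tilde{G}}$, under which $d\langle B\rangle_{s}=v_{s}ds$, $d\langle\tilde{B}\rangle_{s}=v_{s}^{-1}ds$, $d\langle B,\tilde{B}\rangle_{s}=ds$ for some $v_{s}\in[\underline{\sigma}^{2},\bar{\sigma}^{2}]$.

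Setting $\Delta_{t}=X_{t}^{x}-X_{t}^{x^{\prime}}$, a direct It\^{o} computation (crucially using $d\langle B,\tilde{B}\rangle=dt$ to handle the cross-variation between the $\sigma$-term in $d|\Delta|^{2}$ and the $b_{t}d\tilde{B}$-term in $d\tilde{X}$) shows that the drift of $|\Delta_{t}|^{2}\tilde{X}_{t}$ under $P$ equals
\[
2\tilde{X}_{t}\Big\{\langle\Delta_{t},b(X_{t}^{x})-b(X_{t}^{x^{\prime}})\rangle+\tfrac{v_{t}}{2}\bigl[2\langle\Delta_{t},h_{11}(X_{t}^{x})-h_{11}(X_{t}^{x^{\prime}})\rangle+|\sigma(X_{t}^{x})-\sigma(X_{t}^{x^{\prime}})|^{2}\bigr]+\langle\Delta_{t},\sigma(X_{t}^{x})-\sigma(X_{t}^{x^{\prime}})\rangle(b_{t}+v_{t}d_{t})\Big\}.
\]
For the first two summands inside the braces I would combine the elementary inequality $(v/2)A\leq G(A)$ (valid for scalars $A$ and $v\in[\underline{\sigma}^{2},\bar{\sigma}^{2}]$) with (B4) to obtain the bound $-\eta|\Delta_{t}|^{2}$; for the cross term I would use the Lipschitz bound $|\sigma(X_{t}^{x})-\sigma(X_{t}^{x^{\prime}})|\leq\alpha_{1}|\Delta_{t}|$ together with $|b_{t}+v_{t}d_{t}|\leq\alpha_{2}+\bar{\sigma}^{2}\alpha_{2}=(1+\bar{\sigma}^{2})\alpha_{2}$. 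This yields the pointwise estimate $\text{drift}\leq 2\tilde{X}_{t}[-\eta+(1+\bar{\sigma}^{2})\alpha_{1}\alpha_{2}]|\Delta_{t}|^{2}$, which is strictly negative by (B5). Gronwall under each $P$ then gives $E_{P}[|\Delta_{t}|^{2}\tilde{X}_{t}]\leq|x-x^{\prime}|^{2}\exp(-2(\eta-(1+\bar{\sigma}^{2})\alpha_{1}\alpha_{2})t)$; taking the supremum over $P$ and invoking $\hat{\mathbb{E}}^{\tilde{G}}[|\Delta_{t}|\tilde{X}_{t}]\leq(\hat{\mathbb{E}}^{\tilde{G}}[|\Delta_{t}|^{2}\tilde{X}_{t}])^{1/2}(\hat{\mathbb{E}}^{\tilde{G}}[\tilde{X}_{t}])^{1/2}$ delivers (i).

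For (ii), I would first apply (i) with $x^{\prime}=0$ to bound $\hat{\mathbb{E}}^{\tilde{G}}[|X_{t}^{x}-X_{t}^{0}|\tilde{X}_{t}]\leq|x|$, and then obtain a uniform-in-$t$ bound on $\hat{\mathbb{E}}^{\tilde{G}}[|X_{t}^{0}|\tilde{X}_{t}]$ by repeating the analysis on $|X_{t}^{0}|^{2}\tilde{X}_{t}$. In this step the coefficients are not centered at a common point, so additional affine terms arise from the constants $b(0), h_{11}(0), \sigma(0)$; these are split off using (B4) applied with $x^{\prime}=0$ together with the sublinearity of $G$, and then absorbed by Young's inequality against the strictly dissipative leading term (whose sign is again secured by (B5)). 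Gronwall then yields $\hat{\mathbb{E}}^{\tilde{G}}[|X_{t}^{0}|^{2}\tilde{X}_{t}]\leq C$ uniformly in $t$, and a final H\"{o}lder together with the triangle inequality furnish the conclusion.

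The main technical obstacle is the careful bookkeeping of the three quadratic variations $d\langle B\rangle$, $d\langle\tilde{B}\rangle$ and $d\langle B,\tilde{B}\rangle=dt$ in the extended $\tilde{G}$-framework, and the recognition that the combination $b_{t}+v_{t}d_{t}$ (rather than $b_{t}$ and $v_{t}d_{t}$ treated separately) is what naturally appears in the cross-variation and is bounded by $\alpha_{2}(1+\bar{\sigma}^{2})$; this is precisely what makes (B5) the correct gap condition producing a strictly negative drift.
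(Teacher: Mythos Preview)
Your plan is correct and follows essentially the same route as the paper: apply It\^o's formula to $|\Delta_t|^2\tilde{X}_t$, combine (B4) with the sublinearity of $G$ to produce the $-\eta$ term, bound the cross-variation contributions by $(1+\bar{\sigma}^2)\alpha_1\alpha_2|\Delta|^2$, and recover the first-moment estimate via Cauchy--Schwarz against $\hat{\mathbb{E}}^{\tilde{G}}[\tilde{X}_t]=1$. The only presentational differences are that the paper works directly in the $\tilde{G}$-framework (using $\int\xi\,d\langle B\rangle-2\int G(\xi)\,ds\leq 0$ and the weight $\exp(2Ct)$) rather than fixing a $P$ and invoking Gronwall, and for (ii) the paper treats general $x$ in one step rather than splitting via (i) with $x'=0$; neither change is substantive.
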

\begin{proof}
Without loss of generality, assume $d=1$. It is obvious $\tilde{X}$ is a $\tilde{G}$-martingale.
Then
\[
\mathbb{\hat{E}}^{\tilde{G}}[|X^x_t-X^{x^{\prime}}_t|\tilde{X}_{t}]
\leq \mathbb{\hat{E}}^{\tilde{G}}[|X^x_t-X^{x^{\prime}}_t|^2\tilde{X}_{t}]^{\frac{1}{2}}\mathbb{\hat{E}}^{\tilde{G}}[\tilde{X}_{t}]^{\frac{1}{2}}
=\mathbb{\hat{E}}^{\tilde{G}}[|X^x_t-X^{x^{\prime}}_t|^2\tilde{X}_{t}]^{\frac{1}{2}}.
\]
Next we shall give the estimate of $|X^x_t-X^{x^{\prime}}_t|^2\tilde{X}_t$.
Set $\bar{X}_t=X^x_t-X^{x^{\prime}}_t$, $C=\eta-(1+\bar{\sigma}^2)\alpha_1\alpha_2$ and $\bar{\varphi}_s=\varphi(X^x_s)-\varphi(X^{x^{\prime}}_s)$ for $\varphi=b,h,\sigma$.
Applying the $G$-It\^{o} formula  yields that
\begin{align*}
&\exp(2Ct)|\bar{X}_t|^2\tilde{X}_t-|x-x^{\prime}|^2
\\&=
2C\int^t_0\exp(2Cs)|\bar{X}_s|^2\tilde{X}_sds+
2\int^t_0\exp(2Cs)\langle \bar{X}_t, \bar{b}_s\rangle\tilde{X}_s ds+
\int^t_0\exp(2Cs)\xi_s\tilde{X}_sd\langle B
\rangle_s
 \\&\ \ +M_t+2\int^t_0\exp(2Cs)\langle \bar{X}_s,\bar{\sigma}_s\rangle d_s\tilde{X}_sd\langle B\rangle_s +2\int^t_0\exp(2Cs)\langle\bar{X}_s, \bar{\sigma}_s\rangle b_s\tilde{X}_sds\\
&=
2C\int^t_0\exp(2Cs)|\bar{X}_s|^2\tilde{X}_sds+\Lambda_t^1+\Lambda_t^2+N_t+M_t,
\end{align*}
where
\begin{align*}
&\Lambda_t^1=2\int^t_0\exp(2Cs)[\langle \bar{X}_s,  \bar{b}_s\rangle+G(\xi_s)]\tilde{X}_s ds,\\
&\Lambda_t^2=2\int^t_0\exp(2Cs)\langle \bar{X}_s, \bar{\sigma}_s\rangle b_s\tilde{X}_sds
+2\int^t_0\exp(2Cs)\langle \bar{X}_s, \bar{\sigma}_s\rangle d_s\tilde{X}_sd\langle B\rangle_s,\\
&\xi_t=2\langle \bar{X}_t, \bar{ h}_t\rangle+|\bar{\sigma}_t|^2,\ \ N_t=\int^t_0\exp(2Cs)\xi_s\tilde{X}_sd\langle B\rangle_s-2\int^t_0\exp(2Cs)G(\xi_s)\tilde{X}_s ds,\\
&M_t=2\int^t_0\exp(2Cs)\langle \bar{X}_s,
\bar{\sigma}_s\rangle\tilde{X}_s d B_s+\int^t_0\exp(2Cs)|\bar{X}_s|^2d_s\tilde{X}_sd B_s+\int^t_0\exp(2Cs)|\bar{X}_s|^2b_s\tilde{X}_sd\tilde{B}_s.
\end{align*}
Then by assumption (B4), we obtain that
\begin{align*}
\Lambda_t^1\leq -2\eta \int^t_0 \exp(2Cs)|\bar{X}_s|^2\tilde{X}_sds, \
\Lambda_t^2\leq 2(1+\bar{\sigma}^2)\alpha_1\alpha_2\int^t_0 \exp(2Cs)|\bar{X}_s|^2\tilde{X}_sds.
\end{align*}
Note that  $N_t$ is a decreasing $G$-martingale and $N_t\leq 0$. Thus we conclude that
 \begin{align*}
 \exp(2Ct)|X^x_t-X^{x^{\prime}}_t|^2\tilde{X}_t\leq |x-x^{\prime}|^2+M_t.
\end{align*}
In sprit of $M_t$ is a symmetric $G$-martingale, we derive that
\[
\mathbb{\hat{E}}^{\tilde{G}}[\exp(2Ct)|X^x_t-X^{x^{\prime}}_t|^2\tilde{X}_t]\leq |x-x^{\prime}|^2.
\]
Consequently,
\begin{align*}
\mathbb{\hat{E}}^{\tilde{G}}[|X^x_t-X^{x^{\prime}}_t|\tilde{X}_t]\leq \exp(-\eta t+(1+\bar{\sigma}^2)\alpha_1\alpha_2t)|x-x^{\prime}|,
\end{align*}
and the first inequality holds.

Denote $\kappa_s=\exp(C s)$  and  $\hat{\varphi}_s=\varphi(X^x_s)-\varphi(0)$ for $\varphi=b,h,\sigma$. Then it follows from the $G$-It\^{o}'s formula that
\begin{align*}
&\kappa_t|X^x_t|^2\tilde{X}_t-|x|^2-C\int^t_0\kappa_s|X^x_s|^2\tilde{X}_sds-\bar{M}_t
\\
&\leq 2\int^t_0\kappa_s\langle X^x_s, b(X^x_s)\rangle \tilde{X}_s ds+
2\int^t_0\kappa_sG(2\langle X^x_s, h(X^x_s)\rangle+ |\sigma(X^x_s)|^2)\tilde{X}_s ds+2\int^t_0\kappa_s\langle X^x_s, \sigma(X^x_s)\rangle b_s\tilde{X}_sds
\\
&\ \ \ \ \ \ \
+4\int^t_0\kappa_sG(\langle X^x_s,\sigma(X^x_s)\rangle d_s)\tilde{X}_sds,\\
&\leq 2\int^t_0\kappa_s\langle X^x_s, \hat{b}_s\rangle  \tilde{X}_sds+2\int^t_0\kappa_sG(|\hat{\sigma}_s|^2+2\langle X^x_s, \hat{h}_s\rangle) \tilde{X}_sds+
2\int^t_0\kappa_s\langle X^x_s, \hat{\sigma}_s\rangle b_s\tilde{X}_sds\\
&\ \ \ \ \ \ \ \ +4\int^t_0\kappa_sG(\langle X^x_s,\hat{\sigma}_s\rangle d_s)\tilde{X}_sds+\Lambda^3_t,
\end{align*}
with \begin{align*}
\bar{M}_t=&2\int^t_0\kappa_s\langle X^x_s,\sigma(X^x_s)\rangle \tilde{X}_sd B_s+\int^t_0\kappa_s|X^x_s|^2d_s\tilde{X}_sd B_s
 +\int^t_0\kappa_s|X^x_s|^2b_s\tilde{X}_sd\tilde{B}_s,\\
\Lambda^3_t=&2\int^t_0\kappa_s\langle X^x_s, b(0)\rangle  \tilde{X}_sds
 +2\int^t_0\kappa_sG(2\langle \sigma(0),\hat{\sigma}_s\rangle+\sigma^2(0)
+2\langle X^x_s, h(0)\rangle) \tilde{X}_sds+2\int^t_0\kappa_s\langle X^x_s, \sigma(0)\rangle b_s\tilde{X}_sds
\\ & \ \ \ \ \ \ \ \ \ +4\int^t_0\kappa_sG(\langle X^x_s,\sigma(0)\rangle d_s)\tilde{X}_sds,
\end{align*} where we have used that $\varphi_s=\hat{\varphi}_s+\varphi(0)$ and the sublinearity of $G$ in the last inequality,

In spirit of  assumption (B4), we get that
\begin{align}\label{mw231}
\kappa_t|X^x_t|^2\tilde{X}_t
&\leq |x|^2+\bar{M}_t-C\int^t_0\kappa_s|X^x_s|^2\tilde{X}_sds+\Lambda_t^3.
\end{align}
Recalling $ab\leq \frac{ca^2}{2}+\frac{b^2}{2c}$ for each $c>0$. Then we can find a constant $\tilde{C}$ depending only on $\eta,G,\alpha_1,\alpha_2$ and $b(0),h(0),\sigma(0)$,
so that
\[
\Lambda_t^3\leq C\int^t_0\kappa_s|X^x_s|^2\tilde{X}_sds+\tilde{C}\int^t_0\kappa_s\tilde{X}_sds.
\]
Consequently,  taking expectation on both sides of equation \eqref{mw231}, we derive that
\[
\hat{\mathbb{E}}^{\tilde{G}}[\exp(C t)|X^x_t|^2\tilde{X}_t]\leq |x|^2+\tilde{C}\exp(Ct).
\]
Thus, it follows  that
\[
\hat{\mathbb{E}}^{\tilde{G}}[|X^x_t|\tilde{X}_t]\leq \hat{\mathbb{E}}^{\tilde{G}}[|X^x_t|^2\tilde{X}_t]^{\frac{1}{2}}\leq \sqrt{\tilde{C}}+|x|,
\]
which completes the proof.
\end{proof}

Under assumptions {(B1)-(B5)},  we define%
\[
u(x):=Y_{0}^{x},\ \ x\in\mathbb{R}^n.
\]

\begin{lemma}\label{HM8}
 $u$ is a bounded continuous function. Moreover, there exists some constant $M$ depending only on
$L,\eta,\alpha_1,\alpha_2$ and $G$ such that
\[
|u(x)-u(x^{\prime})|\leq M|x-x^{\prime}|.
\]
\end{lemma}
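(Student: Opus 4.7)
The plan is to combine the linearization technique developed in the proof of Theorem \ref{HM3} with the forward contraction estimate of Lemma \ref{HW2}. Working in dimension $d=1$ (as in Section 3; the general case is analogous), boundedness of $u$ is immediate: by (B1) the generator of the infinite-horizon $G$-BSDE in \eqref{App1} at $(y,z)=(0,0)$ is bounded in absolute value (after absorbing the $\bar{\sigma}^2$ factor from $G$) by $\alpha$ uniformly in $x$, so the bound from Theorem \ref{HM3} yields $\|u\|_\infty \le \alpha/\mu$.

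For the Lipschitz estimate, I would fix $x,x'\in\mathbb{R}^n$ and set $\hat Y=Y^x-Y^{x'}$, $\hat Z=Z^x-Z^{x'}$. Split the driver as $\hat f_s=[f(X^x_s,Y^x_s,Z^x_s)-f(X^x_s,Y^{x'}_s,Z^{x'}_s)]+m_s$ with $m_s:=f(X^x_s,Y^{x'}_s,Z^{x'}_s)-f(X^{x'}_s,Y^{x'}_s,Z^{x'}_s)$; by (B2), $|m_s|\le L|X^x_s-X^{x'}_s|$, and likewise $|n_s|\le L|X^x_s-X^{x'}_s|$ for $\hat g_s$. Lemma \ref{HW4} linearizes the first bracket, producing bounded processes $a^\epsilon,b^\epsilon,c^\epsilon,d^\epsilon$ with $a^\epsilon+2G(c^\epsilon)\le-\mu$, $|b^\epsilon|,|d^\epsilon|\le\alpha_2$, such that $\hat f_s=a^\epsilon_s\hat Y_s+b^\epsilon_s\hat Z_s+m_s-m^\epsilon_s$ and $\hat g_s=c^\epsilon_s\hat Y_s+d^\epsilon_s\hat Z_s+n_s-n^\epsilon_s$ with $|m^\epsilon_s|,|n^\epsilon_s|\le 4L_1\epsilon$.

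Applying the representation of Lemma \ref{the5.2} on $[0,T]$ in the extended $\tilde G$-expectation space, exactly as in the proof of Theorem \ref{HM7} (the $K^{x'}$ contributions cancel via the identity in Lemma \ref{the5.2}, and $\int_0^T n_sX^\epsilon_s\,d\langle B\rangle_s\le\bar\sigma^2\int_0^T|n_s|X^\epsilon_s\,ds$ by positivity of $X^\epsilon$), one arrives at
\[
\hat Y_0 \le \mathbb{\hat E}^{\tilde G}[X^\epsilon_T\hat Y_T] + L(1+\bar\sigma^2)\int_0^T\mathbb{\hat E}^{\tilde G}[X^\epsilon_s|X^x_s-X^{x'}_s|]\,ds + R(\epsilon,T),
\]
with $R(\epsilon,T)\to 0$ as $\epsilon\to 0$. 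Since $\hat Y$ is uniformly bounded, estimate \eqref{HW6} makes the first term $O(\exp(-\mu T))$, which vanishes as $T\to\infty$.

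The decisive step is controlling the integrand. From \eqref{LSDE3} and the q.s.\ inequality $\int c^\epsilon d\langle B\rangle\le 2\int G(c^\epsilon)ds$, one obtains the pathwise bound $X^\epsilon_s\le \exp(-\mu s)\tilde X_s\le\tilde X_s$, where $\tilde X$ is exactly the $\tilde G$-martingale of Lemma \ref{HW2} driven by $d^\epsilon,b^\epsilon$ (both bounded by $\alpha_2$). Hence Lemma \ref{HW2}(i) gives
\[
\mathbb{\hat E}^{\tilde G}[X^\epsilon_s|X^x_s-X^{x'}_s|] \le \exp\bigl(-(\eta-(1+\bar\sigma^2)\alpha_1\alpha_2)s\bigr)|x-x'|,
\]
which is integrable on $[0,\infty)$ by (B5). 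Letting $\epsilon\to 0$, then $T\to\infty$, and symmetrizing in $(x,x')$ produces $|u(x)-u(x')|\le M|x-x'|$ with $M=L(1+\bar\sigma^2)/(\eta-(1+\bar\sigma^2)\alpha_1\alpha_2)$, depending only on $L,\eta,\alpha_1,\alpha_2,G$; continuity follows. The main technical obstacle is the careful bookkeeping across the various stochastic exponentials in the extended $\tilde G$-space — in particular identifying the remainder $\tilde X$ from the decomposition of $X^\epsilon$ with the specific martingale to which Lemma \ref{HW2}(i) applies, and verifying the cancellation of the $K^{x'}$ terms as in the proof of Theorem \ref{HM7}.
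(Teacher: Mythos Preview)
Your proposal is correct and follows essentially the same route as the paper: linearize the $(y,z)$-dependence via Lemma \ref{HW4}, apply the explicit representation of Lemma \ref{the5.2} (handling the $K^{x'}$ terms exactly as in the comparison theorem), then bound $\mathbb{\hat E}^{\tilde G}[X^\epsilon_s|X^x_s-X^{x'}_s|]$ via the decomposition $X^\epsilon_s\le e^{-\mu s}\tilde X_s$ and Lemma \ref{HW2}(i), integrate, and send $\epsilon\to 0$, $T\to\infty$. The only cosmetic difference is that the paper keeps the factor $e^{-\mu s}$ in the integrand (yielding the sharper intermediate constant $\frac{(1+\bar\sigma^2)L}{\mu+\eta-(1+\bar\sigma^2)\alpha_1\alpha_2}$) before relaxing it to your constant $M=\frac{(1+\bar\sigma^2)L}{\eta-(1+\bar\sigma^2)\alpha_1\alpha_2}$ so that $M$ is independent of $\mu$, which is exactly what the lemma claims.
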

\begin{proof}
Without loss of generality, assume $d=1$.
By Theorem \ref{HM3}, $Y^x_t$ is bounded by $\frac{\alpha}{\mu}$. In particular, $|u(x)|\leq \frac{\alpha}{\mu}.$
Denote $\tilde{Y} =Y^x-Y^{x^{\prime}}$, $\tilde{Z}=Z^x-Z^{x^{\prime}}$.
Using the same kind of linearization as in Theorem \ref{HM3}, we get
\[
\tilde{Y}_{t}+K_{t}^{x^{\prime}}=\tilde{Y}_{T}+K_{T}^{x^{\prime}}
+\int_{t}^{T}\tilde{f}_{s}ds+\int%
_{t}^{T}\tilde{g}_{s}d\langle B\rangle_{s}-\int_{t}^{T}\tilde{Z}_{s}dB_{s}%
-(K_{T}^{x}-K_{t}^{x}),
\]
where $\tilde{f}_{s}=f(X^x_s,Y_{s}^{x},Z_{s}%
^{x})-f(X^{x^{\prime}}_s,Y_{s}^{x^{\prime}},Z_{s}%
^{x^{\prime}})$, $\tilde{g}_{s}=g(X^x_s,Y_{s}^{x},Z_{s}%
^{x})-g(X^{x^{\prime}}_s,Y_{s}^{x^{\prime}},Z_{s}%
^{x^{\prime}})$.
Consequently, by Lemma \ref{HW4}, we get for each $\epsilon>0$
\[
\tilde{f}_{s}=a^{\epsilon}_{s}\tilde{Y}_{s}+b^{\epsilon}_{s}\tilde{Z}%
_{s}+m^{\epsilon}_{s}+m_s,\ \hat{g}_{s}=c^{\epsilon}_{s}\tilde{Y}%
_{s}+d^{\epsilon}_{s}\tilde{Z}_{s}+n^{\epsilon}_{s}+n_s,
\]
where $m_{s}=f(X^x_s,Y_{s}^{x^{\prime}},Z_{s}%
^{x^{\prime}})-f(X^{x^{\prime}}_s,Y_{s}^{x^{\prime}},Z_{s}%
^{x^{\prime}})$,  $n_{s}=g(X^x_s,Y_{s}^{x^{\prime}},Z_{s}%
^{x^{\prime}})-g(X^{x^{\prime}}_s,Y_{s}^{x^{\prime}},Z_{s}%
^{x^{\prime}})$ and $|m^{\epsilon}_{s}|\leq 2(L+\alpha_2)\epsilon$, $|n^{\epsilon}_{s}|\leq 2(L+\alpha_2)\epsilon$,
$a^{\epsilon}_{s}+2G(c^{\epsilon}_{s})\leq -\mu$.
Recalling Lemma \ref{the5.2}, we obtain that
\begin{align}\label{HM5}
\tilde{Y}_{0}&  \leq \mathbb{\hat{E}}^{\tilde{G}}%
[X^{\epsilon}_{T}\tilde{Y}_{T}+\int_{0}^{T}m_{s}X^{\epsilon}_{s}d{s}+\int_{0}^{T}n_{s}X^{\epsilon}_{s}d\langle B\rangle_{s}]
+\mathbb{\hat{E}}^{\tilde{G}}%
[\int_{0}^{T}m^{\epsilon}_{s}X^{\epsilon}_{s}d{s}+\int_{0}^{T}n^{\epsilon}_{s}X^{\epsilon}_{s}d\langle B\rangle_{s}],
\nonumber\\ & \leq   \mathbb{\hat{E}}^{\tilde{G}}%
[X^{\epsilon}_{T}\tilde{Y}_{T}+(1+\bar{\sigma}^2)L\int_{0}^{T}|X^x_s-X^{x^{\prime}}_s|X^{\epsilon}_{s}d{s}]
+\mathbb{\hat{E}}^{\tilde{G}}%
[\int_{0}^{T}m^{\epsilon}_{s}X^{\epsilon}_{s}d{s}+\int_{0}^{T}n^{\epsilon}_{s}X^{\epsilon}_{s}d\langle B\rangle_{s}],\ \ q.s.,
\end{align}
where $\{X^{\epsilon}_{t}\}_{t\in\lbrack0,T]}$ is given by
\begin{equation*}
X^{\epsilon}_{t}=\exp(\int_{0}^{t}(a^{\epsilon}_{s}-b^{\epsilon}_{s}d^{\epsilon}_{s})ds+\int_{0}^{t}c^{\epsilon}_{s}d\langle
B\rangle_{s})\mathcal{E}_{t}^{B}\mathcal{E}_{t}^{\tilde{B}}.
\end{equation*}
Here $\mathcal{E}_{t}^{B}=\exp(\int^t_0d^{\epsilon}_{s}dB_s-\frac{1}{2}\int^t_0|d^{\epsilon}_{s}|^2d\langle B\rangle_s)$ and
$\mathcal{E}_{t}^{\tilde{B}}=\exp(\int^t_0b^{\epsilon}_{s}d\tilde{B}_s-\frac{1}{2}\int^t_0|b^{\epsilon}_{s}|^2d\langle \tilde{B}\rangle_s)$.
Thus
\[
|X^x_t-X^{x^{\prime}}_t||X^{\epsilon}_t|\leq \exp(-\mu t)|X^x_t-X^{x^{\prime}}_t|\tilde{X}^{\epsilon}_t,
\]
where \[
\tilde{X}^{\epsilon}_{t}=1+\int_{0}^{t}d^{\epsilon}_{s}\tilde{X}^{\epsilon}_{s}dB_{s}+\int_{0}%
^{t}b^{\epsilon}_{s}\tilde{X}^{\epsilon}_{s}d\tilde{B}_{s}.
\]
From Lemma \ref{HW2}, we conclude that
\[
\mathbb{\hat{E}}^{\tilde{G}}[|X^x_t-X^{x^{\prime}}_t|X^{\epsilon}_t]\leq \exp(-\mu t-\eta t+(1+\bar{\sigma}^2)\alpha_1\alpha_2t)|x-x^{\prime}|.
\]
Thus by equation \eqref{HM5} and  sending $\epsilon\rightarrow 0$, we deduce that
\[
u(x)-u(x^{\prime})\leq \exp(-\mu T)\frac{\alpha}{\mu}+\frac{(1+\bar{\sigma}^2)L}{\mu+\eta-(1+\bar{\sigma}^2)\alpha_1\alpha_2}|x-x^{\prime}|,
\]
Letting $T\rightarrow\infty$, we obtain $u(x)-u(x^{\prime})\leq \frac{(1+\bar{\sigma}^2)L}{\eta-(1+\bar{\sigma}^2)\alpha_1\alpha_2}|x-x^{\prime}|$.
In a similar  way, we  also have
$u(x^{\prime})-u(x)\leq \frac{(1+\bar{\sigma}^2)L}{\eta-(1+\bar{\sigma}^2)\alpha_1\alpha_2}|x-x^{\prime}|$,
which is the desired result.
\end{proof}

Now we shall present the main results of this section.
\begin{lemma}\label{mw12}
For each $(t,x)\in[0,\infty)\times\mathbb{R}^n$,  we have
 $Y_{t}^{x}=u(X^x_t)$.
 \end{lemma}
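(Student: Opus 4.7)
The plan is to combine the flow property of the forward $G$-SDE with the uniqueness result of Theorem \ref{HM3}, and to use the Lipschitz continuity of $u$ from Lemma \ref{HM8} to handle the fact that the initial condition in the ``shifted'' equation is the random variable $X^x_t$ rather than a deterministic point.

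\textbf{Step 1 (shift the equation past time $t$).} Fix $t\geq 0$ and set $\bar B_s := B_{t+s}-B_t$, which is again a $d$-dimensional $G$-Brownian motion with the same $G$. Define the shifted processes
\[
\bar X_s := X^x_{t+s},\qquad \tilde Y_s := Y^x_{t+s},\qquad \tilde Z_s := Z^x_{t+s},\qquad \tilde K_s := K^x_{t+s}-K^x_t.
\]
A straightforward check shows that $(\bar X_s)_{s\ge 0}$ solves the forward $G$-SDE in \eqref{App1} driven by $\bar B$ with (random) initial condition $\bar X_0 = X^x_t$, while $(\tilde Y,\tilde Z,\tilde K)\in\mathfrak S^2_G(0,\infty)$ solves the corresponding infinite-horizon $G$-BSDE with forward process $\bar X$ and the same generators $f,g_{ij}$. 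Thanks to (B1)--(B5) the assumptions (H1)--(H4) of Section 3 are satisfied, so Theorem \ref{HM3} gives uniqueness of this shifted solution.

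\textbf{Step 2 (reduce to deterministic initial data).} Approximate the $L^2_G$-random variable $X^x_t$ by simple cylindrical variables $\xi_n = \sum_{k=1}^{N_n} x^n_k\mathbf{1}_{A^n_k}$ such that $\xi_n \to X^x_t$ in $L^2_G(\Omega_t)$. Let $(Y^{n,k},Z^{n,k},K^{n,k})$ denote the unique infinite-horizon solution associated with the forward SDE started at the deterministic point $x^n_k$, and set
\[
\hat Y^n_s := \sum_k Y^{n,k}_s\mathbf{1}_{A^n_k},\qquad \hat Z^n_s := \sum_k Z^{n,k}_s\mathbf{1}_{A^n_k},\qquad \hat K^n_s := \sum_k K^{n,k}_s\mathbf{1}_{A^n_k}.
\]
By construction this triple solves the shifted $G$-BSDE with forward initial datum $\xi_n$, and at $s=0$ it equals $\sum_k u(x^n_k)\mathbf{1}_{A^n_k} = u(\xi_n)$ by the very definition $u(x)=Y^x_0$.

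\textbf{Step 3 (pass to the limit).} Lemma \ref{HM8} supplies a Lipschitz estimate on $u$, so $u(\xi_n)\to u(X^x_t)$ in $L^2_G$. On the other hand, the Lipschitz dependence of the infinite-horizon solution on the initial point of the forward SDE — which is exactly the quantitative content of Lemma \ref{HM8} combined with the a priori estimates of Theorem \ref{pro3.5} — shows that the approximating solutions $\hat Y^n_0$ converge to the solution $\tilde Y_0 = Y^x_t$ of the shifted BSDE with initial datum $X^x_t$. Identifying the two limits gives $Y^x_t = u(X^x_t)$ q.s.

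\textbf{Main obstacle.} The delicate point is Step 2: in the quasi-sure framework one cannot condition on the atoms $A^n_k$ in the classical sense, so one must justify that the pieced-together triple $(\hat Y^n,\hat Z^n,\hat K^n)$ really is a solution of the BSDE in $\mathfrak S^2_G(0,\infty)$ and not merely a formal object. This is handled by approximating $\xi_n$ itself through continuous, quasi-continuous functions of the canonical process (so that multiplying by $\mathbf{1}_{A^n_k}$ stays inside the $G$-framework) and then exploiting the continuity-in-$x$ bound from Lemma \ref{HM8} to dispense with the indicator functions in the limit. Once this is in place, the identity $Y^x_t = u(X^x_t)$ is immediate, and it also shows that the required measurable/continuous version of $u$ is the correct Feynman--Kac candidate.
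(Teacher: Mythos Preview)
Your proposal follows essentially the same strategy as the paper: shift to time $t$ using that $(B_{t+s}-B_t)_{s\ge0}$ is again a $G$-Brownian motion, approximate the random initial datum $X^x_t$ by simple functions, and use the Lipschitz bound of Lemma \ref{HM8} to pass to the limit. The paper organizes this into three auxiliary lemmas in the appendix: one extending the Lipschitz bound to random initial data $\xi\in L^4_G(\Omega_t)$, one showing (via the finite-horizon approximations of Theorem \ref{HM3}) that $Y^{t,x}_t$ is deterministic and equals $u(x)$, and one deducing $u(\xi)=Y^{t,\xi}_t$ for general $\xi$.

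The one substantive difference concerns exactly the obstacle you flag in Step~2. Rather than piecing together solutions $\sum_k Y^{n,k}\mathbf 1_{A^n_k}$ over a $\mathcal B(\Omega_t)$-partition---which is delicate because indicators $\mathbf 1_{A^n_k}$ need not belong to $L^p_G$, so the glued triple may fail to lie in $\mathfrak S^2_G$---the paper first defines the infinite-horizon $G$-FBSDE directly for a random $\xi\in L^4_G(\Omega_t)$ and proves the pointwise q.s.\ Lipschitz estimate $|Y^{t,\xi}_t-Y^{t,\xi'}_t|\leq M|\xi-\xi'|$. Then for bounded $\xi$ one picks a simple $\eta^\varepsilon=\sum_i x_i\mathbf 1_{A_i}$ with $|\eta^\varepsilon-\xi|\le\varepsilon$ pointwise, and on each $A_i$ applies the Lipschitz estimate to the pair $(\xi,x_i)$ to get $|Y^{t,\xi}_t-u(\eta^\varepsilon)|=\sum_i|Y^{t,\xi}_t-Y^{t,x_i}_t|\mathbf 1_{A_i}\le M\varepsilon$, without ever constructing a BSDE solution with initial datum $\eta^\varepsilon$. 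This sidesteps the measurability issue entirely and is cleaner than the workaround you sketch. Note also that your Step~2 uses implicitly that the value at $s=0$ of the \emph{shifted} solution started from a deterministic $x^n_k$ equals $u(x^n_k)$; the paper isolates this as a separate lemma, proved via $Y^{n,t,x}_t=Y^{n,x}_0$ and the convergence in Theorem \ref{HM3}.
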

 The proof will be given in the appendix.

\begin{theorem}\label{HM9}
 $u(x)$ is the unique bounded continuous viscosity solution of the
following PDE:%
\begin{equation}
G(H(D_{x}^{2}u,D_{x}u,u,x))+\langle
b(x),D_{x}u\rangle+f(x,u,D_{x}u\sigma(x))=0,
\label{feynman}%
\end{equation}
where
\begin{align*}
H_{ij}(D_{x}^{2}u,D_{x}u,u,x)=   \langle D_{x}^{2}u\sigma_{i}%
(x),\sigma_{j}(x)\rangle+2\langle D_{x}u,h_{ij}(x)\rangle +2g_{ij}(x,u,D_{x}u
\sigma(x)).
\end{align*}
\end{theorem}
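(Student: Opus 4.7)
My plan is to combine Lemma \ref{mw12} (which delivers the dynamic programming identity $Y_t^x=u(X_t^x)$) and Lemma \ref{HM8} (boundedness and Lipschitz continuity of $u$) with the parabolic nonlinear Feynman--Kac argument of \cite{HJPS1}. To verify the subsolution inequality at a point $x_0$, I would pick $\varphi\in C^2$ such that $u-\varphi$ has a local maximum at $x_0$ with $u(x_0)=\varphi(x_0)$. On a short interval $[0,\delta]$ the process $\varphi(X_t^{x_0})$ admits a $G$-It\^o expansion, while the infinite-horizon $G$-BSDE restricted to $[0,\delta]$ has terminal value $Y_\delta^{x_0}=u(X_\delta^{x_0})\le \varphi(X_\delta^{x_0})$. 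Comparing the two resulting finite-horizon $G$-BSDEs via Theorem \ref{pro3.5}, then dividing by $\delta$ and sending $\delta\downarrow 0$, should produce the viscosity subsolution inequality for \eqref{feynman} at $x_0$. The supersolution inequality follows by the symmetric argument.

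\textbf{Uniqueness via BSDE.}
For uniqueness my plan is to bypass any direct comparison principle on $\mathbb R^n$ and instead reduce to the parabolic Feynman--Kac formula of \cite{HJPS1}. If $v$ is any bounded continuous viscosity solution of \eqref{feynman}, then $V(t,x):=v(x)$ is, on every slab $[0,T]\times\mathbb R^n$, a bounded continuous viscosity solution of the parabolic $G$-PDE associated with the same spatial operator and terminal datum $v$, since $\partial_t V\equiv 0$. By the parabolic nonlinear Feynman--Kac formula, $v(x)=\widetilde Y_0^{x,T}$, where $\widetilde Y^{x,T}$ solves the finite-horizon $G$-BSDE driven by $X^x$ on $[0,T]$ with terminal condition $v(X_T^x)$. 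Lemma \ref{mw12} tells us that $Y^x$ restricted to $[0,T]$ solves the same $G$-BSDE with terminal datum $u(X_T^x)$. Applying to $\widetilde Y^{x,T}-Y^x$ the linearization technique used in the uniqueness part of Theorem \ref{HM3}, together with (B3), should yield
\[
|v(x)-u(x)|=|\widetilde Y_0^{x,T}-Y_0^x|\le C\exp(-\mu T),
\]
where $C$ depends only on the uniform bounds of $u$ and $v$, and on $G$. Sending $T\to\infty$ then forces $v\equiv u$.

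\textbf{Main obstacle.}
The most delicate step is the uniqueness argument. I would need to justify rigorously that any bounded continuous viscosity solution of the elliptic equation \eqref{feynman} is, as a time-independent function, also a bounded continuous viscosity solution of the corresponding parabolic $G$-PDE on each $[0,T]\times\mathbb R^n$, and that the nonlinear Feynman--Kac formula of \cite{HJPS1} is applicable in this unbounded-state Markovian setting under assumptions (B1)--(B5). The subsequent exponential-decay passage to $T\to\infty$ is essentially a replay of the estimates in Theorem \ref{HM3} and carries no new idea. For the viscosity property itself, the main technical point is tracking the decreasing $G$-martingale $K$; since $K$ is a $G$-martingale, it only contributes a nonpositive term after taking $\mathbb{\hat E}$ and does not obstruct the passage to the PDE in the limit $\delta\downarrow 0$.
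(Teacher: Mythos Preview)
Your approach is essentially the paper's: the viscosity property is derived from the identity $Y_t^x=u(X_t^x)$ on $[0,\delta]$ exactly as in \cite{HJPS1}, and uniqueness is obtained by regarding a bounded continuous elliptic solution $v$ as a time-independent parabolic solution, invoking the parabolic Feynman--Kac representation, and then using the contraction in Theorem \ref{HM3}. The only cosmetic difference is the last step: the paper assembles the finite-horizon representations into a single infinite-horizon $G$-BSDE solved by $v(X^x_\cdot)$ and then quotes the uniqueness part of Theorem \ref{HM3} directly, whereas you compare the two finite-horizon $G$-BSDEs and pass $T\to\infty$ via the exponential decay estimate. Both are the same idea.

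The obstacle you flag is real and is exactly where the paper spends its effort in the appendix. The Feynman--Kac formula of \cite{HJPS1} is stated for terminal data of the form $\phi(X_T^{t,x})$ with $\phi$ Lipschitz, but your $v$ is only bounded and continuous, so you cannot invoke it directly. The paper closes this gap by (i) showing $x\mapsto \hat{\mathbb E}[\phi(X_t^x)]$ is continuous for bounded continuous $\phi$, (ii) approximating $\phi$ uniformly on compacts by Lipschitz functions $\phi^m$ and proving that the associated $G$-BSDE solutions $Y^{t,T,m,x}$ converge to $Y^{t,T,x}$ in $S_G^2$, uniformly in $x$ on compacts, and (iii) concluding via stability of viscosity solutions that the Feynman--Kac identity $\bar u(t,X_t^x)=Y_t^{T,x}$ extends to bounded continuous terminals. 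Your sketch is correct provided you carry out this approximation argument; without it the appeal to \cite{HJPS1} is formally a gap.
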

\begin{proof}
The uniqueness of viscosity solution of equation (\ref{feynman}) will be given in appendix.
Applying Lemma \ref{mw12}, we obtain for each $\delta>0$,
\begin{align*}
u(x)=  &  u(X_{\delta}^{x})+\int_{0}^{\delta}%
f(X_{r}^{x},u(X_{r}^{x}),Z_{r}^{x})dr +\int_{0}^{\delta}g_{ij}(X_{r}^{x},u(X_{r}^{x}),Z_{r}^{x})d\langle
B^{i},B^{j}\rangle_{r}\\
& -\int_{0}^{\delta}Z_{r}^{x}dB_{r}-K_{\delta
}^{x},
\end{align*}
which can be seen as a $G$-BSDE in finite horizon $[0,\delta]$.
Then we can prove that $u$ is a viscosity solution of equation \eqref{feynman} by a similar way as  the proof of Theorem 4.5 in  \cite{HJPS1}. Indeed,  it is easier than  the one  of\cite{HJPS1} in our case, since there is no
time variable (see also \cite{PE, RM} in the linear case). The proof is complete.
\end{proof}

In the next theorem, we shall discuss the sign of the solution of equation \eqref{feynman}.
\begin{theorem}
Suppose moreover that  $-f(X^x_s,0,0)+2G(-g_{ij}(X^x_s,0,0))\leq 0$ for each $s>0$.
Then $ u(x)\geq 0$.
\end{theorem}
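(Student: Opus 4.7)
The plan is to apply the comparison theorem (Theorem \ref{HM7}) with a carefully chosen auxiliary $G$-BSDE whose solution is identically zero.

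First I would introduce the auxiliary generators $\tilde{f}(s,y,z):=-\mu y$ and $\tilde{g}_{ij}(s,y,z):=0$ on the $G$-expectation space. These obviously satisfy $(H1)$ and $(H2)$ (Lipschitz in $y$ with constant $\mu$, independent of $z$), and $(H4)$ since $\tilde{f}(s,0,0)=0$, $\tilde{g}_{ij}(s,0,0)=0$. For $(H3)$, one has $(\tilde{f}(s,y,z)-\tilde{f}(s,y',z))(y-y')+2G((\tilde{g}_{ij}-\tilde{g}_{ij})(y-y'))=-\mu|y-y'|^{2}$. By Theorem \ref{HM3}, the unique bounded solution $(\tilde{Y},\tilde{Z},\tilde{K})$ of the associated $G$-BSDE with infinite horizon is $(0,0,0)$, as this triple clearly satisfies the equation.

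On the other hand, $(Y^{x},Z^{x},K^{x})$ from the FBSDE \eqref{App1} is, for fixed $x$, the bounded solution of the infinite horizon $G$-BSDE with generators $f^{x}(s,y,z):=f(X^{x}_{s},y,z)$ and $g^{x}_{ij}(s,y,z):=g_{ij}(X^{x}_{s},y,z)$, which satisfy $(H1)$-$(H4)$ by the assumptions $(B1)$-$(B3)$ (in particular $|f^{x}(s,0,0)|+2G(|g^{x}_{ij}(s,0,0)|)\leq \alpha$). So both BSDEs fall within the setting of Theorem \ref{HM7}.

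The key step is to verify the comparison hypothesis of Theorem \ref{HM7} at the index $i$ corresponding to the trivial solution, i.e.\ at $(Y^{i}_{s},Z^{i}_{s})=(\tilde{Y}_{s},\tilde{Z}_{s})=(0,0)$. Computing,
\begin{align*}
\tilde{f}(s,0,0)-f^{x}(s,0,0)+2G\bigl(\tilde{g}_{ij}(s,0,0)-g^{x}_{ij}(s,0,0)\bigr)
=-f(X^{x}_{s},0,0)+2G\bigl(-g_{ij}(X^{x}_{s},0,0)\bigr),
\end{align*}
which is nonpositive by the standing hypothesis of the theorem. Therefore Theorem \ref{HM7} yields $\tilde{Y}_{t}\leq Y^{x}_{t}$ quasi-surely for every $t$, i.e.\ $Y^{x}_{t}\geq 0$ q.s. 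Specialising at $t=0$ gives $u(x)=Y^{x}_{0}\geq 0$, which is the claim.

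The argument is essentially a direct invocation of the comparison theorem, so I do not anticipate a real obstacle; the only point requiring mild care is the verification that the auxiliary trivial BSDE and the $x$-parametrised BSDE both satisfy $(H1)$-$(H4)$ with bounded $Y$, which is where assumptions $(B1)$-$(B3)$ enter and guarantee the applicability of Theorem \ref{HM7}.
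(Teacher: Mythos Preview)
Your proposal is correct and follows essentially the same approach as the paper: both invoke the Comparison Theorem~\ref{HM7} to conclude $Y^{x}_{t}\geq 0$ and then specialise to $t=0$. The paper's proof is a one-line reference to Theorem~\ref{HM7}, whereas you make the implicit comparison explicit by introducing the auxiliary generators $\tilde f(s,y,z)=-\mu y$, $\tilde g_{ij}=0$ and noting that their unique bounded solution is the zero triple; this is exactly the comparison the paper has in mind.
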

\begin{proof}
It follows from Comparison Theorem \ref{HM7} that $\forall t\geq 0$, $Y^x_t\geq 0$.
In particular, for $t=0$, we deduce
that $u(x)\geq 0.$
\end{proof}
\begin{remark}{\upshape
In order to state the $G$-EBSDE, we establish the fully nonlinear Feynman-Kac formula for elliptic PDEs under stronger assumptions (B1)-(B5).
However, the assumptions (B4) and (B5) can be removed as the linear case through a uniform continuity argument. These more technical
details are left to future work.}
\end{remark}
\section{Ergodic BSDEs driven by $G$-Brownian motion}
In this section, we shall study the following type of (Markovian) ergodic BSDEs driven by
$G$-Brownian motion under assumptions (B1), (B2), (B4) and (B5) ($\mu=0$):
\begin{align}
Y_{s}^{x}   =Y^x_T+\int_{s}^{T}[f(X_{r}^{x},Z_{r}^{x})+\gamma^1\lambda]dr+\int_{s}^{T}[g_{ij}(X_{r}^{x}%
,Z_{r}^{x})+\gamma^2_{ij}\lambda]d\langle B^i,B^j\rangle_{r} -\int_{s}^{T}Z_{r}^{x}dB_{r}-(K_{T}^{x}-K_{s}^{x}),
\label{AppHM}%
\end{align}
where $\gamma^1$ is a fixed constant   and $\gamma^2 $ is a  given $d\times d$ symmetric matrix satisfied $\gamma^1+2G(\gamma^2)<0$ as in introduction.

As in \cite{BH}, we start by considering an infinite horizon equation with strictly monotonic drift, namely
for each $\epsilon>0$, the $G$-BSDEs:
\begin{align}
Y_{s}^{x,\epsilon}  &  =Y^{x,\epsilon}_T+\int_{s}^{T}[f(X_{r}^{x},Z_{r}^{x,\epsilon})+\gamma^1\epsilon Y_{r}^{x,\epsilon}]dr+\int_{s}^{T}[g_{ij}(X_{r}^{x}%
,Z_{r}^{x,\epsilon})+\gamma^2_{ij}\epsilon Y_{r}^{x,\epsilon}]d\langle B^i,B^j\rangle_{r}\nonumber\\
&  -\int_{s}^{T}Z_{r}^{x,\epsilon}dB_{r}-(K_{T}^{x,\epsilon}-K_{s}^{x,\epsilon}).
\label{AppG2}%
\end{align}
From Theorem \ref{HM3}, we immediately have
\begin{lemma}\label{HM4}
The  $G$-BSDE \eqref{AppG2} has a unique solution $(Y^{x,\epsilon}, Z^{x,\epsilon}, K^{x,\epsilon})$ belonging to $\mathfrak{S}_{G}^{2}(0,\infty)$ such that $Y^{x,\epsilon}$ is a bounded process. Furthermore, $|Y^{\epsilon,x}_t|\leq \frac{\alpha}{-(\gamma^1+2G(\gamma^2))\epsilon}.$
\end{lemma}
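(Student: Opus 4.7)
The plan is to recognize equation \eqref{AppG2} as a special instance of the general infinite-horizon $G$-BSDE \eqref{HM} and invoke Theorem \ref{HM3} directly. Define the generators
\[
\tilde{f}(s,\omega,y,z) := f(X_s^x(\omega), z) + \gamma^1 \epsilon y, \qquad \tilde{g}_{ij}(s,\omega,y,z) := g_{ij}(X_s^x(\omega), z) + \gamma^2_{ij} \epsilon y.
\]
I would then verify hypotheses (H1)--(H4) for $\tilde{f}$ and $\tilde{g}_{ij}$. The measurability and integrability (H1) and the Lipschitz property (H2) follow from assumption (B2) on $f,g_{ij}$ combined with standard moment estimates on the $G$-SDE solution $X^x$. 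The bound at zero (H4) is immediate from (B1): since $|f(x,0,0)| + 2G(|g_{ij}(x,0,0)|) \leq \alpha$ uniformly in $x$, we get $|\tilde{f}(s,0,0)| + 2G(|\tilde{g}_{ij}(s,0,0)|) \leq \alpha$, so one may take $L_2 = \alpha$.

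The key point is the strict monotonicity (H3). For fixed $z$ and arbitrary $y,y'$,
\[
\bigl(\tilde{f}(s,y,z)-\tilde{f}(s,y',z)\bigr)(y-y') + 2G\bigl((\tilde{g}_{ij}(s,y,z)-\tilde{g}_{ij}(s,y',z))(y-y')\bigr) = (\gamma^1 + 2G(\gamma^2))\epsilon\,(y-y')^2,
\]
using the positive homogeneity of $G$ (the factor $(y-y')^2 \geq 0$ may be pulled out). Since the standing assumption $\gamma^1 + 2G(\gamma^2) < 0$ holds, (H3) is satisfied with monotonicity constant
\[
\mu = -(\gamma^1 + 2G(\gamma^2))\epsilon > 0.
\]

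Applying Theorem \ref{HM3} then produces a unique solution $(Y^{x,\epsilon}, Z^{x,\epsilon}, K^{x,\epsilon}) \in \mathfrak{S}_G^2(0,\infty)$ with $Y^{x,\epsilon}$ bounded. For the explicit bound, I would revisit the existence proof of Theorem \ref{HM3}, where the a priori estimate $|Y_t| \leq L_2/\mu$ was obtained from the representation via the extended $\tilde{G}$-SDE combined with \eqref{HW6}. Substituting $L_2 = \alpha$ and the value of $\mu$ above yields
\[
|Y^{x,\epsilon}_t| \leq \frac{\alpha}{-(\gamma^1 + 2G(\gamma^2))\epsilon}, \qquad q.s.
\]
There is no substantive obstacle here: the lemma is essentially a corollary of Theorem \ref{HM3} once one observes that the $\gamma^1\epsilon y$ and $\gamma^2_{ij}\epsilon y$ terms are precisely what supply the required strict monotonicity, with the ergodicity parameter $\epsilon$ controlling the decay rate and the sign hypothesis $\gamma^1 + 2G(\gamma^2) < 0$ guaranteeing $\mu > 0$.
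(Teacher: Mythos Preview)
Your proposal is correct and matches the paper's approach exactly: the paper simply prefaces the lemma with ``From Theorem \ref{HM3}, we immediately have'' and provides no further argument. Your verification that the $\gamma^1\epsilon y$ and $\gamma^2_{ij}\epsilon y$ terms supply (H3) with $\mu=-(\gamma^1+2G(\gamma^2))\epsilon$, together with reading off $L_2=\alpha$ from (B1) and invoking the bound \eqref{HM2}, is precisely what the paper intends.
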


Then denote $
v^{\epsilon}(x):=Y^{x,\epsilon}_0.
$
Then by Lemma \ref{HM8}, we have
\begin{lemma}There exists some constant $M>0$ independent of $\epsilon$ such that
\[
|v^{\epsilon}(x)-v^{\epsilon}(x^{\prime})|\leq M|x-x^{\prime}|.
\]
\end{lemma}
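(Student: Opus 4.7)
The plan is to simply invoke Lemma \ref{HM8}, interpreting the $G$-BSDE \eqref{AppG2} as an instance of the Markovian $G$-FBSDE \eqref{App1} with modified generators
\[
F^{\epsilon}(x,y,z) := f(x,z)+\gamma^{1}\epsilon y, \qquad G^{\epsilon}_{ij}(x,y,z) := g_{ij}(x,z)+\gamma^{2}_{ij}\epsilon y,
\]
and then to exploit the crucial feature that the explicit Lipschitz constant produced in the proof of Lemma \ref{HM8}, namely $\frac{(1+\bar{\sigma}^{2})L}{\eta-(1+\bar{\sigma}^{2})\alpha_{1}\alpha_{2}}$, does not involve the monotonicity constant $\mu$. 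Once this is observed, the bound obtained from Lemma \ref{HM8} is automatically uniform in $\epsilon$.

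First, I would verify that $(F^{\epsilon},G^{\epsilon})$ satisfies (B1)--(B5) with constants independent of $\epsilon$. Since $f$ and $g_{ij}$ here do not depend on $y$, the $x$- and $z$-Lipschitz constants of $F^{\epsilon}$ and $G^{\epsilon}$ are the same $L$ and $\alpha_{2}$ appearing for $f$ and $g_{ij}$; similarly $|F^{\epsilon}(x,0,0)|+2G(|G^{\epsilon}_{ij}(x,0,0)|)\leq\alpha$ is the same constant as in (B1). For (B3), a direct computation gives
\[
(F^{\epsilon}(x,y,z)-F^{\epsilon}(x,y',z))(y-y') + 2G((G^{\epsilon}(x,y,z)-G^{\epsilon}(x,y',z))(y-y')) = \epsilon(\gamma^{1}+2G(\gamma^{2}))|y-y'|^{2},
\]
so (B3) holds with monotonicity constant $\mu_{\epsilon}:=-\epsilon(\gamma^{1}+2G(\gamma^{2}))>0$. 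Assumptions (B4) and (B5) only involve the forward coefficients $b,h_{ij},\sigma$, so they are inherited unchanged.

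Next, applying Lemma \ref{HM8} to this system yields, for every fixed $\epsilon>0$ and every $T>0$,
\[
|v^{\epsilon}(x)-v^{\epsilon}(x')|\leq \exp(-\mu_{\epsilon}T)\frac{2\alpha}{\mu_{\epsilon}} + \frac{(1+\bar{\sigma}^{2})L}{\mu_{\epsilon}+\eta-(1+\bar{\sigma}^{2})\alpha_{1}\alpha_{2}}|x-x'|,
\]
where I have used the uniform bound $|Y^{x,\epsilon}_{T}|\leq \alpha/\mu_{\epsilon}$ from Lemma \ref{HM4}. For each fixed $\epsilon>0$ we have $\mu_{\epsilon}>0$, so sending $T\to\infty$ kills the first term. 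In the remaining term, $\mu_{\epsilon}\geq 0$ gives
\[
\frac{(1+\bar{\sigma}^{2})L}{\mu_{\epsilon}+\eta-(1+\bar{\sigma}^{2})\alpha_{1}\alpha_{2}} \leq \frac{(1+\bar{\sigma}^{2})L}{\eta-(1+\bar{\sigma}^{2})\alpha_{1}\alpha_{2}} =: M,
\]
where the right-hand side is finite by (B5) and, crucially, independent of $\epsilon$.

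The only subtle point is the uniformity of this bound as $\epsilon\downarrow 0$; this is precisely where assumption (B5) enters. The transient term $\exp(-\mu_{\epsilon}T)\cdot 2\alpha/\mu_{\epsilon}$ blows up as $\epsilon\downarrow 0$ for fixed $T$, but since we send $T\to\infty$ separately for each $\epsilon$, it vanishes and plays no role in the final constant. I do not anticipate any serious obstacle; the main work is the bookkeeping check that Lemma \ref{HM8} is genuinely applicable and that its final constant is independent of $\mu$.
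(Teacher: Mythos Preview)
Your proposal is correct and follows exactly the paper's approach: the paper's proof consists of the single phrase ``by Lemma \ref{HM8}'', and you have simply unpacked why that citation is legitimate and why the resulting constant $M=\frac{(1+\bar{\sigma}^{2})L}{\eta-(1+\bar{\sigma}^{2})\alpha_{1}\alpha_{2}}$ is independent of $\epsilon$. The only minor remark is that you need not trace through the intermediate $T$-dependent inequality at all, since the statement of Lemma \ref{HM8} already records that $M$ depends only on $L,\eta,\alpha_{1},\alpha_{2},G$ and not on $\mu$; your extra detail is harmless but unnecessary.
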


Denote $\bar{v}^{\epsilon}(x)={v}^{\epsilon}(x)-{v}^{\epsilon}(0)$. Then $|\bar{v}^{\epsilon}(x)|\leq M|x|$ and $\epsilon{v}^{\epsilon}(0)\leq \frac{\alpha}{-\gamma^1-2G(\gamma^2)}.$
Note that $\bar{v}^{\epsilon}(x)$ is a $M$-Lipschitz function for each $\epsilon$.
Thus by a diagonal procedure we can construct  a
sequence $\epsilon_n\downarrow 0$ such that
$\bar{v}^{\epsilon_n}(x)\rightarrow v(x)$ for all $x\in\mathbb{R}^n$ and $\epsilon_n{v}^{\epsilon_n}(0)\rightarrow \lambda$, where $\lambda$ is a real number.

\begin{theorem}\label{my11}
Suppose assumptions \emph{(B1), (B2), (B4) } and \emph{(B5)} hold. Then for each $x$, the
$G$-EBSDE \eqref{AppHM} has a solution $(Y^x,Z^x,K^x,\lambda)\in\mathfrak{S}_{G}^{2}(0,\infty)\times\mathbb{R}$  such that $|Y^x_s|\leq M|X^x_s|$.
\end{theorem}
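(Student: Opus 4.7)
The plan is to construct the desired solution as a limit of the $\epsilon$-perturbed solutions $(Y^{x,\epsilon}, Z^{x,\epsilon}, K^{x,\epsilon})$ from Lemma \ref{HM4}. By the Markov identity $Y^{x,\epsilon}_s = v^\epsilon(X^x_s)$ (Lemma \ref{mw12} applied to the perturbed equation), the centered process $\bar{Y}^{x,\epsilon}_s := Y^{x,\epsilon}_s - v^\epsilon(0) = \bar{v}^\epsilon(X^x_s)$ satisfies $|\bar{Y}^{x,\epsilon}_s| \leq M|X^x_s|$ uniformly in $\epsilon$. Along the subsequence $\epsilon_n \downarrow 0$ already extracted above (via an Arzel\`a--Ascoli diagonal argument), $\bar{v}^{\epsilon_n} \to v$ locally uniformly with $|v(x)| \leq M|x|$, and $\epsilon_n v^{\epsilon_n}(0) \to \lambda$. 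The natural candidate for the ergodic solution is therefore $Y^x_s := v(X^x_s)$, which automatically delivers the target bound $|Y^x_s| \leq M|X^x_s|$.

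To produce $(Z^x, K^x)$ and verify the equation, I fix $T>0$ and view $\bar{Y}^{x,\epsilon_n}$ on $[0,T]$ as the unique finite-horizon $G$-BSDE solution with terminal $\bar{v}^{\epsilon_n}(X^x_T)$ and drivers
\begin{align*}
f^{\epsilon_n}(r,y,z) &= f(X^x_r,z)+\gamma^1\epsilon_n y+\gamma^1\epsilon_n v^{\epsilon_n}(0),\\
g^{\epsilon_n}_{ij}(r,y,z) &= g_{ij}(X^x_r,z)+\gamma^2_{ij}\epsilon_n y+\gamma^2_{ij}\epsilon_n v^{\epsilon_n}(0).
\end{align*}
I then apply the finite-horizon stability estimate of Theorem \ref{pro3.5} between two indices $\epsilon_n$ and $\epsilon_m$. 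The terminal difference $\bar{v}^{\epsilon_n}(X^x_T) - \bar{v}^{\epsilon_m}(X^x_T)$ is dominated by $2M|X^x_T|$ and tends to zero quasi-surely, so dominated convergence together with moment bounds on $X^x_T$ (inherited from (B2) and the arguments of Lemma \ref{HW2}) yields convergence to zero in $L^{2+\beta}_G(\Omega_T)$ for a suitable $\beta>0$. Evaluated at $(\bar{Y}^{x,\epsilon_m},Z^{x,\epsilon_m})$, the driver difference is controlled by
\begin{equation*}
|\gamma^1|(\epsilon_n+\epsilon_m)M|X^x_r| + C\bigl|\epsilon_n v^{\epsilon_n}(0)-\epsilon_m v^{\epsilon_m}(0)\bigr| + \text{analogous terms for } g,
\end{equation*}
which likewise vanishes in the relevant $M^{2+\beta}_G(0,T)$ norm. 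Theorem \ref{pro3.5} thus forces $(\bar{Y}^{x,\epsilon_n},Z^{x,\epsilon_n})$ to be Cauchy in $S^2_G(0,T)\times M^2_G(0,T)$, with limit $(Y^x,Z^x)$; passing to the limit in $\bar{Y}^{x,\epsilon_n}_s = \bar{v}^{\epsilon_n}(X^x_s)$ confirms $Y^x_s = v(X^x_s)$. Setting
\begin{equation*}
K^x_t := Y^x_0 - Y^x_t - \int_0^t[f(X^x_r,Z^x_r)+\gamma^1\lambda]\,dr - \int_0^t[g_{ij}(X^x_r,Z^x_r)+\gamma^2_{ij}\lambda]\,d\langle B^i,B^j\rangle_r + \int_0^t Z^x_r\,dB_r
\end{equation*}
yields a candidate quadruple $(Y^x,Z^x,K^x,\lambda)$ satisfying \eqref{AppHM} on $[0,T]$, and arbitrariness of $T$ produces a solution in $\mathfrak{S}^2_G(0,\infty)$.

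The main obstacle is verifying that $K^x$ is genuinely a decreasing $G$-martingale, not merely an adapted process. The $L^2_G(\Omega_T)$-convergence $K^{x,\epsilon_n}_T \to K^x_T$ can be read off from the BSDE identity combined with the $S^2_G$ and $M^2_G$ convergences; then, mimicking the concluding step of the existence proof of Theorem \ref{HM3},
\begin{equation*}
\hat{\mathbb{E}}\bigl[|\hat{\mathbb{E}}_t[K^x_s] - K^x_t|\bigr] \leq \hat{\mathbb{E}}[|K^x_s - K^{x,\epsilon_n}_s|] + \hat{\mathbb{E}}[|K^{x,\epsilon_n}_t - K^x_t|] \longrightarrow 0,
\end{equation*}
so $\hat{\mathbb{E}}_t[K^x_s] = K^x_t$; monotonicity is preserved because a quasi-sure subsequential limit of decreasing processes is decreasing. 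This closes the construction and the estimate $|Y^x_s|\leq M|X^x_s|$ follows from $Y^x_s = v(X^x_s)$ together with the Lipschitz bound on $v$.
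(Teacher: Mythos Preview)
Your proposal is correct, but it follows a different route from the paper's proof.

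The paper proceeds via the PDE side: having already established (just before the theorem) that $(v,\lambda)$ is a viscosity pair solution of the ergodic elliptic PDE \eqref{HM11}, it sets $Y^x_s:=v(X^x_s)$ and, for each fixed $T>0$, considers the associated \emph{parabolic} PDE with terminal condition $\phi(T,x)=v(x)$. Since $v$ itself (viewed as time-independent) solves that parabolic PDE, uniqueness of viscosity solutions forces $\phi(t,x)=v(x)$. The finite-horizon nonlinear Feynman--Kac formula of \cite{HJPS1} then directly produces a triple $(v(X^x_\cdot),Z^{x,T},K^{x,T})$ solving the $G$-BSDE on $[0,T]$ with terminal $v(X^x_T)$; here $K^{x,T}$ is automatically a decreasing $G$-martingale. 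Uniqueness of finite-horizon $G$-BSDEs gives consistency in $T$, so one simply glues the pieces $(Z^{x,T},K^{x,T})$ together.

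Your approach stays entirely on the BSDE side: you take the centered processes $\bar Y^{x,\epsilon_n}=\bar v^{\epsilon_n}(X^x_\cdot)$, apply the stability estimate (Theorem~\ref{pro3.5}) on each finite horizon to get a Cauchy sequence in $S^2_G\times M^2_G$, and identify the limit with $v(X^x_\cdot)$; the decreasing $G$-martingale property of $K^x$ is then recovered by the $L^2_G$-limit argument borrowed from Theorem~\ref{HM3}. This is a legitimate and self-contained construction that avoids invoking the parabolic Feynman--Kac machinery and the stability of viscosity solutions; the price is that you must do the Cauchy estimates by hand and separately justify the properties of $K^x$. The paper's route is shorter precisely because the Feynman--Kac representation packages the existence and the decreasing $G$-martingale property of $K$ in one stroke.
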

\begin{proof}
Denote $Y^x_t:=v(X^x_t)$ and $\bar{Y}^{x,\epsilon_n}_t:=Y^{x,\epsilon_n}_t-{v}^{\epsilon}(0)=\bar{v}^{\epsilon_n}(X^x_t)$ for each $n$. Then  we have for each $T>0$,
\begin{align*}
\bar{Y}_{s}^{x,\epsilon_n}  &  =\bar{Y}^{x,\epsilon_n}_T+\int_{s}^{T}[f(X_{r}^{x},Z_{r}^{x,\epsilon_n})+\gamma^1\epsilon_n \bar{Y}_{r}^{x,\epsilon_n}+\gamma^1\epsilon_n{v}^{\epsilon_n}(0)]dr-\int_{s}^{T}Z_{r}^{x,\epsilon_n}dB_{r}-(K_{T}^{x,\epsilon_n}-K_{s}^{x,\epsilon_n})\nonumber\\&
+\int_{s}^{T}[g_{ij}(X_{r}^{x}%
,Z_{r}^{x,\epsilon_n})+\gamma^2_{ij}\epsilon_n \bar{Y}_{r}^{x,\epsilon_n}+\gamma^2_{ij}\epsilon_n{v}^{\epsilon_n}(0)]d\langle B^i,B^j\rangle_{r}.
\end{align*}
Note that $\bar{v}^{\epsilon_n}(x)$ converges to $ v(x)$ uniformly on any compact subset of $\mathbb{R}^n$. Then for each $N>0$, we get that
\begin{align*}
&\lim\limits_{n\rightarrow\infty}\mathbb{\hat{E}}[\sup\limits_{t\in[0,T]}|\bar{Y}^{x,\epsilon_n}_t-Y^x_t|^2]\\
&\leq
\lim\limits_{n\rightarrow\infty}\mathbb{\hat{E}}[\sup\limits_{t\in[0,T]}|\bar{v}^{\epsilon_n}(X^x_t)-v(X^x_t)|^2\mathbf{1}_{|X^x_t|\leq N}]
+\lim\limits_{n\rightarrow\infty}\mathbb{\hat{E}}[\sup\limits_{t\in[0,T]}|\bar{v}^{\epsilon_n}(X^x_t)-v(X^x_t)|^2\mathbf{1}_{|X^x_t|\geq N}]
\\
& \leq \lim\limits_{n\rightarrow\infty}\sup\limits_{|x|\leq N}|\bar{v}^{\epsilon_n}(x)-v(x)|^2+2M^2\frac{\mathbb{\hat{E}}[\sup\limits_{t\in[0,T]}|X_t^x|^3]}{N}=2M^2\frac{\mathbb{\hat{E}}[\sup\limits_{t\in[0,T]}|X_t^x|^3]}{N}.
\end{align*}
Recalling Proposition 4.1 in \cite{HJPS1} and letting $N\rightarrow\infty$, we conclude that
\[
\lim\limits_{n\rightarrow\infty}\mathbb{\hat{E}}[\sup\limits_{t\in[0,T]}|\bar{Y}^{x,\epsilon_n}_t-Y^x_t|^2]=0.
\]
Applying Theorem \ref{pro3.5} (note that  $C$ can be taken as a generic constant independent of $n$, since $\epsilon_n$ is uniformly bounded), there exist two processes $Z^{x,T}_t$ and $K^{x,T}_t$ such that
\[
\lim\limits_{n\rightarrow\infty}\mathbb{\hat{E}}[\int^T_0|Z^{x,\epsilon_n}_t-Z^{x,T}_t|^2dt]=0, \ \ \lim\limits_{n\rightarrow\infty}\mathbb{\hat{E}}[|K^{x,\epsilon_n}_t-K^{x,T}_t|^2]=0.
\]
Moreover, $(Y^x,Z^{x,T}_t, K^{x,T}_t )$ satisfies the following equation
\begin{align*}
Y_{s}^{x}   =&Y^x_T+\int_{s}^{T}[f(X_{r}^{x},Z_{r}^{x,T})+\gamma^1\lambda]dr+\int_{s}^{T}[g_{ij}(X_{r}^{x}%
,Z_{r}^{x,T})+\gamma^2_{ij}\lambda]d\langle B^i,B^j\rangle_{r}\\
& \ \ \ \ \  -\int_{s}^{T}Z_{r}^{x,T}dB_{r}-(K_{T}^{x,T}-K_{s}^{x,T}).
\end{align*}
By the uniqueness of solution to $G$-BSDE, it is obvious $(Z_{r}^{x,T},K_{r}^{x,T})=(Z_{r}^{x,S},K_{r}^{x,S})$ for $S>T$.
Set $(Z^x_t,K^x_t)=(Z^{x,T}_t,K^{x,T}_t)$ for some $T\geq t$. Then $(Y^x,Z^x,K^x,\lambda)$
satisfies equation \eqref{AppHM}. The proof is complete.
\end{proof}

 Based on the $G$-EBSDE,
we  could show the  following fully nonlinear ergodic elliptic PDE has a viscosity pair solution:
\begin{align} \label{HM11}
&G(H(D_{x}^{2}{v},D_{x}{v},\lambda,x))+\langle
b(x),D_{x}{v}\rangle+f(x,D_{x}{v}\sigma(x))+\gamma^1\lambda=0,
\end{align}%
where
\begin{align*}
H_{ij}(D_{x}^{2}{v},D_{x}{v},\lambda,x)=  &  \langle D_{x}^{2}{v}\sigma_{i}%
(x),\sigma_{j}(x)\rangle+2\langle D_{x}{v},h_{ij}(x)\rangle
 +2g_{ij}(x,D_{x}{v}
\sigma(x))+2\gamma_{ij}^2\lambda.
\end{align*}
\begin{definition}\label{myq19}\emph{(i)} A viscosity pair subsolution (resp. suppersolution) of \eqref{HM11} is a
 pair $(u,\lambda)$ with   a real number $\lambda$  and a upper  (resp. lower)
semicontinuous function $u$, such that for all $x\in\mathbb{R}^n$ and $\varphi\in C^2(\mathbb{R}^n)$ satisfying
$\varphi(y)-u(y)\geq (\text{resp.} \leq)\varphi(x)-u(x)$ for each $y\in\mathbb{R}^n$, we have
\[
G(H(D_{x}^{2}{\varphi},D_{x}{\varphi},\lambda,x))+\langle
b(x),D_{x}{\varphi}\rangle+f(x,D_{x}\varphi\sigma
(x))+\gamma^1\lambda\geq (\text{resp.} \leq) 0.
\]
\emph{(ii)}  A viscosity pair solution of \eqref{HM11} is a
 pair $(u,\lambda)$ with   a real number  $\lambda$ and a continuous
function $u$, such that it is simultaneously a viscosity  pair subsolution and a  viscosity  pair suppersolution.
\end{definition}

\begin{remark}
{\upshape
Note that the equation \eqref{HM11} is a fully nonlinear elliptic PDE in $(D^2_xv, \lambda)$,
which is different from the previous works (see \cite{BF,CF,Fuj,H} and  the references
therein).
}
\end{remark}
\begin{theorem}
Assume assumptions \emph{(B1), (B2), (B4) } and \emph{(B5)} hold. Then ergodic PDE \eqref{HM11} has a  viscosity pair solution $(v,\lambda)$.
\end{theorem}
\begin{proof}
Consider $(v,\lambda)$ given in Theorem \ref{my11}. For each $x$, denote $Y^x_s:=v(X^x_s)$. Then we have for each $T>0$,
\begin{align}\label{HW3}
v(X^x_s) &  =v(X^x_T)+\int_{s}^{T}[f(X_{r}^{x},Z_{r}^{x})+\gamma^1\lambda]dr+\int_{s}^{T}[g_{ij}(X_{r}^{x}%
,Z_{r}^{x})+\gamma_{ij}^2\lambda]d\langle B^i,B^j\rangle_{r}\nonumber\\
& \ \ -\int_{s}^{T}Z_{r}^{x}dB_{r}-(K_{T}^{x}-K^{x}_s).
\end{align}
By the nonlinear Feynman-Kac formula in \cite{HJPS1}, we obtain $v$ is the unique viscosity solution to the following parabolic PDE:
\begin{align*}
\begin{cases}
& \partial_t\phi(t,x)+G(H(D_{x}^{2}\phi,D_{x}\phi,\lambda,x))+
\langle b(x), D_{x}\phi\rangle+f(x,\langle\sigma_{1}(x),D_{x}{\phi}\rangle,\ldots,\langle\sigma
_{d}(x),D_{x}{\phi}\rangle)+\gamma_1\lambda=0,
\\& \phi(T,x)=v(x).
\end{cases}
\end{align*}
Then by the Definition \ref{myq19}, one  can easily check that $(v,\lambda)$ is a viscosity pair solution of \eqref{HM11}.
\end{proof}
\begin{remark}{\upshape
Note that the nonlinear expectation theory is a useful tool to deal with nonlinear  ergodic  problems and we intend to carry  over these ideas to more general cases, for example, HJB equations and nonlinear ``invariant measures'' (see \cite{HL1}).
}\end{remark}

It is obvious the solution to $G$-EBSDE \eqref{AppHM} is not unique.
 Indeed the equation is invariant
with respect to addition of a constant to $Y$.   However we have a uniqueness result for $\lambda$ under some additional condition.

\begin{theorem}
If for some $x\in\mathbb{R}^n$, $(Y^{\prime,x},Z^{\prime,x}, K^{\prime,x},\lambda^{\prime})\in\mathfrak{S}_{G}^{2}(0,\infty)\times\mathbb{R}$ verifies equation \eqref{AppHM}. Moreover, there exists some constant $c^x > 0$ such that\[
|Y^{\prime,x}_s|\leq c^x(1+|X^x_s|).
\]
Then $\lambda^{\prime}=\lambda.$
\end{theorem}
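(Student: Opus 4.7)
The plan is to apply the linearization machinery of Lemma \ref{the5.2} to the difference of the two solutions, producing a representation of $\hat{Y}_0 := Y^x_0 - Y^{\prime,x}_0$ in which $\kappa := \lambda - \lambda'$ appears multiplied by a term of linear order in $T$, while the remaining terms stay uniformly bounded in $T$. If $\kappa \neq 0$, sending $T \to \infty$ would force $\hat{Y}_0 = \pm\infty$, contradicting finiteness of $\hat Y_0$.

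Fix $T > 0$ and subtract the two $G$-EBSDEs on $[0,T]$; moving $K^{\prime,x}$ to the left as in the proof of Theorem \ref{HM3}, the pair $(\tilde{Y},\hat{Z})$ with $\tilde{Y} := \hat{Y} + K^{\prime,x}$ solves a finite-horizon $G$-BSDE with decreasing $G$-martingale $K^x$, generator affine in $\hat{Z}$ with bounded coefficients $b_s, d_{ij,s}$ (by $\alpha_2$-Lipschitz continuity of $f$, $g_{ij}$ in $z$) and constant drift terms $\gamma^1\kappa$ and $\gamma^2_{ij}\kappa$. Since $f,g_{ij}$ do not depend on $y$, the coefficients ``$a$'' and ``$c$'' of Lemma \ref{the5.2} vanish, so the auxiliary $\tilde{G}$-SDE solution $X$ is a symmetric positive $\tilde{G}$-martingale. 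Using $K^{\prime,x}_0 = 0$, Lemma \ref{the5.2} yields
\[
\hat{Y}_0 \;=\; \mathbb{\hat{E}}^{\tilde{G}}\Bigl[X_T\hat{Y}_T + X_TK^{\prime,x}_T + \gamma^1\kappa\int_0^T X_s\,ds + \gamma^2_{ij}\kappa\int_0^T X_s\,d\langle B^i,B^j\rangle_s\Bigr].
\]

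Suppose for contradiction $\kappa > 0$. Since $\gamma^2_{ij}d\langle B^i,B^j\rangle_s \leq 2G(\gamma^2)\,ds$ quasi-surely, the two $\kappa$-terms are dominated q.s.\ by $-\kappa\delta\int_0^T X_s\,ds$ where $\delta := -(\gamma^1 + 2G(\gamma^2)) > 0$; moreover $X_TK^{\prime,x}_T \leq 0$ q.s.\ because $K^{\prime,x}\le 0$ and $X > 0$. Subadditivity of $\mathbb{\hat{E}}^{\tilde G}$ then gives
\[
\hat{Y}_0 \;\leq\; \mathbb{\hat{E}}^{\tilde{G}}[X_T\hat{Y}_T] \;-\; \kappa\delta T,
\]
the symmetry of $X$ ensuring $\mathbb{\hat{E}}^{\tilde{G}}[\int_0^T X_s\,ds] = T$ (each representing measure makes $X$ a true martingale with $E[X_s]=1$, so both sup and inf over the family equal $T$). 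The bound $|\hat{Y}_T| \leq (M + c^x)(1+|X^x_T|)$ (from the preceding existence theorem together with the hypothesis) combined with Lemma \ref{HW2}(ii) and $\mathbb{\hat{E}}^{\tilde{G}}[X_T]=1$ keeps $\mathbb{\hat{E}}^{\tilde{G}}[X_T\hat{Y}_T]$ bounded by a constant depending only on $x$, uniformly in $T$. Letting $T \to \infty$ produces a contradiction; the case $\kappa < 0$ follows by swapping the two solutions. Thus $\lambda = \lambda'$.

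The main obstacle is the sublinearity of $\mathbb{\hat{E}}^{\tilde{G}}$, which forbids clean term-by-term separation of the representation formula. Two features of this setup rescue the argument: the absence of $y$-dependence in $f, g_{ij}$ makes $X$ a \emph{symmetric} $\tilde{G}$-martingale, giving the exact identity $\mathbb{\hat{E}}^{\tilde{G}}[\int_0^T X_s\,ds] = T$; and the at-most-linear growth bound on $Y^{\prime,x}$, together with Lemma \ref{HW2}(ii), provides a $T$-uniform control of $\mathbb{\hat{E}}^{\tilde{G}}[X_T\hat{Y}_T]$. The latter is precisely why the theorem has to require the linear growth condition on $Y^{\prime,x}$.
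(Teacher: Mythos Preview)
Your approach is essentially the paper's: linearize the difference equation in $z$, apply Lemma \ref{the5.2} to obtain a representation in which the $\hat\lambda$-contribution is $(\gamma^1+2G(\gamma^2))\hat\lambda\,T$ while $\mathbb{\hat E}^{\tilde G}[X_T\hat Y_T]$ stays bounded via Lemma \ref{HW2}(ii), then send $T\to\infty$. The paper phrases it as a direct inequality (assuming WLOG $\lambda\ge\lambda'$ and concluding $\lambda-\lambda'\le C(1+|x|)/T$) rather than a contradiction, and it justifies $\mathbb{\hat E}^{\tilde G}[\int_0^T X_s\,ds]=T$ via It\^o's formula on $tX_t$ rather than by invoking symmetry of $X$, but these are cosmetic differences.

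One technical point you gloss over: you cannot simply declare the generator ``affine in $\hat Z$ with bounded coefficients $b_s,d_{ij,s}$''. In the $G$-framework the coefficients must lie in $M_G^2(0,T)$ for Lemma \ref{the5.2} to apply, and the naive difference quotient $(f(X^x_s,Z^x_s)-f(X^x_s,Z^{\prime,x}_s))/(Z^x_s-Z^{\prime,x}_s)$ need not be in that space. This is exactly why the paper invokes the $\varepsilon$-mollified coefficients $b_s^{\varepsilon},d_s^{\varepsilon}$ of Lemma \ref{HW4}, carrying an extra $O(\varepsilon T)$ error which is sent to zero before $T\to\infty$. Your argument goes through once this device is inserted; without it the application of Lemma \ref{the5.2} is not justified.
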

\begin{proof}
Without loss of generality, assume $d=1$ and $\lambda\geq \lambda^{\prime}$.
Set $(\hat{Y},\hat{Z},\hat{\lambda})=(Y^{x}-Y^{\prime,x},Z^{x}-Z^{\prime,x},\lambda-\lambda^{\prime})$.
Then we have for each $T$ and $\epsilon$,
\[
\hat{Y}_{t}+K_{t}^{\prime,x}=\hat{Y}_T+K_{T}^{\prime,x}+\int_{t}^{T}[\hat{f}_{s}+\gamma^1\hat{\lambda}]ds+\int%
_{t}^{T}[\hat{g}_{s}+\gamma^2\hat{\lambda}]d\langle B\rangle_{s}-\int_{t}^{T}\hat{Z}_{s}dB_{s}%
-(K_{T}^{x}-K_{t}^{x}),
\]
where $\hat{f}_{s}=f(X^x_s,Z_{s}
^{x})-f(X^x_s,Z_{s}
^{x,\prime})=b_{s}^{\epsilon}\hat{Z}%
_{s}+m^{\epsilon}_s$, $ \hat{g}_{s}=g(X^x_s,Z_{s}
^{x})-g(X^x_s,Z_{s}
^{x,\prime})=d_{s}^{\epsilon}\hat{Z}
_{s}+n^{\epsilon}_s$, $|m^{\epsilon}_s|\leq 2\alpha_2\epsilon$ and $|n^{\epsilon}_s|\leq 2\alpha_2\epsilon$.
By a similar analysis as in Theorem \ref{HM3}, we obtain
\begin{align*}
\hat{Y}_{0}&  \leq \mathbb{\hat{E}}^{\tilde{G}}%
[X_{T}^{\epsilon}\hat{Y}_{T}+\int_{0}^{T}\gamma^1\hat{\lambda}X_{s}^{\epsilon}d{s}+\int_{0}^{T}\gamma^2\hat{\lambda}X_{s}^{\epsilon}d\langle B\rangle_{s}]+
\mathbb{\hat{E}}^{\tilde{G}}%
[\int_{0}^{T}m^{\epsilon}_{s}X^{\epsilon}_{s}d{s}+\int_{0}^{T}n^{\epsilon}_{s}X^{\epsilon}_{s}d\langle B\rangle_{s}]\\
& \leq \mathbb{\hat{E}}^{\tilde{G}}%
[X^{\epsilon}_{T}\hat{Y}_{T}]+\mathbb{\hat{E}}^{\tilde{G}}[\int_{0}^{T}(\gamma^1\hat{\lambda}+2G(\gamma^2\hat{\lambda}))
X^{\epsilon}_{s}d{s}]+2(1+\bar{\sigma}^2)\alpha_2T\epsilon,
\end{align*}
where $\{X^{\epsilon}_{t}\}_{t\in\lbrack0,T]}$ is the solution of the
following $\tilde{G}$-SDE:%
\[
X^{\epsilon}_{t}=1+\int_{0}^{t}d_{s}^{\epsilon}X^{\epsilon}_{s}dB_{s}+\int_{0}%
^{t}b^{\epsilon}_{s}X^{\epsilon}_{s}d\tilde{B}_{s}.
\]
By the $G$-It\^{o}'s formula, we derive that $\mathbb{\hat{E}}^{\tilde{G}}[\int_{0}^{T}(\gamma^1\hat{\lambda}+2G(\gamma^2\hat{\lambda}))
X^{\epsilon}_{s}d{s}]=\mathbb{\hat{E}}^{\tilde{G}}[(\gamma^1+2G(\gamma^2))\hat{\lambda}X^{\epsilon}_{T}T]=
(\gamma^1+2G(\gamma^2))\hat{\lambda}T$.
Consequently,
\begin{align*}
\hat{Y}_{0}\leq \mathbb{\hat{E}}^{\tilde{G}}%
[X^{\epsilon}_{T}\hat{Y}_{T}]+(\gamma^1+2G(\gamma^2))\hat{\lambda}T+2(1+\bar{\sigma}^2)\alpha_2T\epsilon.
\end{align*}
 Recalling Lemma \ref{HW2}, there exists some constant $C$ such that
\[
\mathbb{\hat{E}}^{\tilde{G}}%
[|X^{\epsilon}_{T}\hat{Y}_{T}|]\leq C(1+|x|).
\]
Thus letting $\epsilon\rightarrow 0$,  we can find some constant $C$ depending on $G$ and $c_x, \bar{C}$ such that for each $T$,
\[
\lambda-{\lambda}^{\prime}\leq \frac{C}{T}(1+|x|).
\]
Consequently, letting $ T \rightarrow\infty$  yields  that $\lambda\leq {\lambda}^{\prime}$,
which concludes the result.
\end{proof}

\section{Applications}
\subsection{Large time behaviour of solutions to fully nonlinear PDEs}
In this section, we shall apply the $G$-EBSDEs to obtain the large time behaviour of solutions to fully nonlinear PDEs where the diffusion term may be degenerate.
 Let us consider the following $G$-EBSDE:
\begin{align}
Y_{s}^{x}   =Y^x_T+\int_{s}^{T}[f(X_{r}^{x},Z_{r}^{x})-\lambda]dr+\int_{s}^{T}g_{ij}(X_{r}^{x}%
,Z_{r}^{x})d\langle B^i,B^j\rangle_{r} -\int_{s}^{T}Z_{r}^{x}dB_{r}-(K_{T}^{x}-K_{s}^{x}),
\label{AppHMW}%
\end{align}
and the fully nonlinear  ergodic PDE:
\begin{align} \label{HW11}
&G(H(D_{x}^{2}{v},D_{x}{v},x))+\langle
b(x),D_{x}{v}\rangle+f(x,D_{x}{v}\sigma
(x))=\lambda,
\end{align}%
where
\begin{align*}
H_{ij}(D_{x}^{2}{v},D_{x}{v},x)=  &  \langle D_{x}^{2}{v}\sigma_{i}%
(x),\sigma_{j}(x)\rangle+2\langle D_{x}{v},h_{ij}(x)\rangle+2g_{ij}(x,D_{x}{v}
\sigma(x)).
\end{align*}
From the  section 5,  the $G$-EBSDE \eqref{AppHMW} and  the  fully
nonlinear ergodic PDE \eqref{HW11} both have  solutions. Moreover, the constant $\lambda$ in the ergodic equation \eqref{HW11} is
unique.

For each Lipschitz function $\varphi:\mathbb{R}^n\rightarrow\mathbb{R}$, consider the following fully nonlinear parabolic PDE:
\begin{align}
\begin{cases}
& \partial_tu(t,x)-G(H(D_{x}^{2}u,D_{x}u,x))-
\langle b(x),D_{x}u\rangle -f(x,D_{x}u\sigma(x))=0,
\\& u(0,x)=\varphi(x).
\end{cases}
\end{align}
Denote $u^T(t,x):=u(T-t,x)$ for each $T>0$. Then $u^T(t,x)$ is the unique viscosity solution of PDE:
\begin{align*}
\begin{cases}
& \partial_tu^T(t,x)+G(H(D_{x}^{2}u^T,D_{x}u^T,x))+
\langle b(x),D_{x}u^T\rangle +f(x,D_{x}{u^T}\sigma(x))=0,
\\& u^T(T,x)=\varphi(x).
\end{cases}
\end{align*}
\begin{theorem}
Under assumptions \emph{(B1), (B2), (B4)} and \emph{(B5)}, there exists a constant $C$ such that, for each $T>0$,
\[
|\frac{u(T,x)}{T}-\lambda|\leq \frac{C(1+|x|)}{T}.
\]
In particular,
\[
\lim\limits_{T\rightarrow\infty}\frac{u(T,x)}{T}=\lambda.
\]
\end{theorem}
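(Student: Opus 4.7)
The strategy is to compare the parabolic value $u(T,x)$ with the linear-in-time approximation $v(x) + \lambda T$ produced by the ergodic BSDE. First, the nonlinear Feynman-Kac formula of \cite{HJPS1}, applied to the time-reversed function $u^T(t,x) := u(T-t,x)$, identifies $u(T,x) = \bar Y_0^{x,T}$, where $(\bar Y^{x,T}, \bar Z^{x,T}, \bar K^{x,T})$ is the unique solution on $[0,T]$ of the finite-horizon $G$-BSDE with terminal condition $\varphi(X_T^x)$ and generators $(f,g_{ij})$. Given the ergodic solution $(Y^x,Z^x,K^x,\lambda)$ from Section 5, with $Y_s^x = v(X_s^x)$ and $v$ an $M$-Lipschitz function vanishing at $0$, I set $W_s := Y_s^x + \lambda(T-s)$. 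Absorbing the $-\lambda$ term, $W$ satisfies the same $G$-BSDE on $[0,T]$ as $\bar Y^{x,T}$ but with terminal condition $v(X_T^x)$ in place of $\varphi(X_T^x)$, and $W_0 = v(x) + \lambda T$. The theorem therefore reduces to establishing
\[
|W_0 - \bar Y_0^{x,T}| \leq C(1+|x|)
\]
with $C$ independent of $T$.

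To prove this, I exploit that the generators in \eqref{AppHMW} depend only on $(x,z)$, not on $y$. The linearization of the difference then involves only $z$-coefficients, which may be chosen exactly (with no $\varepsilon$-error): there exist bounded predictable processes $b, d^{ij}$ with $|b|,|d^{ij}| \leq \alpha_2$ such that $f(X_r^x,Z_r^x)-f(X_r^x,\bar Z_r^{x,T}) = b_r \hat Z_r$ and the analogous identity for $g_{ij}$, where $\hat Z = Z^x - \bar Z^{x,T}$. Following the computation in the proof of the comparison theorem (Theorem \ref{HM7}), but in the simpler case $a = c = m = n = 0$ of Lemma \ref{the5.2}, I obtain in the extended $\tilde G$-expectation space
\[
|W_0 - \bar Y_0^{x,T}| \leq \hat{\mathbb{E}}^{\tilde G}\bigl[X_T\,|v(X_T^x) - \varphi(X_T^x)|\bigr],
\]
where $X_t = 1 + \int_0^t d_s X_s\,dB_s + \int_0^t b_s X_s\,d\tilde B_s$ is a $\tilde G$-martingale. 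Using the Lipschitz continuity of $v$ and $\varphi$, $|v(y)-\varphi(y)| \leq C_0(1+|y|)$. The martingale property gives $\hat{\mathbb{E}}^{\tilde G}[X_T] = 1$, and Lemma \ref{HW2}(ii) gives $\hat{\mathbb{E}}^{\tilde G}[|X_T^x|X_T] \leq \bar C(1+|x|)$ with $\bar C$ independent of $T$. Hence $|W_0 - \bar Y_0^{x,T}| \leq C(1+|x|)$. Substituting $W_0 = v(x)+\lambda T$ and using $|v(x)| \leq M|x|$, division by $T$ yields $|u(T,x)/T - \lambda| \leq C'(1+|x|)/T$, and the limit follows.

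The main obstacle is that the standard finite-horizon a priori estimate (Theorem \ref{pro3.5}) has a constant depending on $T$ and cannot be used to control $W - \bar Y^{x,T}$ uniformly as $T \to \infty$. Two features of the ergodic setting rescue the argument: because $f, g_{ij}$ are $y$-independent, the coefficients $a, c$ in Lemma \ref{the5.2} vanish, so the auxiliary process $X$ is a genuine $\tilde G$-martingale with no $\exp(\int_0^t a_s\,ds)$ drift that could blow up in $T$; and the dissipativity assumptions (B4)-(B5) enter through Lemma \ref{HW2}(ii) to bound $\hat{\mathbb{E}}^{\tilde G}[|X_T^x|X_T]$ uniformly in $T$. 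These two ingredients, set up precisely for this purpose in Sections 3-5, are what make the comparison estimate $T$-uniform.
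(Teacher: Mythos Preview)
Your strategy matches the paper's proof essentially line for line: identify $u(T,x)$ via the Feynman--Kac formula, compare with the ergodic solution $v(X_\cdot^x)$ (your shift $W_s=Y_s^x+\lambda(T-s)$ is just a repackaging of the paper's $\hat Y_0=u(T,x)-v(x)$ together with the $\lambda T$ coming out of the integral), linearize in $z$, apply the representation of Lemma~\ref{the5.2}, and conclude using Lemma~\ref{HW2}(ii) for the $T$-uniform bound on $\hat{\mathbb{E}}^{\tilde G}[|X_T^x|X_T]$.

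There is one technical point where your write-up departs from the paper and is not quite rigorous in the $G$-framework. You assert that because the generators are $y$-independent, the linearization coefficients $b,d^{ij}$ ``may be chosen exactly (with no $\varepsilon$-error)''. In classical probability that is fine, but here Lemma~\ref{the5.2} requires $b_s,d_s\in M_G^2(0,T)$, and the naive ratio $b_s=(f(X_s^x,Z_s^x)-f(X_s^x,\bar Z_s^{x,T}))/\hat Z_s$ is not obviously in $M_G^2$: this space is a completion of simple processes, and discontinuous functions of $M_G^2$-processes need not stay in the space. This is precisely why the paper introduces Lemma~\ref{HW4} and carries the $\varepsilon$-approximation through, obtaining an extra error $2(1+\bar\sigma^2)\alpha_2 T\varepsilon$ that is then sent to zero for fixed $T$. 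The fix is routine---just keep the $\varepsilon$ and let it vanish at the end---but your claim that it can be dispensed with overlooks a genuine subtlety of the $G$-calculus. Also note that the representation yields a one-sided inequality (because of the $K$-terms), so the bound on $|W_0-\bar Y_0^{x,T}|$ requires running the argument twice with the roles of the two solutions exchanged, as the paper does.
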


\begin{proof}
For  convenience, assume $d=1$.
Recalling nonlinear Feynman-Kac formula in \cite{HJPS1}, we obtain for each $s\in[0,T]$,
\begin{align*}
u^T(s,X^x_s) &  =\varphi(X^x_T)+\int_{s}^{T}f(X_{r}^{x},Z_{r}^{x,T})dr+\int_{s}^{T}g(X_{r}^{x}%
,Z_{r}^{x,T})d\langle B\rangle_{r}
 -\int_{s}^{T}Z_{r}^{x,T}dB_{r}-(K_{T}^{x,T}-K^{x,T}_s).
\end{align*}
From equation \eqref{HW3}, we conclude
\[
\hat{Y}_{t}+K_{t}^{x}=\hat{Y}_T+K_{T}^{x}+\int_{t}^{T}[\hat{f}_{s}+{\lambda}]ds+\int%
_{t}^{T}\hat{g}_{s}d\langle B\rangle_{s}-\int_{t}^{T}\hat{Z}_{s}dB_{s}%
-(K_{T}^{x,T}-K_{t}^{x,T}),
\]
where $(\hat{Y},\hat{Z})=(u^T(\cdot,X^x_{\cdot})-Y^{x},Z^{x,T}-Z^{x})$, $\hat{f}_{s}=f(X^x_s,Z_{s}
^{x,T})-f(X^x_s,Z_{s}
^{x})=b_{s}^{\epsilon}\hat{Z}%
_{s}+m^{\epsilon}_s$ and $\hat{g}_{s}=g(X^x_s,Z_{s}
^{x,T})-g(X^x_s,Z_{s}
^{x})=d_{s}^{\epsilon}\hat{Z}%
_{s}+n_s^{\epsilon}$ for each $\epsilon>0$. Here $|m^{\epsilon}_s|\leq 2\alpha_2\epsilon $ and  $|n^{\epsilon}_s|\leq 2\alpha_2\epsilon $.
By a standard argument, we derive that, in the extended space,
\begin{align*}
\hat{Y}_{0}&  \leq \mathbb{\hat{E}}^{\tilde{G}}%
[X^{\epsilon}_T\hat{Y}_{T}+\int_{0}^{T}{\lambda}X^{\epsilon}_{s}d{s}]+2(1+\bar{\sigma}^2)LT\epsilon=\mathbb{\hat{E}}^{\tilde{G}}%
[X^{\epsilon}_{T}\hat{Y}_{T}]+{\lambda}T+2(1+\bar{\sigma}^2)\alpha_2T\epsilon,
\end{align*}
where $\{X^{\epsilon}_{t}\}_{t\in\lbrack0,T]}$ is the solution of the
following $\tilde{G}$-SDE:%
\[
X^{\epsilon}_{t}=1+\int_{0}^{t}d_{s}^{\epsilon}X^{\epsilon}_{s}dB_{s}+\int_{0}%
^{t}b^{\epsilon}_{s}X^{\epsilon}_{s}d\tilde{B}_{s}.
\]
Denote by $C_0$ a constant that depends only on $v$ and $\varphi$, which is
allowed to change from line to line. Consequently, we have
\[
u(T,x)-v(x)-\lambda T\leq \mathbb{\hat{E}}^{\tilde{G}}%
[X^{\epsilon}_{T}|\varphi(X^x_T)-v(X^x_T)|]+2(1+\bar{\sigma}^2)\alpha_2T\epsilon\leq C_0\mathbb{\hat{E}}^{\tilde{G}}%
[X^{\epsilon}_{T}(1+|X^x_T|)]+2(1+\bar{\sigma}^2)\alpha_2T\epsilon.
\]
In a similar way, we can also get \[
v(x)+\lambda T-u(T,x)\leq C_0\mathbb{\hat{E}}^{\tilde{G}}%
[X^{\epsilon}_{T}(1+|X^x_T|)]+2(1+\bar{\sigma}^2)\alpha_2T\epsilon.
\]
Sending $\epsilon\rightarrow0$ and recalling Lemma \ref{HW2},  there exists  some constant $C$ depending on $G$ and $M,C_0,L$ such that for each $T$,
\[
|u(T,x)-v(x)-\lambda T|\leq C(1+|x|),
\]
which ends the proof.
\end{proof}

\begin{remark}{\upshape
Suppose $f(x,z)$ and $g(x,z)$ are independent of $z$.
One can easily show that\begin{align*}
v^{\epsilon}(x)=&\lim\limits_{T\rightarrow\infty}\mathbb{\hat{E}}[\exp(-\epsilon T)Y^{x,\epsilon}_T+\int_{0}^T\exp(-\epsilon s)f(X_{s}^{x})ds+\int_{0}^{T}\exp(-\epsilon s)g_{ij}(X_{s}^{x})d\langle B^i,B^j\rangle_{s}]\\
=&\mathbb{\hat{E}}[\int_{0}^{\infty}\exp(-\epsilon s)f(X_{s}^{x})ds+\int_{0}^{\infty}\exp(-\epsilon s)g_{ij}(X_{s}^{x})d\langle B^i,B^j\rangle_{s}].
\end{align*}
Then we obtain
\begin{align*}
&\lim\limits_{T\rightarrow\infty}\frac{1}{T}\mathbb{\hat{E}}[\int_{0}^{T}f(X_{s}^{x})ds+\int_{0}^{T}g_{ij}(X_{s}^{x})d\langle B^i,B^j\rangle_{s}]\\
&=\lim\limits_{\epsilon\rightarrow0}\epsilon\mathbb{\hat{E}}[\int_{0}^{\infty}\exp(-\epsilon s)f(X_{s}^{x})ds+\int_{0}^{\infty}\exp(-\epsilon s)g_{ij}(X_{s}^{x})d\langle B^i,B^j\rangle_{s}]\\
&=\lambda,
\end{align*}
which can be seen as Abelian-Tauberian Theorem under $G$-expectation framework.
}
\end{remark}
\begin{remark}{
\upshape
Note that  Fujita, Ishii and Loreti \cite{Fuj} (see also \cite{N1} for further research) studied the asymptotics of semi-linear PDE through analytic approaches under nondegeneracy assumption on the diffusion term. }
\end{remark}
\begin{remark}{
\upshape
Remark that from the results of
 Chapter V in Peng \cite{P10}, we can extend our result to the  case that the sublinear function $G$ is degenerate and $f,g$  is independent of $z$.
 In a different setting, Cosso, Fuhrman and Pham \cite{CF}  used a tricky BSDE approach to obtain
the large time behavior of solutions to general HJB equations, where $f$ does not contain $z$. An interesting question is how to obtain the rate of convergence.
}
\end{remark}
\subsection{Optimal ergodic control under model uncertainty}
The objective of this section is to study optimal ergodic control problems under the model uncertainty.
Let $U$ be a closed subset of $\mathbb{R}^n$. We define a control $u_s\in M^{2}_G(0,\infty)$ as a
$U$-valued process. Let $R:  U\mapsto\mathbb{R}^d $  and $\kappa: \mathbb{R}^n\times U\mapsto\mathbb{R} $ be two bounded $L$-Lipschitz functions.
Moreover, $|R(u)|\leq \alpha_2$.
For each control $u_s\in M^{2}_G(0,\infty)$,
we introduce the following Girsanov transformation  under $G$-expectation framework, which is given in \cite{HJPS1}.
For each $T>0$ and $\xi\in L_G^{2}(\Omega_T)$, consider the following $G$-BSDE:%
\[
Y_{t}=\xi+\int_{t}^{T}R(u_s)Z_{s}ds-\int_{t}^{T}Z_{s}dB_{s}-(K_{T}-K_{t}),
\]
Then $\mathbb{\tilde{E}}^u%
_{t}[\xi]:=Y_{t}$  is a consistent sublinear expectation and $B^u_t:=B_{t}-\int_{0}^{t}R(u_s)d{s}$ is a $G$-Brownian motion under
$\mathbb{\tilde{E}}^u$.

Under the model uncertainty, the nonlinear ergodic cost corresponding to $u$ and the starting
point $x$ is
\begin{align}\label{o1}
J(x,u)=\limsup\limits_{T\rightarrow\infty}\frac{1}{T}\mathbb{\tilde{E}}^{u}[\int^T_0\kappa(X^x_s,u_s)ds].
\end{align}
Our purpose is to minimize costs $J$  over all controls.
Then define the Hamiltonian
in the usual way
\begin{align}\label{o2}
f(x,z)=\inf\limits_{u}(\kappa(x,u)+R(u)z).
\end{align}

From section 5,  the $G$-EBSDE \eqref{AppHMW} ($g=0$) has a solution $(Y^x,Z^x, K^x,\lambda)$ such that
\[
|Y^x_s|\leq M|X^x_s|.
\]

\begin{theorem}
Suppose  assumptions \emph{(B1), (B2), (B4)} and \emph{(B5)} hold. If for some $x\in\mathbb{R}^n$, $(Y,Z, K,\lambda^{\prime})\in\mathfrak{S}_{G}^{2}(0,\infty)\times\mathbb{R}$ satisfies equation \eqref{AppHMW}. Moreover, there exists a constant $c^x > 0$ such that\[
|Y_s|\leq c^x(1+|X^x_s|).
\]
Then for any control $u\in M^{2}_G(0,\infty)$, we have $J(x, u) \geq \lambda^{\prime} = \lambda$, and the equality holds if and only if
for almost every $t$
\[ f(X^x_t,Z_t)=\kappa(X^x_t,u_t)+R(u_t)Z_t.\]
\end{theorem}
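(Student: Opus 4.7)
The plan is to recast the $G$-EBSDE \eqref{AppHMW} (with $g\equiv 0$) under the sublinear expectation $\tilde{\mathbb{E}}^u$ via the $G$-Girsanov transformation of \cite{HJPS1}, and then to exploit the infimum in the Hamiltonian definition \eqref{o2}. Substituting $dB_r = dB^u_r + R(u_r)\,dr$ into the equation satisfied by $(Y, Z, K, \lambda')$ gives
\begin{align*}
Y_s = Y_T + \int_s^T \bigl[f(X^x_r, Z_r) - R(u_r) Z_r - \lambda'\bigr]\, dr - \int_s^T Z_r\, dB^u_r - (K_T - K_s).
\end{align*}
Under $\tilde{\mathbb{E}}^u$, $B^u$ is a $G$-Brownian motion and $K$ remains a decreasing $G$-martingale, so on every finite horizon $(Y,Z,K)$ solves a $G$-BSDE under $\tilde{\mathbb{E}}^u$. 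The standard $G$-BSDE representation then yields
\begin{align*}
Y_0 = \tilde{\mathbb{E}}^u\!\left[Y_T + \int_0^T \bigl\{f(X^x_r, Z_r) - R(u_r) Z_r\bigr\}\, dr\right] - \lambda' T.
\end{align*}

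By \eqref{o2} we have the pointwise bound $f(X^x_r, Z_r) - R(u_r) Z_r \leq \kappa(X^x_r, u_r)$, with equality exactly when $u_r$ realizes the infimum at $(X^x_r, Z_r)$. Monotonicity followed by sub-additivity of $\tilde{\mathbb{E}}^u$ gives the main inequality
\begin{align*}
Y_0 + \lambda' T \leq \tilde{\mathbb{E}}^u[|Y_T|] + \tilde{\mathbb{E}}^u\!\left[\int_0^T \kappa(X^x_r, u_r)\, dr\right].
\end{align*}
To conclude I need $\tilde{\mathbb{E}}^u[|Y_T|] = o(T)$. The hypothesis $|Y_s| \leq c^x(1 + |X^x_s|)$ reduces this to a uniform bound on $\tilde{\mathbb{E}}^u[|X^x_T|]$. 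Under $\tilde{\mathbb{E}}^u$ the forward process $X^x$ acquires the bounded extra drift $\sigma(X^x_s)R(u_s)$; since $|R(u_s)| \leq \alpha_2$, assumptions (B4)--(B5) leave a dissipativity margin $\eta - (1+\bar{\sigma}^2)\alpha_1 \alpha_2 > 0$ that absorbs the perturbation, and a direct adaptation of Lemma \ref{HW2} produces $\tilde{\mathbb{E}}^u[|X^x_T|] \leq \bar{C}(1 + |x|)$ uniform in $T$. Dividing the main inequality by $T$ and taking $\limsup$ gives $J(x, u) \geq \lambda'$; the identity $\lambda' = \lambda$ is exactly the uniqueness assertion of the preceding theorem applied to $(Y, Z, K, \lambda')$, whose growth hypothesis is matched by $|Y_s|\le c^x(1+|X^x_s|)$.

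For the equivalence, set $\Delta_r := \kappa(X^x_r, u_r) + R(u_r) Z_r - f(X^x_r, Z_r) \geq 0$. The ``if'' direction is direct: when $\Delta \equiv 0$ q.s.\ for a.e.\ $r$, the monotonicity step above becomes an equality, giving $Y_0 + \lambda' T = \tilde{\mathbb{E}}^u[Y_T + \int_0^T \kappa(X^x_r, u_r)\, dr]$, and the two-sided bound $\bigl|\tilde{\mathbb{E}}^u[Y_T + \int_0^T \kappa\, dr] - \tilde{\mathbb{E}}^u[\int_0^T \kappa\, dr]\bigr| \leq \tilde{\mathbb{E}}^u[|Y_T|] = o(T)$, divided by $T$, forces $J(x, u) = \lambda'$. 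The ``only if'' direction will be the main obstacle, since the sublinearity of $\tilde{\mathbb{E}}^u$ prevents a clean subtraction of the nonnegative gap $\int_0^T \Delta_r\, dr$. My plan is to use the representation $\tilde{\mathbb{E}}^u[\cdot] = \sup_{P \in \mathcal{P}^u} E_P[\cdot]$: for each $P \in \mathcal{P}^u$ the integral $\int Z\, dB^u$ is a $P$-martingale and $K_T \leq 0$, so the BSDE yields $E_P[\int_0^T \kappa\, dr] \leq Y_0 + \lambda' T + E_P[|Y_T|] + E_P[\int_0^T \Delta\, dr]$. Taking sup over $P$ and dividing by $T$ produces $J(x, u) \leq \lambda' + \limsup_T \tilde{\mathbb{E}}^u[\int_0^T \Delta_r\, dr]/T$; the equality $J(x,u)=\lambda'$ thus forces this extra slack to vanish, and a careful exploitation of $\Delta\geq 0$ together with the representation then pins down $\Delta_r = 0$ for a.e.\ $r$ q.s.
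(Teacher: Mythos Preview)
Your argument for $J(x,u)\ge\lambda'=\lambda$ and for the ``if'' direction of the equivalence follows the paper's line almost verbatim: rewrite the $G$-EBSDE under $\tilde{\mathbb{E}}^u$ via Girsanov, use that $K$ remains a $G$-martingale under $\tilde{\mathbb{E}}^u$ (this is exactly what the paper extracts from Remark~5.3 of \cite{HJPS1} as $\tilde{\mathbb{E}}^u[K_T]=0$), invoke the Hamiltonian inequality $f\le\kappa+Rz$, and bound $\tilde{\mathbb{E}}^u[|Y_T|]$ uniformly in $T$ through Lemma~\ref{HW2}. Only the packaging differs.

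The ``only if'' direction, however, contains a genuine gap. From your $P$-wise inequality you correctly obtain, after taking the supremum over $P$ and dividing by $T$,
\[
J(x,u)\;\le\;\lambda'+\limsup_{T\to\infty}\frac{1}{T}\,\tilde{\mathbb{E}}^u\!\Big[\int_0^T\Delta_r\,dr\Big],
\]
with $\Delta\ge0$. But this inequality points the wrong way for your conclusion: knowing $J(x,u)=\lambda'$ yields only $0\le\limsup(\cdots)$, which is automatic and says nothing about $\Delta$. To force $\Delta\equiv0$ you would need a lower bound of the form $J(x,u)\ge\lambda'+(\text{positive functional of }\Delta)$, and the sublinearity of $\tilde{\mathbb{E}}^u$ blocks precisely the subtraction that would produce it; your closing sentence (``a careful exploitation of $\Delta\ge0$ together with the representation then pins down $\Delta_r=0$'') does not supply the missing idea. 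For the record, the paper's own proof establishes only the ``if'' direction and is silent on the converse.
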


\begin{proof}
It is obvious that $\lambda^{\prime}=\lambda$. Since $(Y,Z,K,\lambda)$ is a solution of the ergodic $G$-BSDE \eqref{AppHMW}, we have
\begin{align*}
Y_{s}  =&Y_T+\int_{s}^{T}[f(X_{r}^{x},Z_{r})-\lambda]dr -\int_{s}^{T}Z_{r}dB_{r}-(K_{T}-K_{s})
\\ =& Y_T+\int_{s}^{T}[f(X_{r}^{x},Z_{r})-\lambda]dr -\int_{s}^{T}Z_{r}dB^u_{r}-\int_{s}^{T}Z_{r}R(u_r)dr-(K_{T}-K_{s}),
\label{AppHMW11}%
\end{align*}
Consequently,
\begin{align*}
\lambda T+ \mathbb{\tilde{E}}^u[ K_{T}]=&\mathbb{\tilde{E}}^u[Y_T-Y_0+\int_{0}^{T}[f(X_{r}^{x},Z_{r})-Z_{r}R(u_r)]dr].
\end{align*}
Note that $\mathbb{\tilde{E}}^u[ K_{T}]=0$, we obtain
\begin{align*}
\lambda &\leq \frac{1}{T}\mathbb{\tilde{E}}^u[Y_T-Y_0+\int_{0}^{T}\kappa(X^x_s,u_s)ds].
\end{align*}
From Remark 5.3 in \cite{HJPS1} and  Lemma \ref{HW2}, we have $\mathbb{\tilde{E}}^u[|Y_T|]\leq C(1+|x|).$ Consequently,
\[
\lim\limits_{T\rightarrow\infty}\frac{1}{T}\mathbb{\tilde{E}}^u[|Y_T-Y_0|]=0.
\]
Thus, we obtain that
\[
J(x,u)=\limsup\limits_{T\rightarrow\infty}\frac{1}{T}\mathbb{\tilde{E}}^{u}[\int^T_0\kappa(X^x_s,u_s)ds]\geq \lambda.
\]
In particular, if $f(X^x_t,Z_t)=\kappa(X^x_t,u_t)+R(u_t)Z_t$, we derive that
\[
\lambda =\limsup\limits_{T\rightarrow\infty}\frac{1}{T}\mathbb{\tilde{E}}^u[Y_T-Y_0+\int_{0}^{T}\kappa(X^x_s,u_s)ds]=J(x,u),
\]
which completes the proof.
\end{proof}
\begin{remark}{\upshape
From the above proof,   if $\limsup$ is changed into $\liminf$ in the equation \eqref{o1},
then the same results hold. Moreover, the optimal value is given by $\lambda$ in both cases.
}
\end{remark}

\appendix

\renewcommand\thesection{Appendix}

\section{ }

\renewcommand\thesection{A}
\subsection{The proof of Lemma \ref{my9}}
\begin{proof} We only prove the first inequality, since the second one can be obtained in a similar way.
Note that $(Y+\bar{K}, Z, K)$ can be seen as the solution to  the following linear $G$-BSDE:
\begin{equation*}
Y_{t}^{\prime}=\xi+\bar{K}_T+\int_{t}^{T}f^{\prime}_{s}ds+\int_{t}^{T}g^{\prime}_{s}d\langle B\rangle_{s}-\int%
_{t}^{T}Z_{s}^{\prime}dB_{s}-(K^{\prime}_{T}-K^{\prime}_{t})
\end{equation*}
with\[
f^{\prime}_s=a_{s}Y_{s}^{\prime}+b_{s}Z_{s}^{\prime}+m_{s}-a_s\bar{K}_s, \ \ g_{s}^{\prime}=c_{s}Y_{s}^{\prime}+d_{s}Z_{s}^{\prime}+n_{s}-c_s\bar{K}_s.
\]
Using Lemma \ref{the5.2}, we conclude that
\begin{align*}
Y_{t}+\bar{K}_t=&(X_{t})^{-1}\mathbb{\hat{E}}_{t}^{\tilde{G}}[X_{T}(\xi+\bar{K}_T)+\int_{t}^{T}%
(m_{s}-a_s\bar{K}_s)X_{s}ds+\int_{t}^{T}(n_{s}-c_s\bar{K}_s)X_{s}d\langle B\rangle_{s}]\\
\leq & (X_{t})^{-1}\mathbb{\hat{E}}_{t}^{\tilde{G}}[X_{T}(\xi+\int_{t}^{T}%
m_{s}X_{s}ds+\int_{t}^{T}n_{s}X_{s}d\langle B\rangle_{s}]\\
& \ \ \ +(X_{t})^{-1}\mathbb{\hat{E}}_{t}^{\tilde{G}}[X_{T}\bar{K}_{T}-\int_{t}^{T}a_{s}\bar{K}_{s}X_{s}%
ds-\int_{t}^{T}c_{s}\bar{K}_{s}X_{s}d\langle
B\rangle_{s}],
\end{align*}
where $X$ is given by \eqref{LSDE2}.
 Then it follows from Lemma \ref{the5.2}
that
\begin{align}\label{my10}
Y_{t}\leq (X_{t})^{-1}\mathbb{\hat{E}}_{t}^{\tilde{G}}[X_{T}\xi+\int_{t}^{T}%
m_{s}X_{s}ds+\int_{t}^{T}n_{s}X_{s}d\langle B\rangle_{s}].
\end{align}

Note that $a_s+\overline{\sigma}^2c_s\leq -\rho_3$ and  $\exp(-\int_{0}^{t}b_{s}d_{s}ds)\mathcal{E}_{t}^{B}\mathcal{E}_{t}^{\tilde{B}}$ is a $\tilde{G}$-martingale, we conclude that
\[
(X_{t})^{-1}\mathbb{\hat{E}}^{\tilde{G}}_{t}[X_{T}]\leq \exp(-\rho_3(T-t)),
\]
which together with inequality \eqref{my10} imply that
\[
Y_t\leq  \rho_1\exp(-\rho_3(T-t))+\frac{1+\overline{\sigma}^2}{\rho_3}(1-\exp(-\rho_3(T-t))\rho_2.
\]
The proof is  complete.
\end{proof}

\subsection{The proof of Lemma \ref{mw12}}
In order to prove Lemma \ref{mw12}, we consider the following type of $G$-FBSDEs with infinite horizon: for each $t\geq 0$ and $\xi\in L^4_G(\Omega_t)$,

\begin{align*} \label{App1m}
\begin{cases}
&X_{s}^{t,\xi}=\xi+\int^s_tb(X_{r}^{t,\xi})dr+\int^s_th_{ij}(X_{r}^{t,\xi})d\langle
B^i,B^j\rangle_{r}+\int^s_t\sigma(X_{r}^{t,\xi})dB_{r},\\&
Y_{s}^{t,\xi}   =Y_{T}^{t,\xi}+\int_{s}^{T}f(X_{r}^{t,\xi}%
,Y_{r}^{t,\xi},Z_{r}^{t,\xi})dr+\int_{s}^{T}g_{ij}(X_{r}^{t,\xi}%
,Y_{r}^{t,\xi},Z_{r}^{t,\xi})d\langle B^i,B^j\rangle_{r}\\
& \ \ \ \ \ \ \ \ \ \ \ \  -\int_{s}^{T}Z_{r}^{t,\xi}dB_{r}-(K_{T}^{t,\xi}-K_{s}^{t,\xi}).
\end{cases}
\end{align*}
 Using the same method as  in Lemma \ref{HM8}, we have the following.
\begin{lemma}\label{HM81}
Under assumptions \emph{(B1)-(B5)}, there exists a constant $M$ depending only on
$L,\alpha_1, \alpha_2,\eta$ and $G$ such that
\[
|Y^{t,\xi}_t-Y^{t,\xi^{\prime}}_t|\leq M|\xi-\xi^{\prime}|.
\]
\end{lemma}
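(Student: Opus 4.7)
\textbf{Proof proposal for Lemma \ref{HM81}.} The plan is to run essentially the same argument as Lemma \ref{HM8}, with the deterministic initial datum $(0,x)$ replaced by the random datum $(t,\xi)$ with $\xi\in L^4_G(\Omega_t)$. For clarity I would first treat $n=d=1$; the higher-dimensional case is identical. Set $\tilde{Y}=Y^{t,\xi}-Y^{t,\xi'}$, $\tilde{Z}=Z^{t,\xi}-Z^{t,\xi'}$, so that on every interval $[s,T]\subset[t,\infty)$ the pair $(\tilde{Y},\tilde{Z})$ satisfies a $G$-BSDE with drivers $\hat{f}_s=f(X^{t,\xi}_s,Y^{t,\xi}_s,Z^{t,\xi}_s)-f(X^{t,\xi'}_s,Y^{t,\xi'}_s,Z^{t,\xi'}_s)$ and $\hat{g}_s$ defined analogously. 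By Lemma \ref{HW4} applied to the $(y,z)$ variables, for each $\epsilon>0$ there exist bounded processes $a^\epsilon,b^\epsilon,c^\epsilon,d^\epsilon$ with $a^\epsilon_s+2G(c^\epsilon_s)\le -\mu$ such that
\begin{align*}
\hat{f}_s&=a^\epsilon_s\tilde{Y}_s+b^\epsilon_s\tilde{Z}_s+m^\epsilon_s+m_s,\\
\hat{g}_s&=c^\epsilon_s\tilde{Y}_s+d^\epsilon_s\tilde{Z}_s+n^\epsilon_s+n_s,
\end{align*}
where $|m^\epsilon_s|,|n^\epsilon_s|\le 2(L+\alpha_2)\epsilon$ and, by (B2), the forward-difference remainders satisfy $|m_s|+|n_s|\le 2L|X^{t,\xi}_s-X^{t,\xi'}_s|$.

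Next I would apply Lemma \ref{the5.2} on $[t,T]$ in the extended $\tilde{G}$-space to write
\[
\tilde{Y}_t\le \mathbb{\hat{E}}^{\tilde{G}}_t\Bigl[X^\epsilon_T\tilde{Y}_T+\int_t^T(m_s+m^\epsilon_s)X^\epsilon_s\,ds+\int_t^T(n_s+n^\epsilon_s)X^\epsilon_s\,d\langle B\rangle_s\Bigr],
\]
where $X^\epsilon$ solves the auxiliary $\tilde{G}$-SDE \eqref{HW5} starting from $1$ at time $t$. Exactly as in Lemma \ref{HM8}, the dissipative inequality $a^\epsilon_s+2G(c^\epsilon_s)\le -\mu$ yields the pathwise bound $|X^\epsilon_s|\le \exp(-\mu(s-t))\,\tilde{X}^\epsilon_s$, where $\tilde{X}^\epsilon$ is the pure-noise $\tilde{G}$-martingale driven by $b^\epsilon,d^\epsilon$. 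Combining this with the terminal bound $|\tilde{Y}_T|\le 2\alpha/\mu$ from Theorem \ref{HM3}, the term $\mathbb{\hat{E}}^{\tilde{G}}_t[X^\epsilon_T\tilde{Y}_T]$ is controlled by $(2\alpha/\mu)\exp(-\mu(T-t))$ and vanishes as $T\to\infty$, while the $m^\epsilon,n^\epsilon$ contributions are $O(\epsilon)$ and disappear as $\epsilon\to 0$.

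The main estimate is on the forward-difference term: I need the conditional analogue of Lemma \ref{HW2}(i),
\[
\mathbb{\hat{E}}^{\tilde{G}}_t\bigl[|X^{t,\xi}_s-X^{t,\xi'}_s|\,\tilde{X}^\epsilon_s\bigr]\le \exp\bigl(-(\eta-(1+\bar{\sigma}^2)\alpha_1\alpha_2)(s-t)\bigr)|\xi-\xi'|.
\]
Once this is available, inserting $|X^\epsilon_s|\le\exp(-\mu(s-t))\tilde{X}^\epsilon_s$ and integrating the product of two exponentials against $(1+\bar{\sigma}^2)L$ yields $\tilde{Y}_t\le \frac{(1+\bar{\sigma}^2)L}{\eta-(1+\bar{\sigma}^2)\alpha_1\alpha_2}|\xi-\xi'|$, and the reverse inequality follows by swapping $\xi$ and $\xi'$.

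The main obstacle is therefore the conditional version of Lemma \ref{HW2}. I expect no essential difficulty here: the proof of Lemma \ref{HW2}(i) is a pathwise $G$-It\^o expansion of $\exp(2C(r-t))|X^{t,\xi}_r-X^{t,\xi'}_r|^2\tilde{X}^\epsilon_r$ on $[t,s]$ in which the deterministic initial value $|x-x'|^2$ is simply replaced by the $\mathcal{F}_t$-measurable $|\xi-\xi'|^2$; the dissipative inequality (B4), the Young inequality estimate $2\langle X^{t,\xi}_r-X^{t,\xi'}_r,\sigma(X^{t,\xi}_r)-\sigma(X^{t,\xi'}_r)\rangle b^\epsilon_r\le (1+\bar{\sigma}^2)\alpha_1\alpha_2|X^{t,\xi}_r-X^{t,\xi'}_r|^2$, and the fact that the emerging stochastic integrals are $\tilde{G}$-martingales on $[t,\infty)$ allow one to take $\mathbb{\hat{E}}^{\tilde{G}}_t$ and recover exactly the same exponential decay rate. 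This is the only place where the integrability hypothesis $\xi\in L^4_G(\Omega_t)$ is used, to legitimize the localization/limit procedure in the conditional $G$-It\^o formula.
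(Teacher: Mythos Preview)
Your proposal is correct and follows exactly the route the paper intends: the paper's proof of Lemma \ref{HM81} consists of the single sentence ``Using the same method as in Lemma \ref{HM8}'', and what you have written is precisely that method transported from the deterministic initial datum $(0,x)$ to the random datum $(t,\xi)$, with the unconditional estimates replaced by their conditional analogues under $\mathbb{\hat{E}}^{\tilde{G}}_t$. Your identification of the conditional version of Lemma \ref{HW2}(i) as the only nontrivial adaptation, and your sketch of why the same $G$-It\^o computation goes through with $|x-x'|^2$ replaced by $|\xi-\xi'|^2$, are both on target.
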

Set
\[
u(t,x):=Y_{t}^{t,x},\ \ (t,x)\in\lbrack0,T]\times\mathbb{R}^{n}.
\]

\begin{lemma}
$u(t,x)$ is a deterministic
function of $(t,x)$. Moreover, $u(t,x)=u(x)$ for each $t\geq 0$.
\end{lemma}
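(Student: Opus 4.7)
The plan is to establish the two claims by combining the finite-horizon approximation already used in the proof of Theorem \ref{HM3} with a time-shift argument that exploits the autonomous nature of the coefficients.

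For the deterministic character of $u(t,x)$, I would approximate the infinite-horizon FBSDE by the finite-horizon truncation on $[t,n]$ with zero terminal data: denote by $(Y^{t,x,n},Z^{t,x,n},K^{t,x,n})\in\mathfrak{S}_G^2(t,n)$ its unique solution. The Markovian Feynman--Kac formula for $G$-BSDEs in \cite{HJPS1} yields a deterministic function $u^n(t,x)$ such that $Y_t^{t,x,n}=u^n(t,x)$. On the other hand, the Cauchy argument already carried out in the existence part of Theorem \ref{HM3} (adapted so that the initial time $t$ replaces $0$, which is immediate because the coefficients are autonomous and the a priori bound $|Y^{t,x,n}|\le L_2/\mu$ is uniform in $n$) shows that $Y_t^{t,x,n}\to Y_t^{t,x}$ in $L_G^2$ as $n\to\infty$. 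A quasi-sure limit of deterministic constants is deterministic, so $Y_t^{t,x}=u(t,x)$ is a real number.

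For the identity $u(t,x)=u(x)$, I would invoke a time shift. Fix $t\ge 0$ and set $\tilde B_s:=B_{s+t}-B_t$, $s\ge 0$; by stationarity of the increments of $G$-Brownian motion, $\tilde B$ is again a $G$-Brownian motion and $\langle\tilde B^i,\tilde B^j\rangle_s=\langle B^i,B^j\rangle_{s+t}-\langle B^i,B^j\rangle_t$. Define the shifted processes
\[
\tilde X_s:=X^{t,x}_{s+t},\quad \tilde Y_s:=Y^{t,x}_{s+t},\quad \tilde Z_s:=Z^{t,x}_{s+t},\quad \tilde K_s:=K^{t,x}_{s+t}-K^{t,x}_t.
\]
Because $b,h_{ij},\sigma,f,g_{ij}$ are time-independent, a direct change of variable $r\mapsto r-t$ in the forward and backward equations shows that $(\tilde X,\tilde Y,\tilde Z,\tilde K)$ solves the FBSDE \eqref{App1} on $[0,\infty)$ with initial condition $\tilde X_0=x$, driven by $\tilde B$, and with $\tilde Y$ bounded by $L_2/\mu$. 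The uniqueness statement of Theorem \ref{HM3}, applied in the $G$-expectation space generated by $\tilde B$ (which is canonically isomorphic to the original one), then forces $\tilde Y_0=Y^{x}_0=u(x)$. Combining the two steps gives $u(t,x)=Y^{t,x}_t=\tilde Y_0=u(x)$.

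The main obstacle I anticipate is justifying the shift step rigorously within the $G$-framework: one must check that $\tilde K$ remains a decreasing $G$-martingale, that the shifted processes still lie in the correct spaces $\mathfrak{S}_G^2(0,\infty)$, and that uniqueness for bounded-$Y$ solutions transfers across the shifted $G$-expectation. Once that is in place, both conclusions follow, and together they identify the appendix function $u(t,x)$ with the section-4 function $u(x)$, which is exactly what Lemma \ref{mw12} (via the standard Markovian identification $Y_t^x=u(t,X_t^x)=u(X_t^x)$) requires.
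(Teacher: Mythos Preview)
Your plan is correct and uses the same two ingredients as the paper---finite-horizon approximation and the time-shift invariance coming from $(B_{t+s}-B_t)_{s\ge0}$ being again a $G$-Brownian motion---but you deploy them in a different order. The paper performs the shift at the \emph{finite-horizon} level: it compares the truncated BSDE on $[0,n]$ (giving $Y^{n,x}_0$) with the truncated BSDE on $[t,n+t]$ (giving $Y^{n,t,x}_t$), obtains $Y^{n,x}_0=Y^{n,t,x}_t$ directly from the fact that the two finite-horizon systems are related by the shift, and only then passes to the limit $n\to\infty$ to conclude $u(x)=u(t,x)$. This sidesteps precisely the obstacle you flag: at finite horizon there is no need to verify that a shifted $K$ remains a decreasing $G$-martingale in the right space or to transfer the infinite-horizon uniqueness across isomorphic $G$-expectation spaces, because the equality of deterministic numbers $Y^{n,x}_0=Y^{n,t,x}_t$ follows immediately from the distributional identity of the two finite-horizon problems. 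Your route is sound but obliges you to carry out the extra checks you anticipate; the paper's order of operations makes them unnecessary.
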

\begin{proof}
Denote by $(Y^{n,x} , Z^{n,x},K^{n,x})$ the unique solution of the
following $G$-BSDE in $[0,n]$:
\begin{align*}
Y^{n,x}_s=\int^n_sf(X^x_r,Y^{n,x}_r,Z^{n,x}_r)dr+\int^n_sg_{ij}(X^x_r,Y^{n,x}_r,Z^{n,x}_r)d\langle B^i,B^j\rangle_r-\int^n_sZ^{n,x}_rdB_r-(K_n^{n,x}-K^{n,x}_s),
\end{align*}
and $(Y^{n,t,x} , Z^{n,t,x},K^{n,t,x})$ the unique solution of the
following $G$-BSDE in $[t,n+t]$:
\begin{align*}
Y^{n,t,x}_s=&\int^{n+t}_sf(X^{t,x}_r,Y^{n,t,x}_r,Z^{n,t,x}_r)dr+\int^{n+t}_sg_{ij}(X^{t,x}_r,Y^{n,t,x}_r,Z^{n,t,x}_r)d\langle B^i,B^j\rangle_r-\int^{n+t}_sZ^{n,t,x}_rdB_r\\&\ -(K_{n+t}^{n,t,x}-K^{n,t,x}_s).
\end{align*}
By the proof of Theorem \ref{HM3}, we get
$\lim\limits_{n\rightarrow\infty}Y^{n,x}_0=u(x)$ and $\lim\limits_{n\rightarrow\infty}Y^{n,t,x}_t=u(t,x)$.
Since $(B_{t+s}-B_t)_{s\geq 0}$ is also  a $G$-Brownian motion, we have
$Y^{n,x}_0=Y^{n,t,x}_t.$
Thus $u(x)=u(t,x)$ and this ends the proof.
\end{proof}

\begin{lemma}
\label{theA.4} For each $\xi\in L_{G}^{4}(\Omega_{t})$, we
have%
\[
u(\xi)=Y_{t}^{t,\xi}.
\]
\end{lemma}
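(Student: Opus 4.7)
The plan is to prove the identity $u(\xi)=Y_{t}^{t,\xi}$ first for simple $\mathcal{F}_t$-measurable random variables, and then extend to general $\xi\in L_{G}^{4}(\Omega_{t})$ by the Lipschitz continuity provided by Lemma \ref{HM81} and the corresponding Lipschitz property of $u$.

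First I would treat the case of a simple random variable of the form $\xi=\sum_{i=1}^{N}x_{i}\mathbf{1}_{A_{i}}$, where $\{A_{i}\}_{i=1}^{N}$ is a finite $\mathcal{F}_{t}$-measurable partition of $\Omega$ and $x_{i}\in\mathbb{R}^{n}$. Since the forward $G$-SDE for $X^{t,\xi}$ starts at time $t$ and is driven only by the increments of $B$ after time $t$, the strong solution admits the pathwise decomposition $X_{s}^{t,\xi}=\sum_{i}\mathbf{1}_{A_{i}}X_{s}^{t,x_{i}}$ for all $s\geq t$, q.s.; the same localization property then transfers to the $G$-BSDE, giving $(Y_{s}^{t,\xi},Z_{s}^{t,\xi},K_{s}^{t,\xi}-K_{t}^{t,\xi})=\sum_{i}\mathbf{1}_{A_{i}}(Y_{s}^{t,x_{i}},Z_{s}^{t,x_{i}},K_{s}^{t,x_{i}}-K_{t}^{t,x_{i}})$. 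Evaluating at $s=t$ and invoking the previous lemma (which identifies $Y_{t}^{t,x_{i}}=u(t,x_{i})=u(x_{i})$), I obtain $Y_{t}^{t,\xi}=\sum_{i}\mathbf{1}_{A_{i}}u(x_{i})=u(\xi)$, using that $u$ is a deterministic function.

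Next I would extend to arbitrary $\xi\in L_{G}^{4}(\Omega_{t})$ by density. Since $L_{ip}(\Omega_{t})$ is dense in $L_{G}^{4}(\Omega_{t})$, and any $\varphi(B_{t_{1}},\dots,B_{t_{n}})\in L_{ip}(\Omega_{t})$ can be approximated uniformly by simple functions of the same random variables, I can find a sequence $\xi_{n}$ of $\mathcal{F}_{t}$-measurable simple random variables of the above form with $\hat{\mathbb{E}}[|\xi_{n}-\xi|^{4}]\to 0$. By the first step, $Y_{t}^{t,\xi_{n}}=u(\xi_{n})$ for every $n$. Applying Lemma \ref{HM81} gives
\[
|Y_{t}^{t,\xi_{n}}-Y_{t}^{t,\xi}|\leq M|\xi_{n}-\xi|,\qquad\text{q.s.},
\]
while the Lipschitz estimate on $u$ from Lemma \ref{HM8} yields $|u(\xi_{n})-u(\xi)|\leq M|\xi_{n}-\xi|$. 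Taking the $L_{G}^{1}$-limit on both sides of $Y_{t}^{t,\xi_{n}}=u(\xi_{n})$, q.s., produces the desired identity $Y_{t}^{t,\xi}=u(\xi)$, q.s..

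The main technical difficulty I anticipate is justifying the pathwise localization $X_{s}^{t,\xi}=\sum_{i}\mathbf{1}_{A_{i}}X_{s}^{t,x_{i}}$ (and the analogous identity for the BSDE triple) in the $G$-framework, since one cannot appeal to classical conditioning. This should be handled by observing that both sides of the claimed equality satisfy the same $G$-SDE (respectively $G$-BSDE) with the initial value $\xi$, after multiplying the finite-horizon approximating equations used in the existence proof of Theorem \ref{HM3} by $\mathbf{1}_{A_{i}}\in L_{ip}(\Omega_{t})$ and using uniqueness together with the fact that multiplication by a bounded $\mathcal{F}_{t}$-measurable random variable commutes with the stochastic integrals $\int_{t}^{\cdot}\cdot\,dB_{r}$ and the $d\langle B^{i},B^{j}\rangle_{r}$ integrals. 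Once this pathwise decomposition is established, the rest of the argument is a routine density extension.
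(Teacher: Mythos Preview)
Your approach is correct but takes a slightly longer route than the paper's. Both argue via approximation by simple random variables and the Lipschitz bound of Lemma \ref{HM81}, but the paper avoids the localization step you identify as the main technical difficulty.

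The paper first reduces to bounded $\xi\in L_G^4(\Omega_t)$ (so that a \emph{uniform} approximation $|\eta^\varepsilon-\xi|\le\varepsilon$ by a simple $\eta^\varepsilon=\sum_i x_i\mathbf{1}_{A_i}$ is available). Then, rather than showing $Y_t^{t,\eta^\varepsilon}=\sum_i\mathbf{1}_{A_i}Y_t^{t,x_i}$, it compares $Y_t^{t,\xi}$ directly with each deterministic-initial solution $Y_t^{t,x_i}$ on the set $A_i$: since $|\xi-x_i|\le\varepsilon$ on $A_i$, Lemma \ref{HM81} gives $|Y_t^{t,\xi}-Y_t^{t,x_i}|\le M\varepsilon$ there, hence
\[
\bigl|Y_t^{t,\xi}-u(\eta^\varepsilon)\bigr|=\sum_{i}\mathbf{1}_{A_i}\bigl|Y_t^{t,\xi}-Y_t^{t,x_i}\bigr|\le M\varepsilon,
\]
and combining with $|u(\xi)-u(\eta^\varepsilon)|\le M\varepsilon$ yields the result. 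This completely bypasses the need to justify the pathwise decomposition $X_s^{t,\xi}=\sum_i\mathbf{1}_{A_i}X_s^{t,x_i}$ and the analogous identity for the $G$-BSDE triple, which in the $G$-framework requires some care (in particular for the decreasing $G$-martingale component $K$, and because indicator functions of general $\mathcal{B}(\Omega_t)$-sets are not in $L_{ip}(\Omega_t)$, so the commutation with $G$-stochastic integrals that you invoke is not automatic). Your density argument in $L_G^4$ is fine once the simple-function case is established; the paper's uniform approximation for bounded $\xi$ simply keeps everything pointwise and makes the limit step immediate.
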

\begin{proof}
By Lemma \ref{HM81}, we only need to prove Lemma \ref{theA.4} for
bounded $\xi\in L_{G}^{4}(\Omega_{t})$. Thus for each
$\varepsilon>0$, we can choose a simple function
$
\eta^{\varepsilon}=\sum_{i=1}^{N}x_{i}\mathbf{1}_{A_{i}},
$
where $(A_{i})_{i=1}^{N}$ is a $\mathcal{B}(\Omega_{t})$-partition and
$x_{i}\in\mathbb{R}^{n}$, such that $|\eta^{\varepsilon}-\xi|\leq\varepsilon$.
It follows from Lemma \ref{HM81} that%
\begin{align*}
|Y_{t}^{t,\xi}-u(\eta^{\varepsilon})|    =|Y_{t}^{t,\xi}-\sum_{i=1}%
^{N}u(x_{i})\mathbf{1}_{A_{i}}|
 =|Y_{t}^{t,\xi}-\sum_{i=1}^{N}Y_{t}^{t,x_{i}}\mathbf{1}_{A_{i}}| =\sum_{i=1}^{N}|Y_{t}^{t,\xi}-Y_{t}^{t,x_{i}}|\mathbf{1}_{A_{i}}
\leq M\varepsilon.
\end{align*}
 Noting that $
|u(\xi)-u(\eta^{\varepsilon})|\leq M\varepsilon,
$
we get $|Y_{t}^{t,\xi}-u(\xi)|\leq2M\varepsilon$. Since
$\varepsilon$ can be arbitrarily small, we obtain $Y_{t}^{t,\xi}=u(\xi)$.
\end{proof}

\begin{proof}[The proof of Lemma \ref{mw12}]
It is easy to check that $X^{t,X^x_t}_s=X^x_s$ for $s\geq t$. Then by the uniqueness of $G$-BSDE \eqref{App1}, we obtain $Y^{t,X^x_t}_t=Y^x_t,$
 which yields the desired result by applying Lemma \ref{theA.4}.
\end{proof}

\subsection{Uniqueness of viscosity solution to fully nonlinear elliptic PDEs}
\begin{theorem}\label{my1}
Under assumptions \emph{(B1)-(B5)}, if $\tilde{u}(x)$ is a bounded continuous viscosity solution to equation \eqref{feynman}, then
\[
u=\tilde{u}.
\]
\end{theorem}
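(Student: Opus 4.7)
The plan is to reduce the elliptic uniqueness problem to the finite-horizon nonlinear Feynman-Kac formula already established in \cite{HJPS1}, and then exploit the strict monotonicity in the $y$-variable (B3) via the exponential-decay argument already developed in the uniqueness half of Theorem \ref{HM3}.

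First, I would observe that any bounded continuous viscosity solution $\tilde u$ of the elliptic equation \eqref{feynman}, viewed as the time-independent function $\phi(t,x):=\tilde u(x)$, is automatically a viscosity solution on $[0,T]\times\mathbb{R}^n$ of the parabolic Cauchy problem
\[
\partial_t\phi+G(H(D_x^2\phi,D_x\phi,\phi,x))+\langle b(x),D_x\phi\rangle+f(x,\phi,\langle\sigma_1(x),D_x\phi\rangle,\ldots,\langle\sigma_d(x),D_x\phi\rangle)=0,\quad \phi(T,\cdot)=\tilde u,
\]
simply because $\partial_t\phi\equiv 0$. Since $\tilde u$ is bounded continuous, $\tilde u(X_T^x)\in L_G^{2+\beta}(\Omega_T)$ for every $\beta>0$, so the finite-horizon $G$-BSDE with terminal $\tilde u(X_T^x)$ and driver $(f,g_{ij})$ along $X^x$ has a unique solution $(\tilde Y^{x,T},\tilde Z^{x,T},\tilde K^{x,T})\in\mathfrak S_G^2(0,T)$ by Theorem \ref{the4.1}. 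Invoking the nonlinear Feynman-Kac formula of \cite{HJPS1}, together with the uniqueness of viscosity solutions for the parabolic Cauchy problem proved there, I obtain the identification $\tilde u(x)=\tilde Y_0^{x,T}$.

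Second, by Lemma \ref{mw12}, the solution of the infinite-horizon FBSDE \eqref{App1} satisfies $Y_t^x=u(X_t^x)$ for every $t\ge 0$, so its restriction to $[0,T]$ is a solution of exactly the same finite-horizon $G$-BSDE but with terminal $u(X_T^x)$ in place of $\tilde u(X_T^x)$; in particular $u(x)=Y_0^x$. Setting $\hat Y:=\tilde Y^{x,T}-Y^x$ and $\hat Z:=\tilde Z^{x,T}-Z^x$, boundedness of both $\tilde u$ and $u$ yields a constant $C_0$ independent of $T$ with $|\hat Y_T|=|\tilde u(X_T^x)-u(X_T^x)|\le C_0$ quasi-surely. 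I would then apply the linearisation of Lemma \ref{HW4} to the differences of $f$ and $g_{ij}$, introduce the auxiliary $\tilde G$-SDE \eqref{LSDE2}, and use the explicit representation in Lemma \ref{the5.2} together with the monotonicity bound $(X_t^\varepsilon)^{-1}\hat{\mathbb E}_t^{\tilde G}[X_T^\varepsilon]\le\exp(-\mu(T-t))$ derived under (B3) in the proof of Theorem \ref{HM3}. The very same computation gives
\[
|\tilde u(x)-u(x)|=|\hat Y_0|\le C_0\exp(-\mu T),
\]
and letting $T\to\infty$ yields $\tilde u=u$.

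The main obstacle I anticipate is the first step: one must verify that the nonlinear Feynman-Kac formula of \cite{HJPS1} remains valid in the present unbounded-domain set-up, so that the viscosity solution $\tilde u$ can indeed be identified with the BSDE value $\tilde Y_0^{x,T}$. This rests on (i) uniqueness of viscosity solutions for the associated parabolic Cauchy problem with merely bounded continuous terminal datum, and (ii) the integrability $\tilde u(X_T^x)\in L_G^{2+\beta}(\Omega_T)$, which is immediate from boundedness of $\tilde u$ and the moment estimates for $X^x$. Once this probabilistic identification is secured, the remainder is essentially a replay of the contraction-in-$T$ estimate that drove the uniqueness half of Theorem \ref{HM3}, with the roles of the two candidate solutions played by the time sections $\tilde u(X_\cdot^x)$ and $u(X_\cdot^x)$ along the common forward trajectory.
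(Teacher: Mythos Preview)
Your overall strategy coincides with the paper's: show that the time-independent function $\tilde u$ solves the parabolic Cauchy problem on $[0,T]$ with terminal datum $\tilde u$, identify it with the finite-horizon $G$-BSDE value, and then use the strict monotonicity (B3) to conclude. The paper packages the last step slightly differently---it patches the finite-horizon BSDEs together into a bona fide infinite-horizon solution $(\tilde u(X^x_\cdot),Z^x,K^x)$ and invokes the uniqueness statement of Theorem \ref{HM3} directly---but this is equivalent to your explicit $e^{-\mu T}$ contraction.

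The issue you flag as the ``main obstacle'' is exactly where the real work lies, and you have not resolved it. The Feynman-Kac formula in \cite{HJPS1} is proved only for Lipschitz terminal data, whereas $\tilde u$ is merely bounded continuous; in particular, the value function $\bar u(t,x):=\tilde Y_t^{t,T,x}$ is not \emph{a priori} continuous, so one cannot yet apply parabolic viscosity uniqueness to identify $\bar u$ with $\tilde u$. The paper fills this gap in three steps: (i) Lemma \ref{my4} shows $(t,x)\mapsto\hat{\mathbb E}[\phi(X_t^x)]$ is continuous for bounded continuous $\phi$; (ii) Lemma \ref{my7} approximates $\phi=\tilde u$ by uniformly bounded Lipschitz $\phi^m$ and proves the locally uniform convergence $\bar u^m(t_m,x_m)\to\bar u(t,x)$; (iii) Lemma \ref{my8} combines this with the stability of viscosity solutions (Proposition 4.3 in \cite{CMI}) and parabolic uniqueness (Theorem 6.1 in \cite{BJ}) to deduce that $\bar u$ is the viscosity solution, hence equals $\tilde u$. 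You should incorporate this approximation argument; without it, the identification $\tilde u(x)=\tilde Y_0^{x,T}$ is unjustified.
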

In order to prove Theorem \ref{my1}, we need the following lemmas.
\begin{lemma}\label{my4}
For each bounded and continuous function $\phi\in C_{b}(\mathbb{R}^n)$, $\mathbb{\hat{E}}[\phi(X^x_t)]$ is a continuous function of $(t,x)$.
\end{lemma}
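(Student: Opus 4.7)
The plan is to reduce the claim to the well-understood Lipschitz case via an approximation argument. More precisely, I would first verify joint continuity of $(t,x)\mapsto \hat{\mathbb{E}}[\phi(X^x_t)]$ when $\phi$ is bounded Lipschitz, and then extend to arbitrary $\phi\in C_b(\mathbb{R}^n)$ by approximating $\phi$ uniformly on large balls by Lipschitz functions and controlling the tails through the second-moment estimate for the $G$-SDE.

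For the Lipschitz case, I would invoke the standard continuous-dependence estimates available in the $G$-SDE theory (see \cite{HJPS1}): on any compact interval $[0,T]$ there is a constant $C=C(T)$ with
\[
\hat{\mathbb{E}}\!\left[\sup_{s\in[0,T]}|X^x_s-X^{x'}_s|^2\right]\leq C|x-x'|^2,\qquad \hat{\mathbb{E}}\!\left[|X^x_t-X^x_{t'}|^2\right]\leq C|t-t'|,
\]
together with $\hat{\mathbb{E}}[\sup_{s\in[0,T]}|X^x_s|^2]\leq C(1+|x|^2)$. If $|\phi(y)-\phi(y')|\leq L|y-y'|$, combining these two inequalities via the triangle inequality and the sublinearity of $\hat{\mathbb{E}}$ yields
\[
\bigl|\hat{\mathbb{E}}[\phi(X^x_t)]-\hat{\mathbb{E}}[\phi(X^{x'}_{t'})]\bigr|\leq L\,\hat{\mathbb{E}}[|X^x_t-X^{x'}_{t'}|]\leq L\sqrt{C(|x-x'|^2+|t-t'|)},
\]
which gives joint continuity (in fact Hölder-$\tfrac12$ continuity in $t$ and Lipschitz in $x$) on compacts.

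To pass to general $\phi\in C_b(\mathbb{R}^n)$ with $\|\phi\|_\infty\leq K$, I would fix $(t_0,x_0)$ and $\varepsilon>0$, choose $R$ large, and apply a McShane--Whitney-type extension of $\phi|_{\bar B_R}$ to obtain $\phi_\varepsilon\in C_{b,\mathrm{Lip}}(\mathbb{R}^n)$ with $\|\phi_\varepsilon\|_\infty\leq K$ and $\sup_{\bar B_R}|\phi-\phi_\varepsilon|\leq\varepsilon$. Then for $(t,x)$ in a neighbourhood of $(t_0,x_0)$, Chebyshev's inequality together with the moment bound gives
\[
\hat{\mathbb{E}}\bigl[|\phi(X^x_t)-\phi_\varepsilon(X^x_t)|\bigr]\leq \varepsilon+2K\,\frac{\hat{\mathbb{E}}[|X^x_t|^2]}{R^2}\leq \varepsilon+\frac{2KC(1+|x|^2)}{R^2},
\]
which is $\leq 2\varepsilon$ for $R$ large enough (uniformly in the chosen neighbourhood). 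Inserting the Lipschitz approximation in a triangle inequality and applying the result of the previous paragraph to $\phi_\varepsilon$, the joint continuity of $(t,x)\mapsto\hat{\mathbb{E}}[\phi(X^x_t)]$ at $(t_0,x_0)$ follows.

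The main technical point I expect to wrestle with is ensuring that the a priori $L^2$-type moment bounds and continuous-dependence estimates for the $G$-SDE are available in the precise form stated above, so that the Chebyshev tail control is uniform in $(t,x)$ on compact neighbourhoods. These estimates are routine within the $G$-framework (they follow from It\^o's formula and Gronwall under assumptions (B1)--(B2)), so there is no genuine obstacle beyond bookkeeping; everything else is a direct reduction to the Lipschitz case.
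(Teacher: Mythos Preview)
Your proposal is correct and essentially the same as the paper's proof: both combine the continuous-dependence estimate $\hat{\mathbb{E}}[|X^x_t-X^{x'}_{t'}|]\leq C_T(|t-t'|^{1/2}+|x-x'|)$ from \cite{HJPS1} with a Chebyshev tail bound based on the moment estimate for $X^x_t$; the paper merely uses the uniform continuity of $\phi$ on $\{|y|\leq N\}$ directly instead of first reducing to a Lipschitz approximant, which is only a cosmetic difference. One small wording point: McShane--Whitney extends an already Lipschitz function, so you should first uniformly approximate $\phi|_{\bar B_R}$ by a Lipschitz function (immediate from uniform continuity on the compact ball, e.g.\ via inf-convolution) and then extend.
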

\begin{proof}
Assume $\phi$ is bounded by $M>0$.
For each given $N>0$ and $T>0$, for any $t,t^{\prime}< T$, $x,x^{\prime}\in\mathbb{R}^n$, we have
\begin{align*}
|\mathbb{\hat{E}}[\phi(X^x_t)]-\mathbb{\hat{E}}[\phi(X^{x^{\prime}}_{t^{\prime}})]|&\leq \mathbb{\hat{E}}[|\phi(X^x_t)-\phi(X^{x^{\prime}}_{t^{\prime}})|]\\& \leq
\mathbb{\hat{E}}[|\phi(X^x_t)-\phi(X^{x^{\prime}}_{t^{\prime}})|\mathbf{1}_{\{|X^x_t|\leq N\}\cap \{|X^{x^{\prime}}_{t^{\prime}}|\leq N\}}]\\ & \  +\mathbb{\hat{E}}[|\phi(X^x_t)-\phi(X^{x^{\prime}}_{t^{\prime}})|(\mathbf{1}_{\{|X^x_t|\geq N\}}+\mathbf{1}_{\{|X^{x^{\prime}}_{t^{\prime}}|\geq N\}})]\\&
\leq \mathbb{\hat{E}}[|\phi(X^x_t)-\phi(X^{x^{\prime}}_{t^{\prime}})|\mathbf{1}_{\{|X^x_t|\leq N\}\cap \{|X^{x^{\prime}}_{t^{\prime}}|\leq N\}}]+\frac{2M}{N}(\mathbb{\hat{E}}[|X^{x^{\prime}}_{t^{\prime}}|+|X^x_t|]).
\end{align*}
Note $\phi$ is uniformly continuous on $\{x: |x|\leq N\}$. Then for each given $\epsilon>0$, there is a constant $\rho>0$ such that
\[
|\phi(z)-\phi(z^{\prime})|\leq \frac{\epsilon}{2} \ \text{whenever $|z-z^{\prime}|<\rho$ and $|z|,|z^{\prime}|\leq N$}.
\]
From Proposition 4.1 in \cite{HJPS1}, we obtain
\[
\mathbb{\hat{E}}[|X^x_t-X^{x^{\prime}}_{t^{\prime}}|]\leq C_T(|t-{t^{\prime}}|^{\frac{1}{2}}+|x-x^{\prime}|),
\]
where $C_T$ depends on $L,\alpha_1$, $G$, $n$ and $T$. Then, by Chebyshev's inequality, there is $\delta>0$ such that
\[
c(|X^x_t-X^{x^{\prime}}_{t^{\prime}}|\geq \rho)<\frac{\epsilon}{4M}
\]
whenever $|x-x^{\prime}|\leq \delta$ and $|t-{t^{\prime}}|\leq \delta$.
Consequently,
\begin{align*}
|\mathbb{\hat{E}}[\phi(X^x_t)]-\mathbb{\hat{E}}[\phi(X^{x^{\prime}}_{t^{\prime}})]|&
\leq \mathbb{\hat{E}}[|\phi(X^x_t)-\phi(X^{x^{\prime}}_{t^{\prime}})|\mathbf{1}_{\{|X^x_t-X^{x^{\prime}}_{t^{\prime}}|<\rho\}\cap\{|X^x_t|\leq N\}\cap \{|X^{x^{\prime}}_{t^{\prime}}|\leq N\}}]\\ &\  +\mathbb{\hat{E}}[|\phi(X^x_t)-\phi(X^{x^{\prime}}_t)|\mathbf{1}_{\{|X^x_t-X^{x^{\prime}}_{t^{\prime}}|\geq \rho\}}]
+\frac{2M}{N}(\mathbb{\hat{E}}[|X^{x^{\prime}}_{t^{\prime}}|+|X^x_t|])\\&
\leq \epsilon+\frac{2M}{N}(\mathbb{\hat{E}}[|X^{x^{\prime}}_{t^{\prime}}|+|X^x_t|])
\end{align*}
whenever $|x-x^{\prime}|\leq \delta$ and $|t-{t^{\prime}}|\leq \delta$. Thus we get
\[
\limsup\limits_{(t^{\prime},x^{\prime})\rightarrow(t,x)}|\mathbb{\hat{E}}[\phi(X^x_t)]-\mathbb{\hat{E}}[\phi(X^{x^{\prime}}_{t^{\prime}})]| \leq \epsilon+\frac{2M}{N}(\mathbb{\hat{E}}[|X^{x^{\prime}}_{t^{\prime}}|+|X^x_t|]).
\]
The proof is complete by letting $\epsilon \downarrow 0$ and then $N\rightarrow\infty$.
\end{proof}

Now we consider the following type of $G$-BSDEs on $[0,T]$ with $T>0$: for each $t\in[0,T]$ and $x\in \mathbb{R}^n$,
\begin{align}
Y_{s}^{t,T,x}   =&\phi(X^{t,x}_T)+\int_{s}^{T}f(X_{r}^{t,x}%
,Y_{r}^{t,T,x},Z_{r}^{t,T,x})dr+\int_{s}^{T}g_{ij}(X_{r}^{t,x}%
,Y_{r}^{t,T,x},Z_{r}^{t,T,x})d\langle B^i,B^j\rangle_{r}\nonumber\\
& \ \ \ \ \ \ \ \ \ \ \ \  -\int_{s}^{T}Z_{r}^{t,T,x}dB_{r}-(K_{T}^{t,T,x}-K_{s}^{t,T,x}),\label{my2}
\end{align}
where $\phi$ is a continuous function bounded by $M>0$. In particular, denote $(Y^{T,x},Z^{T,x},K^{T,x})=(Y^{0,T,x},Z^{0,T,x},K^{0,T,x})$.
Then we denote $\bar{u}(t,x)=Y^{t,T,x}_t$.
Note that there exists a sequence  Lipschitz functions $\{\phi^m\}_{m=1}^{\infty}$ bounded by $M$ such that
\[
|\phi(x)-\phi^m(x)|\leq \frac{1}{m}\mathbf{1}_{\{|x|\leq m\}}+2M\mathbf{1}_{\{|x|>m\}}.
\]
Then let $(Y^{t,T,m,x},Z^{t,T,m,x},K^{t,T,m,x})$ be the unique $\mathfrak{S}_{G}^{2}(0,T)$-solution of $G$-FBSDEs \eqref{my2} with terminal condition
$Y^{t,T,m,x}_T=\phi^m(X^{t,x}_T)$ and denote $\bar{u}^m(t,x)=Y^{t,T,m,x}_t$.

\begin{lemma} [\cite{HJPS1}]\label{my3}
Under assumptions \emph{(B1)} and \emph{(B2)}, $\bar{u}^m(t,x)$ is the unique viscosity solution of the following fully nonlinear PDE with terminal condition $\bar{u}^m(T,x)=\phi^m(x)$:
\begin{equation}
\left\{
\begin{array}
[c]{l}%
\partial_{t}u+ G(H(D_{x}^{2}u,D_{x}u,u,x))+\langle
b(x),D_{x}u\rangle
 +f(x,u,D_{x}u\sigma(x))=0,\\
u(T,x)=\phi(x).
\end{array}
\right.  \label{myfeynman}%
\end{equation}
Moreover, $\bar{u}^m(t,X^x_t)=Y^{T,m,x}_t$ for each $t\in[0,T]$.
\end{lemma}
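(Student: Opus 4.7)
The plan is to follow the standard Peng-style argument adapted to $G$-BSDEs, essentially as carried out in \cite{HJPS1}. First I would verify that $\bar{u}^m$ is well-defined as a deterministic function with enough regularity to make sense of the viscosity inequalities. Arguing exactly as in Lemma \ref{theA.4}, $Y^{t,T,m,x}_t$ is indeed deterministic because $X^{t,x}$ is the Markovian forward flow started from a constant at time $t$ and $\phi^m$ is deterministic. Combining the a priori estimate of Theorem \ref{pro3.5} with the Lipschitz properties (B2) of $b$, $h_{ij}$, $\sigma$, $f$, $g_{ij}$ and the Lipschitz property of $\phi^m$, plus standard estimates for $G$-SDEs giving $\mathbb{\hat{E}}[\sup_{s\leq T}|X^{t,x}_s - X^{t',x'}_s|^2]\to 0$ as $(t',x')\to(t,x)$, one obtains continuity of $(t,x)\mapsto\bar{u}^m(t,x)$ and linear growth in $x$ (with constants possibly depending on $m$).

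Next I would establish the flow/dynamic programming identity
\[
\bar{u}^m(s, X^x_s) = Y^{T,m,x}_s \quad \text{for all } s\in[0,T],\ \text{q.s.,}
\]
by exploiting the Markov property $X^x_r = X^{s,X^x_s}_r$ for $r\geq s$ together with uniqueness of the $G$-BSDE \eqref{my2}. This is the infinite-horizon/finite-horizon analogue of Lemma \ref{theA.4} and only requires that the generator is deterministic in $\omega$ through $X$.

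For the viscosity property, let $\psi\in C^{1,2}([0,T]\times\mathbb{R}^n)$ touch $\bar{u}^m$ from above at an interior point $(t_0,x_0)$. Applying the $G$-It\^{o} formula to $\psi(r,X^{t_0,x_0}_r)$ on $[t_0,t_0+\delta]$ and comparing with the BSDE dynamics for $Y^{t_0,T,m,x_0}$, one writes
\[
\psi(t_0,x_0)-\bar{u}^m(t_0,x_0)=0\leq \psi(t_0+\delta,X^{t_0,x_0}_{t_0+\delta}) - Y^{t_0,T,m,x_0}_{t_0+\delta},
\]
and rearranges the resulting identity to isolate $-\partial_t\psi - G(H(D^2\psi,D\psi,\psi,x_0))-\langle b,D\psi\rangle - f(\cdots)$. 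The key step is to use the decreasing $G$-martingale property of $K^{t_0,T,m,x_0}$, namely $\mathbb{\hat{E}}_{t_0}[K^{t_0,T,m,x_0}_{t_0+\delta}-K^{t_0,T,m,x_0}_{t_0}]\leq 0$, and the sublinearity identity $2G(\eta_s)\,ds - \eta_s\,d\langle B^i,B^j\rangle_s$ having nonnegative $\mathbb{\hat{E}}$, to convert the path-dependent stochastic terms into the nonlinear $G$ operator. Dividing by $\delta$, taking $\mathbb{\hat{E}}_{t_0}$, and sending $\delta\downarrow 0$ yields the supersolution inequality at $(t_0,x_0)$; the subsolution direction is symmetric. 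The terminal condition is inherited from the BSDE.

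The main obstacle will be handling the decreasing $G$-martingale increment $K^{t_0,T,m,x_0}_{t_0+\delta}-K^{t_0,T,m,x_0}_{t_0}$, which has no classical BSDE analogue: one must carefully exploit the one-sided estimate on $K$ together with the $G$-sublinearity to get the correct direction of each inequality (this is exactly the delicate combinatorial bookkeeping done in \cite{HJPS1}). Uniqueness then follows from a standard comparison principle for fully nonlinear parabolic PDEs of HJB/Isaacs type (Crandall-Ishii-Lions doubling-of-variables), which applies here because $G$ is sublinear and Lipschitz on $\mathbb{S}_d$ and because $f,g_{ij}$ are Lipschitz in $(u,p)$; the Lipschitz bound on $\bar{u}^m$ in $x$ and boundedness of $\phi^m$ provide the growth control required by the comparison argument.
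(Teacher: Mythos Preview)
The paper does not give its own proof of this lemma: it is quoted directly from \cite{HJPS1}, and the present paper simply invokes it as a known result. Your sketch reproduces the argument of \cite{HJPS1} faithfully---deterministic value function via the Markovian flow, the dynamic programming identity $\bar u^m(s,X^x_s)=Y^{T,m,x}_s$ from uniqueness of the $G$-BSDE, the viscosity inequalities via $G$-It\^o's formula on a test function combined with the one-sided estimate on the decreasing $G$-martingale $K$, and uniqueness from a parabolic comparison principle (the paper cites \cite{BJ} for this)---so there is nothing to compare beyond noting that your outline matches the cited source.
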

\begin{lemma}\label{my7}
Assume \emph{(B1)} and \emph{(B2)} hold. Then we have
\begin{description}
\item[(1)] There exists a constant $C$ depending on $M$, $T$, $G$, $L$, $\alpha$ and $\alpha_2$ such that \[\|Y^{t,T,m,x}\|_{S_G^2(t,T)}+\|Z^{t,T,m,x}\|_{M_G^2(t,T)}+\|Y^{t,T,x}\|_{S_G^2(t,T)}+\|Z^{t,T,x}\|_{M_G^2(t,T)}\leq C, \ \forall x\in\mathbb{R}^n,m\geq 1;\]
\item[(2)] $\lim\limits_{m\rightarrow\infty}\mathbb{\hat{E}}[\sup\limits_{s\in[t,T]}|Y^{t,T,m,x}_s-Y^{t,T,x}_s|^2]=0$;
\item[(3)] $\bar{u}(t,x)$ is a bounded and continuous function;
\item[(4)] $\lim\limits_{m\rightarrow \infty}\bar{u}^m(t_m,x_m)=\bar{u}(t,x)$ for each given $(t_m,x_m)\in[0,T]\times\mathbb{R}^n$ with $(t_m,x_m)\rightarrow (t,x)$.
\end{description}
\end{lemma}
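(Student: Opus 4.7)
I plan to handle the four statements in the order (1), (2), (4), (3), since the continuity in (3) depends on the locally uniform convergence established in (4). The common engine is the $G$-BSDE a priori/stability estimate of Theorem \ref{pro3.5} applied on the finite interval $[t,T]$, together with the standard polynomial moment bounds for $X^{t,x}$ from the $G$-SDE theory.

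For (1), I apply Theorem \ref{pro3.5} to the $G$-BSDE \eqref{my2} using the trivial equation (zero data) as the second reference point. The terminal values $\phi^m(X^{t,x}_T)$ and $\phi(X^{t,x}_T)$ are both bounded by $M$; (B1) yields $|f(\cdot,0,0)|+2G(|g_{ij}(\cdot,0,0)|)\leq \alpha$; and (B2) supplies Lipschitz continuity of the generators in $(y,z)$ with constants $L$ and $\alpha_2$. Plugging these into the $S_G^2$ and $M_G^2$ estimates produces bounds depending only on $M,T,G,L,\alpha,\alpha_2$, uniformly in $(m,x)$.

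For (2), the two $G$-BSDEs share identical generators and differ only in their terminal data, so the stability part of Theorem \ref{pro3.5} collapses to
\[
\hat{\mathbb{E}}\Bigl[\sup_{s\in[t,T]}|Y^{t,T,m,x}_s-Y^{t,T,x}_s|^2\Bigr]\leq C\,\hat{\mathbb{E}}\Bigl[\sup_{s\in[t,T]}\hat{\mathbb{E}}_s[|\phi^m(X^{t,x}_T)-\phi(X^{t,x}_T)|^2]\Bigr].
\]
Using the pointwise bound $|\phi^m(y)-\phi(y)|\leq \tfrac{1}{m}\mathbf{1}_{\{|y|\leq m\}}+2M\mathbf{1}_{\{|y|>m\}}$, Chebyshev's inequality, and the standard moment estimate $\hat{\mathbb{E}}[|X^{t,x}_T|^p]\leq C_T(1+|x|^p)$ for some $p>2$, the right-hand side is dominated by $C/m^2+C'(1+|x|^p)/m^p$, which vanishes as $m\to\infty$.

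For (4), I need two ingredients. First, each $\bar{u}^m$ is continuous on $[0,T]\times\mathbb{R}^n$: this is immediate from Lemma \ref{my3}, or alternatively from the same stability estimate combined with the $L^2_G$-continuity of $(t,x)\mapsto X^{t,x}_T$ established in \cite{HJPS1} (the same ingredient used in the proof of Lemma \ref{my4}). Second, specializing the bound from (2) to $s=t$, where $Y^{t,T,m,x}_t-Y^{t,T,x}_t=\bar{u}^m(t,x)-\bar{u}(t,x)$ is a deterministic constant, I obtain
\[
|\bar{u}^m(t,x)-\bar{u}(t,x)|^2\leq \frac{C}{m^2}+\frac{C'(1+|x|^p)}{m^p},
\]
which is locally uniform in $(t,x)$. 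Hence $\bar{u}^m\to\bar{u}$ locally uniformly; $\bar{u}$ is then continuous as a locally uniform limit of continuous functions, and the triangle inequality $|\bar{u}^m(t_m,x_m)-\bar{u}(t,x)|\leq |\bar{u}^m(t_m,x_m)-\bar{u}(t_m,x_m)|+|\bar{u}(t_m,x_m)-\bar{u}(t,x)|$ delivers (4). Part (3) is then automatic: boundedness of $\bar{u}$ comes from (1) (since $Y^{t,T,x}_t$ is deterministic and controlled by the $S_G^2$-norm bound), and continuity was just established in (4). The only genuine technical obstacle is ensuring that the estimate in (2) is truly locally uniform in $(t,x)$, which rests on the polynomial $x$-dependence of the $G$-SDE moment bound and the tail estimate for $X^{t,x}_T$ via Chebyshev with a sufficiently high moment; once those standard $G$-calculus ingredients are in place, the rest of the argument is essentially routine bookkeeping.
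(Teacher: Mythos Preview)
Your argument is correct. For parts (1) and (2) it coincides with the paper's proof, up to a detail you skip: controlling $\hat{\mathbb{E}}\bigl[\sup_s\hat{\mathbb{E}}_s[|\phi^m(X^{t,x}_T)-\phi(X^{t,x}_T)|^2]\bigr]$ needs a Doob-type maximal inequality for $G$-expectations (Theorem~3.3 in \cite{Song11}), not merely the moment bound $\hat{\mathbb{E}}[|X^{t,x}_T|^p]\leq C_T(1+|x|^p)$. Once you write $\mathbf{1}_{\{|y|>m\}}\leq (|y|/m)^q$ and then apply that maximal inequality to $|X^{t,x}_T|^q$, your bound $C/m^2+C'(1+|x|^p)/m^p$ follows, uniformly in $t\in[0,T]$ and locally uniformly in $x$.

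For parts (3) and (4) you take a genuinely different route from the paper. The paper proves the continuity of $\bar u$ \emph{directly}: it derives the representation $\bar u(t,x)=\hat{\mathbb{E}}\bigl[\phi(X^{t,x}_T)+\int_t^T f(X_r^{t,x},Y_r^{t,T,x},Z_r^{t,T,x})\,dr\bigr]$ by taking $G$-expectation of the BSDE (using that $\int Z\,dB$ has no mean uncertainty and $\hat{\mathbb{E}}[K_T^{t,T,x}]=0$), and then combines Lemma~\ref{my4} with a stability estimate \eqref{my6} for $(Y^{t_m,T,x_m},Z^{t_m,T,x_m})$ in the starting point. Only afterwards does it deduce (4) by the same triangle inequality you use. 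Your approach instead imports the continuity of $\bar u^m$ from the Lipschitz Feynman--Kac theory of \cite{HJPS1} (implicit in Lemma~\ref{my3}) and upgrades the pointwise convergence of (2) to locally uniform convergence, yielding continuity of $\bar u$ and (4) in one stroke. This is shorter and avoids both the representation formula and Lemma~\ref{my4}; the paper's route, on the other hand, is more self-contained and yields the explicit formula for $\bar u$, which can be of independent interest.
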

\begin{proof}
Note that $\phi^m$ and $f(x,0,0),g_{ij}(x,0,0)$ are uniformly bounded. Applying Proposition 3.5 and Corollary 5.2 in \cite{HJPS}, we obtain (1).
By Theorem \ref{pro3.5} and Theorem 3.3 in \cite{Song11}, we can find a constant $\tilde{C}$  depending on $M$, $T$, $G$, $L$, $\alpha$ and $\alpha_2$ (may vary from line to line), such that,
\begin{align}
\lim\limits_{m\rightarrow\infty}\mathbb{\hat{E}}[\sup\limits_{s\in[t,T]}|Y^{t,T,m,x}_s-Y^{t,T,x}_s|^2]\leq &\lim\limits_{m\rightarrow\infty}\tilde{C} ((\mathbb{\hat{E}}[|\phi(X^{t,x}_T)-\phi^m(X^{t,x}_T)|^{3}])^{\frac{2}{3}}+\mathbb{\hat{E}}[|\phi(X^{t,x}_T)-\phi^m(X^{t,x}_T)|^{3}])\nonumber\\
\leq &\lim\limits_{m\rightarrow\infty}\tilde{C}( \frac{1}{m^2}+\frac{\mathbb{\hat{E}}[|X^{t,x}_T|^{3}]+(\mathbb{\hat{E}}[|X^{t,x}_T|^{3}])^{\frac{2}{3}}}{m^2})=0.\label{my5}
\end{align}
In particular, $\lim\limits_{m\rightarrow\infty}\bar{u}^m(t,x)=\bar{u}(t,x)$.

Now we prove $\lim\limits_{m\rightarrow \infty}\bar{u}(t_m,x_m)=\bar{u}(t,x)$ for each given $(t_m,x_m)\in[0,T]\times\mathbb{R}^n$ with $(t_m,x_m)\rightarrow (t,x)$.
Without loss of generality, we assume $t_m\leq t$ and $g_{ij}=0$. Using the method as in (2) and Lemma \ref{my4}, we can obtain
\begin{align}\label{my6}
\lim\limits_{m\rightarrow\infty}\mathbb{\hat{E}}[\sup\limits_{s\in[t,T]}|Y^{t_m,T,x_m}_s-Y^{t,T,x}_s|^2+\int^T_t|Z^{t_m,T,x_m}_s-Z^{t,T,x}_s|^2ds]=0. \end{align}
By equation \eqref{my2}, we have
\[\bar{u}(t,x)+(K_{T}^{t,T,x}-K_{t}^{t,T,x})=\phi(X^{t,x}_T)+\int_{t}^{T}f(X_{r}^{t,x}%
,Y_{r}^{t,T,x},Z_{r}^{t,T,x})dr-\int_{t}^{T}Z_{r}^{t,T,x}dB_{r}.\]
Taking expectation on both sides yields that
\[
\bar{u}(t,x)=\mathbb{\hat{E}}[\phi(X^{t,x}_T)+\int_{t}^{T}f(X_{r}^{t,x}%
,Y_{r}^{t,T,x},Z_{r}^{t,T,x})dr].
\]
Consequently,
\begin{align*}
|\bar{u}(t,x)-\bar{u}(t_m,x_m)|&
\leq \mathbb{\hat{E}}[|\phi(X^{t,x}_T)-\phi(X^{t_m,x_m}_T)|+\int_{{t_m}}^{{t}}|f(X_{r}^{t_m,x_m}%
,Y_{r}^{t_m,T,x_m},Z_{r}^{t_m,T,x_m})|\,dr\\
&\ \ \  +\int_{{t}}^{T}|f(X_{r}^{t,x}%
,Y_{r}^{t,T,x},Z_{r}^{t,T,x})-f(X_{r}^{t_m,x_m}%
,Y_{r}^{t_m,T,x_m},Z_{r}^{t_m,T,x_m})|\,dr]\\
& \leq  \mathbb{\hat{E}}[(t-t_m)^{\frac{1}{2}}(\int_{{t_m}}^{{t}}3(|f(X_r^{t_m,x_m},0,0)|^2+|LY_{r}^{t_m,T,x_m}|^2+|\alpha_2Z_{r}^{t_m,T,x_m}|^2)\,dr)^{\frac{1}{2}}\\
&\ \ \ +\int_{{t}}^{T}(L|X_{r}^{t,x}-X_{r}^{t_m,x_m}|+L
|Y_{r}^{t,T,x}-Y_{r}^{t_m,T,x_m}| +\alpha_2|Z_{r}^{t,T,x}-Z_{r}^{t_m,T,x_m}|)\,dr\\& \ \ \ +
|\phi(X^{t,x}_T)-\phi(X^{t_m,x_m}_T)|].
\end{align*}
By Lemma \ref{my4}, (1) and equation \eqref{my6}, we derive that
\[
\lim\limits_{m\rightarrow\infty}|\bar{u}(t,x)-\bar{u}(t_m,x_m)|=0,
\]
 and $u$ is a bounded continuous function.

 From (3), we get that
\begin{align*}\lim\limits_{m\rightarrow\infty}|\bar{u}^m(t_m,x_m)-\bar{u}(t,x)|\leq &\lim\limits_{m\rightarrow\infty}|\bar{u}^m(t_m,x_m)-\bar{u}(t_m,x_m)|+ \lim\limits_{m\rightarrow\infty}|\bar{u}(t_m,x_m)-\bar{u}(t,x)|\\=&\lim\limits_{m\rightarrow\infty}|\bar{u}^m(t_m,x_m)-\bar{u}(t_m,x_m)|.
\end{align*}
By equation \eqref{my5}, we obtain
\[
\lim\limits_{m\rightarrow\infty}|\bar{u}^m(t_m,x_m)-\bar{u}(t,x)|\leq\lim\limits_{m\rightarrow\infty}\tilde{C}( \frac{1}{m}+\frac{\mathbb{\hat{E}}[|X^{t_m,x_m}_T|^{3}]^{\frac{1}{2}}+\mathbb{\hat{E}}[|X^{t_m,x_m}_T|^{3}]^{\frac{1}{3}}}{m})=0.
\]
 The proof is complete.
\end{proof}

By Lemmas \ref{my3}, \ref{my7},   Theorem 6.1 in \cite{BJ} and Proposition 4.3 in \cite{CMI},
we have the following result, which is the nonlinear Feynman-Kac formula for parabolic PDE.

\begin{lemma} \label{my8}
Under assumptions \emph{(B1)} and \emph{(B2)}, $\bar{u}(t,x)$ is the unique viscosity solution of the  fully nonlinear PDE
\eqref{myfeynman} with terminal condition $\bar{u}(T,x)=\phi(x)$. In particular, $\bar{u}(t,X^x_t)=Y^{T,x}_t$
\end{lemma}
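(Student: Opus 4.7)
The plan is to obtain $\bar u$ as a stable limit of the approximating viscosity solutions $\bar u^m$ from Lemma \ref{my3} and then invoke a comparison principle for fully nonlinear parabolic PDEs (Theorem 6.1 in \cite{BJ}) to get uniqueness; the Feynman-Kac identity will then follow by passing to the limit in the analogous identity already available at level $m$.

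First, I would collect the input from Lemma \ref{my7}. Part (4) of that lemma gives exactly the locally (in fact globally) uniform convergence needed: for every $(t,x)\in[0,T]\times\mathbb{R}^n$ and every sequence $(t_m,x_m)\to(t,x)$ we have $\bar u^m(t_m,x_m)\to \bar u(t,x)$. Combined with the boundedness in part (1) and the continuity of $\bar u$ in part (3), this is precisely the hypothesis of the Barles--Perthame half-relaxed limits method (Proposition 4.3 in \cite{CMI}): since $\bar u^m$ is a viscosity solution of equation \eqref{myfeynman} with terminal value $\phi^m$ (Lemma \ref{my3}), and since $\phi^m\to\phi$ locally uniformly by construction, the upper and lower relaxed half-limits coincide with the continuous function $\bar u$, which must therefore be a viscosity solution of \eqref{myfeynman} with terminal condition $\bar u(T,x)=\phi(x)$.

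For uniqueness, I would appeal to the comparison principle of Theorem 6.1 in \cite{BJ} applied to the fully nonlinear parabolic operator
\[
F(t,x,r,p,A):=-G(H(A,p,r,x))-\langle b(x),p\rangle-f(x,r,\langle\sigma_1(x),p\rangle,\dots,\langle\sigma_d(x),p\rangle),
\]
whose structure conditions (monotonicity in $r$ is not needed thanks to a standard exponential change of variables $u\mapsto e^{-\lambda t}u$, and the modulus-of-continuity condition in $(x,p)$ follows from (B1)--(B2) and the Lipschitz continuity of $G$) are standard. Any two bounded continuous viscosity solutions sharing the same bounded continuous terminal data must coincide on $[0,T]\times\mathbb{R}^n$; since $\phi$ is bounded and continuous and $\bar u$ is bounded and continuous, this delivers uniqueness.

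Finally, the representation $\bar u(t,X_t^x)=Y_t^{T,x}$ is obtained by passing to the limit in the corresponding identity $\bar u^m(t,X_t^x)=Y_t^{T,m,x}$ from Lemma \ref{my3}. On the left-hand side, the uniform boundedness of $\bar u^m$ together with the locally uniform convergence $\bar u^m\to \bar u$ gives $\bar u^m(t,X_t^x)\to \bar u(t,X_t^x)$ quasi-surely along a subsequence, and in $L^2_G$ by dominated convergence for the $G$-expectation. On the right-hand side, Lemma \ref{my7}(2) yields $Y^{T,m,x}_t\to Y^{T,x}_t$ in $S^2_G$. Identifying the two limits gives the desired formula. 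The main technical obstacle is verifying that Theorem 6.1 in \cite{BJ} applies to the present $G$-nonlinearity with the $h_{ij}$ and $g_{ij}$ terms; this reduces to checking that the map $A\mapsto G(H(A,p,r,x))$ is degenerate elliptic and that the lower-order terms satisfy the standard Lipschitz/continuity assumptions, both of which follow from (B1)--(B2) and the non-degeneracy and Lipschitz properties of $G$.
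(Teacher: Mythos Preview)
Your proposal is correct and follows exactly the route the paper indicates: the paper's proof of Lemma \ref{my8} is the single sentence ``By Lemmas \ref{my3}, \ref{my7}, Theorem 6.1 in \cite{BJ} and Proposition 4.3 in \cite{CMI}'', and you have faithfully expanded these four ingredients (stability of viscosity solutions from \cite{CMI} via the locally uniform convergence of Lemma \ref{my7}(4), uniqueness from the comparison principle in \cite{BJ}, and passage to the limit in the identity of Lemma \ref{my3} using Lemma \ref{my7}(2)). The only cosmetic point is that your appeal to ``dominated convergence for the $G$-expectation'' on the left-hand side is unnecessary: since $\bar u^m(t,X^x_t)=Y^{T,m,x}_t\to Y^{T,x}_t$ in $S^2_G$ by Lemma \ref{my7}(2) and $\bar u^m(t,X^x_t)\to\bar u(t,X^x_t)$ q.s.\ by Lemma \ref{my7}(4), uniqueness of the limit already gives $\bar u(t,X^x_t)=Y^{T,x}_t$.
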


Now we give the proof of Theorem \ref{my1}.

\begin{proof}[The  proof of Theorem \ref{my1}]
For each $T>0$, by  the definition of viscosity solution, we obtain $\tilde{u}$ is the unique viscosity solution of the  fully nonlinear PDE
\eqref{myfeynman} with terminal condition $\phi(x)=\tilde{u}(x)$.
Then it follows Lemma \ref{my8}, $\tilde{u}(X^x_t)=Y^{T,x}_t$ for each $t\in[0,T]$, where
\begin{align}
Y_{s}^{T,x}   =&\tilde{u}(X^{x}_T)+\int_{s}^{T}f(X_{r}^{x}%
,Y_{r}^{T,x},Z_{r}^{t,T,x})dr+\int_{s}^{T}g_{ij}(X_{r}^{x}%
,Y_{r}^{T,x},Z_{r}^{T,x})d\langle B^i,B^j\rangle_{r}\nonumber\\
& \ \ \ \ \ \ \ \ \ \ \ \  -\int_{s}^{T}Z_{r}^{T,x}dB_{r}-(K_{T}^{T,x}-K_{s}^{T,x}).
\end{align}
By the uniqueness of solution to $G$-BSDE in finite horizon,
it is obvious $(Z_{r}^{x,T},K_{r}^{x,T})=(Z_{r}^{x,S},K_{r}^{x,S})$ for $S>T$.
Set $(Z^x_t,K^x_t)=(Z^{x,T}_t,K^{x,T}_t)$ for some $T\geq t$. Then $(\tilde{u}(X^x_{t}),Z^x_t,K^x_t)_{t\geq 0}$
satisfies equation \eqref{App1}. Applying Theorem \ref{HM3}, we obtain $\tilde{u}(X^x_{t})={u}(X^x_{t})$. In particular,
$\tilde{u}(x)=u(x)$, which is the desired result.
\end{proof}
\begin{remark}{
\upshape
In this section, we introduce a new  method to  prove the uniqueness of the
viscosity solutions to   elliptic PDEs in $\mathbb{R}^n$, which non-trivially generalize the
ones of \cite{PE} for fully nonlinear case. In particular, this method can be applied to deal with more general elliptic PDEs, for example,
the usual HJB equations.
}
\end{remark}

\end{document}